\def\PC{\mathcal{P}}
\def\MC{\mathcal{M}}
\def\LC{\mathcal{L}}
\def\KC{\mathcal{K}}
\def\XC{\mathcal{X}}
\def\FC{\mathcal{F}}
\def\E{\mathbf{E}}
\def\P{\mathbf{P}}
\def\R{\mathbf{R}}
\def\x{\mathbf{x}}
\def\y{\mathbf{y}}
\def\1{\mathbf{1}}
\def\sym{\rm{sym}}
\def\al{\alpha}
\def\be{\beta}
\def\pa{\partial}
\def\ep{\epsilon}
\def\de{\delta}
\def\ga{\gamma}
\newcommand{\la}{\lambda}
\newcommand{\om}{\omega}
\newcommand{\Ga}{\Gamma}
\newcommand{\Om}{\Omega}
\newcommand{\La}{\Lambda}
\def\B{\mathbf{B}}
\def\D{\mathbf{D}}
\def\E{\mathbb{E}}
\def\R{\mathbf{R}}
\def\M{\mathcal{M}}
\def\P{\mathcal{P}}
\def\U{\mathcal{U}}
\newtheorem{prop}{Proposition}[section]
\newtheorem{theorem}{Theorem}[section]
\newtheorem{definition}{Definition}[section]
\newtheorem{example}{Example}[section]
\newtheorem{remark}{Remark}[section]
\newtheorem{assumption}{Assumption}[section]
\numberwithin{equation}{section}
\begin{document}

\centerline{\Large \bf Mean Field Games and Nonlinear Markov Processes\footnote{Supported by the AFOSR grant FA9550-09-1-0664 'Nonlinear Markov control processes and games'}\footnote{arXiv:1112.3744v}}
\bigskip
\bigskip
\centerline{\bf Vassili N.Kolokoltsov, Jiajie Li and Wei Yang}
\smallskip
\centerline{Department of Statistics, University of Warwick}
\centerline{Coventry, CV4 7AL, UK}
\centerline{\it v.kolokoltsov@warwick.ac.uk\quad jiajie.li@warwick.ac.uk\quad wei.yang@warwick.ac.uk}
\bigskip

\begin{abstract}
In this paper, we investigate the mean field games with $K$ classes of agents who are weakly coupled via the empirical measure. The underlying dynamics of the representative agents is assumed to be a controlled nonlinear Markov process associated with rather general integro-differential generators of L\'evy-Khintchine type (with variable coefficients),
with the major stress on applications to stable and stable-like processes, as well as their various modifications like tempered stable-like processes or their mixtures with diffusions. We show that nonlinear measure-valued kinetic equations describing the dynamic law of large numbers limit for system with large number $N$ of agents are solvable and that their solutions represent $1/N$-Nash equilibria for approximating systems of $N$ agents.
\end{abstract}

{\medskip\par\noindent
{\bf Mathematics Subject Classification (2000)}: 60H30, 60J25, 91A13, 91A15.
\smallskip\par\noindent
{\bf Key words}: stable-like processes, kinetic equation, Hamilton-Jacobi-Bellman equation, dynamic law of large numbers,  propagation of chaos, rates of convergence, tagged particle.
}


\section{Introduction}

\subsection{Main objectives}

The mean-field game (MFG) methodology represents one of the gems in the recent progress of stochastic control. It was developed independently by J.-M. Lasry and P.-L. Lions in France (where the term 'mean-field game' was coined, see \cite{LL2006}, \cite{LL2006a}, \cite{LL2007},\cite{GLL2010}) and by M. Huang, R.P. Malham\'e and P. Caines in Canada, where it was initially called the 'Nash certainty equivalence principle', see \cite {HMC03}, \cite{HMC05}, \cite{HCM3}, \cite{HCM07}, \cite{HCM10}, \cite{Hu10}. Mean-field game methodology aims at describing control processes with large number $N$ of participants by studying the limit $N\to \infty$ when the contribution of each member becomes negligible and their interaction is performed via certain mean-field characteristics, which can be expressed in terms of empirical measures. A characteristic feature of the MFG analysis is the study of a coupled system of a backward equation on functions (Hamilton-Jacobi-Bellman equation) and a forward equation on probability laws (Kolmogorov equation). The work on mean-field games so far was performed mostly for the underlying Markov process (describing individual evolutions) being a simple diffusion, where the method of McKean-Vlasov SDEs (describing the so called nonlinear diffusions) was available.

Meanwhile one of the authors of the present paper suggested in \cite{Ko07} the program of studying nonlinear Markov processes that describe the dynamic laws of large numbers for general Markov systems of interacting particles extending and unifying in a natural way various models of natural science including McKean-Vlasov diffusions, Smoluchovski and Boltzman evolutions, replicator dynamics of evolutionary games and many others. This program was then developed in some detail in monograph \cite{Ko10}. In this paper we aim to show that the machinery of nonlinear Markov processes can serve as a natural tool for studying mean-field games with the general underlying Markov dynamics of agents (not only diffusions). More specifically, the main consistency equation of MFG can be looked at as a coupling of a nonlinear Markov process with certain controlled dynamics. Using this link we develop the MFG methodology for a wide class of underlying Markov dynamics including in particular stable and stable-like processes, as well as various their modifications like tempered stable-like process or their mixtures with diffusions.

Moreover, our abstract approach yields essential improvements even for underlying processes being diffusions. In particular, as compared with \cite{HCM3}, it includes the case of diffusions coefficients (not only drifts) depending on empirical measures, it allows us to get rid of the assumption of small coupling (or composite gain), to prove (rather than just assume) the crucial sensitivity estimates (feedback regularity condition (37) in \cite{HCM3}), and finally to get a full prove of convergence rate of order $1/N$.


\subsection{The basic setting of MFG methodology and the strategy for its implementation}
\label{sebsecMFG methodol}

Let us explain now the main ideas, objectives and strategy of our analysis. Suppose a position of an agent is described by a point in a locally compact separable metric space $\XC$. A position of $N$ agents is then given by a point in the power $\XC^N=\XC\times \cdots \times \XC$ ($N$ times). Hence the natural state space for describing the variable (but not vanishing) number of players is the union $\hat \XC=\cup_{j=1}^{\infty} \XC^j$. We denote by
$C_{\sym}(\XC^N)$ the Banach spaces of symmetric (with respect to permutation of all arguments) bounded continuous functions on $\XC^N$ and by $C_{\sym}(\hat \XC)$ the corresponding space of functions on the full space $\hat \XC$. We denote the elements of $\hat \XC$ by bold letters, say $\x$, $\y$.

Reducing the set of observables to $C_{\sym}(\hat \XC)$ means effectively that our state space is not $\hat \XC$ (or $\XC^N$ in case of a fixed number of particles) but rather the quotient space $S\hat \XC$ (or $S\XC^N$ resp.) obtained with respect to the action of the group of permutations, which allows the identifications $C_{\sym}(\hat \XC)=C(S\hat \XC)$ and $C_{\sym}(\XC^N)=C(S\XC^N)$. Clearly $S\hat \XC$ can be identified with the set of all finite collections of points from $\XC$, the
order being irrelevant.

A key role in the theory of measure-valued limits of interacting particle systems is played by the inclusion $S\hat \XC$ to $\PC(\XC)$ (the set of probability laws on $\XC$) given by
\begin{equation}
\label{eqcorrsetparticlesandmes}
 \x=(x_1,...,x_N) \mapsto
\frac1N (\delta_{x_1}+\cdots +\delta_{x_N})=\frac1N \delta_{\x},
 \end{equation}
which defines a bijection between $S\XC^N$ and the subset $\PC^N_{\delta}(\XC)$ (of normalized sums of Dirac's masses) of $\PC(\XC)$.
This bijection extends to the bijection of $S\hat \XC$ to
 \[
 \PC_{\delta}(\XC):=\cup_{N=1}^{\infty}\PC^N_{\delta}(\XC)\subset \PC(\XC),
 \]
that can be used to equip $S\hat \XC$ with the structure of a metric space by pulling back any distance on $\PC(\XC)$ that is compatible with its weak topology.

\begin{remark}
With a slight abuse of notation, we use $\delta_{\x}$ to denote the sum of the Dirac's measures on $\XC$, i.e.
$$\delta_{\x}:=\delta_{x_1}+\cdots +\delta_{x_N}.$$
So $\delta_{\x}$ is not the Dirac measure on $\hat \XC$ with support at point $\x$.
\end{remark}

Let $\{A[t,\mu,u]\}$ be a family of generators of Feller processes in $\XC$, where $t\ge 0$, $\mu \in \PC(\XC)$  and $u\in \U$ (a metric space interpreted as a set of admissible controls). Assume also that a mapping $\gamma: \R^+ \times \XC \to \U$ is given. For any $N$, let us define the following (time-dependent) family of operators (pre-generators) on $C_{\sym}(\XC^N)$ describing $N$ mean-field interacting agents:

\begin{equation}
\widehat A^N_t[\gamma]f(\x)= \widehat A^N_t[\gamma]f(x_1,\cdots , x_N):=\sum_{i=1}^N A^i[t,\mu,u_i]f (x_1,\cdots , x_N),
\end{equation}
where
\[
\mu= \frac1N \sum_{i=1}^N \de_{x_i}=\frac1N \de_{\x}
\]
is the empirical distribution of agents, $u^i=\gamma (t,x_i)$ and $A^i[t,\mu,u_i]f$ means the action of the
operator $A[t,\mu,u_i]$ on the $i$th variable of the function $f$.
Let us assume that the family $\widehat A^N_t[\gamma]$ generates a Markov process $X^N=\{X^N(t)=(X^N_1(t),\dots,X^N_N(t):t\geq 0)\}$ on $\XC^N$ for any $N$. We shall refer to it as a {\it controlled  (via control $\gamma$) process of $N$ mean-field interacting agents}.

\begin{remark} In the terminology of statistical mechanics the operator $\hat A_t[\gamma]$ (considered for all $N$, i.e. lifted naturally to the whole space $C_{\sym}(\hat \XC)$) should be called the second quantization of
$A[t,\mu,u]$.
\end{remark}

Using mapping \eqref{eqcorrsetparticlesandmes},
we can transfer our process of $N$ mean-field interacting agents from $S\XC^N$ to $\PC^N_{\delta}(\XC)$.
This leads to the following operator on $C(\PC^N_{\delta}(\XC))$:
 \begin{equation}\label{eq0Nparticlegen}
\widehat A^N_t[\gamma]F(\de_{\x}/N)= \widehat A^N_t[\gamma] f(\x)
=\sum_{i=1}^N A^i[t,\mu,u_i]f (x_1,\cdots , x_N),
\end{equation}
where $f(\x)=F(\de_{\x}/N)$ and $\x=(x_1,\cdots, x_N)$.
Let us calculate the action of this operator on linear functionals $F$, that is on the functionals of the form
\begin{equation}\label{linearF}
F^g(\mu)=(g,\mu)=\int g(x) \mu (dx)
\end{equation}
for a $g\in C(\XC)$. Denoting $g^{\oplus} (\x)=\sum_{i=1}^N g(x_i)$ for $\x=(x_1,\cdots x_N)$ we get
\begin{equation}
\begin{split}
\label{eqNparticlegen}
&\widehat A^N_t[\gamma]F^g(\de_{\x}/N)= \frac1N \left( \widehat A^N_t[\gamma] g^{\oplus}\right)(x_1,\cdots , x_N)\\
&=\frac1N \sum_{i=1}^N \left(A[t, \de_{\x}/N, \gamma (t,x_i)]g\right)(x_i)
=\left(A[t,\de_{\x}/N, \gamma (t,.)]g,\de_{\x}/N \right).
\end{split}
\end{equation}
Hence, if $\mu^N_t= \de_{\x}/N\to \mu_t\in\PC(\XC)$ as $N\to \infty$, we have
$$\widehat A^N_t[\gamma]F^g(\de_{\x}/N) \to \left(A[t,\mu^N_t, \gamma (t,.)]g,\mu^N_t \right), \quad \text{as} \quad N\to \infty,$$
so that the evolution equation
\begin{equation}
\dot F_t=\widehat A^N_t[\gamma]F_t
\end{equation}
of our controlled process of $N$ mean-field interacting agents, for the linear functionals of the form $F^g_t(\mu)=(g,\mu_t(\mu))$ turns to the equation
\begin{equation}
\label{eqkineqmeanfield}
 {d \over dt} (g, \mu_t)
 =(A[t,\mu_t, \gamma (t,.) ]g, \mu_t), \quad \mu_0=\mu.
\end{equation}

We call this equation the {\it general kinetic equation} in weak form. It should hold for $g$ from a suitable class of test functions.  This limiting procedure will be discussed in detail later on.

Let us explain how the mapping $\gamma$ pops in from individual controls. Assume that the objective of each agent is to maximize (over a suitable class of controls $\{u.\}$) the payoff
  \[
   \E \left[ \int_t^T J(s,X^N_i(s),\mu^N_s,u_s) \, ds +V^T (X^N_i(T))\right],
\]
consisting of running and final components, where the functions $J: \R^+ \times \XC \times \PC(\XC)\times \U \to \R$ and $V^T: \XC\to \R$, and the final time $T$ are given, and where $\{\mu.\}$ is the family of the empirical measures of the whole process
  \[
  \mu^N_s=\frac{1}{N} (\de_{X^N_1(s)}+\cdots +\de_{X^N_N(s)}), \quad t\leq s\leq T.
  \]
 By dynamic programming (and assuming appropriate regularity), if the dynamics of empirical measures $\mu_s$ is given, the optimal payoff
  \[
 V_N(t,x)=\sup_{u.} \E \left[ \int_t^T J(s,X(s),\mu^N_s,u_s) \, ds +V^T (X(T))\right]
\]
of an agent starting at $x$ at time $t$ should satisfy the HJB equation
\begin{equation}
\label{HJB-N}
\frac{\pa V_N(t,x)}{\pa t}+\max_u \big(J(t,x,\mu^N_t,u) +A[t,\mu^N_t,u] V_N (t,x)\big)=0
\end{equation}
with the terminal condition $V_N(T,.)=V^T(\cdot)$. If $\mu^N_t\to \mu_t\in\PC(\XC)$ as $N\to \infty$, then it is reasonable to expect that the solution of \eqref{HJB-N} converges to the solution of the equation
\begin{equation}
\label{HJB}
\frac{\pa V(t,x)}{\pa t}+\max_u \big(J(t,x,\mu_t,u) +A[t,\mu_t,u] V (t,x)\big)=0.
\end{equation}

\begin{remark}
To shorten the formulas used below for the analysis of HJB equation \eqref{HJB-N}, we assume that the final cost function $V^T$ depends only on the terminal position $X(T)$, but not on $\mu_T$. Only minor modifications are required, if $V^T$ does depend on $\mu_T$.
\end{remark}

Assume HJB equation \eqref{HJB} is well posed and the $\max$ is achieved at one point only. Let us denote this point of maximum by $u=\Gamma (t,x, \{\mu_{\ge t}\})$.

Thus, if each agent chooses the control via HJB \eqref{HJB}, given an empirical measure $\hat \mu$, i.e. with
\begin{equation}
\label{couplHJBkin}
\ga (t,x)=\Ga (t, x, \{\hat \mu_{\ge t}\}),
\end{equation}
this $\ga$ specifies a nonlinear Markov evolution $\{\mu_t\}_{t\geq 0}$ via kinetic equation \eqref{eqkineqmeanfield}. The corresponding {\it MFG consistency (or fixed point) condition} $\{\hat \mu_.\}=\{\mu_.\}$
leads to the equation
\begin{equation}
\label{eqkineqmeanfieldnonM}
 {d \over dt} (g, \mu_t)
 =(A[t,\mu_t, \Gamma (t,.,\{\mu_{\ge t}\}) ]g, \mu_t),
\end{equation}
which expresses the {\it coupling} of the nonlinear Markov process specified by \eqref{eqkineqmeanfieldnonM}
and the optimal control problem specified by HJB \eqref{HJB}.
It is now reasonable to expect that if the number of agents $N$ tends to infinity in such a way that the limiting evolution is well defined and satisfies the limiting equation \eqref{eqkineqmeanfieldnonM}
with $\Gamma$ chosen via the solution of the above HJB equation, then the control $\gamma$ and the corresponding payoffs represent the $\ep$-Nash equilibrium for the controlled system of $N$ agents, with $\ep \to 0$, as $N\to \infty$. This statement (or conjecture) represents the essence of the {\it MFG methodology}.

Under certain assumptions on the family $A[t,\mu,u]$, we are going to justify this claim by carrying out the following tasks:

 T1) Proving the existence of solutions to the Cauchy problem for coupled kinetic equations \eqref{eqkineqmeanfieldnonM} within an appropriate class of feedback $\Gamma$ and the well-posedness for the uncoupled equations
 \eqref{eqkineqmeanfield}. Notice that we are not claiming uniqueness for \eqref{eqkineqmeanfieldnonM}. It is difficult to expect this, as in general Nash equilibria are not unique. At the same time, it seems to be an important open problem to better understand this non-uniqueness by describing and characterizing specific classes of solutions. On the other hand, well-posedness for the uncoupled equations \eqref{eqkineqmeanfield} is crucial for further analysis.

 T2) Proving the well-posedness of the Cauchy problem for the (backward) HJB equation \eqref{HJB}, for an arbitrary flow $\{\mu.\}$ in some class of regularity, yielding the feedback function $\Gamma$ in the class required by T1).
 This should include some sensitivity analysis of $\Gamma$ with respect to the functional parameter $\{\mu_.\}$, which will be needed
 to show that approximating the limiting MFG distribution $\{\mu.\}$ by approximate $N$-particle empirical measures yields also
an approximate optimal control. To perform this task, we shall assume here additionally that the operators
$A[t,\mu,u]$ in \eqref{HJB} can be decomposed into the sum of a controlled 1st order term and a term that does not depend on control and generates a propagator with certain smoothing properties. This simplifying assumption
allows to work out the theory with classical (or at least mild) solutions of HJB equations. Without this assumption, one would have to face additional technical complications related to viscosity solutions.

 T3) Showing the convergence of the $N$-particle approximations, given by generators \eqref{eqNparticlegen} to the limiting evolution \eqref{eqkineqmeanfield}, i.e. the {\it dynamic laws of large numbers} (LLN), for a class of controls
$\ga$ arising from \eqref{couplHJBkin} with a fixed $\{\hat \mu.\}$, where $\Ga$ is from the class required for the validity of T1) and T2). Here one can use either more probabilistic compactness and tightness (on Skorokhod paths spaces) approach, or a more analytic method (suggested in \cite{Ko07}) via semigroups of linear operators on continuous functionals of measures. In this paper, we shall use the second method, as it yields more precise convergence rates. For the analysis of the convergence of the corresponding semigroups the crucial ingredient is the analysis
 of smoothness ({\it sensitivity}) of the solutions to kinetic equations
\eqref{eqkineqmeanfield} with respect to initial data. The rates of convergence in LLN imply directly the corresponding rather precise estimates for the so-called {\it propagation of chaos} property of interacting particles.

 T4) Finally, combining T2) and T3), one has to show that thus obtained strategic profile \eqref{couplHJBkin}
 with $\{\hat \mu.\}=\{\mu.\}$ represents an $\ep$-{\it equilibrium} for $N$ agents system with $\ep\to 0$, as $N\to \infty$. Actually we going to prove this with $\ep=1/N$ using the method of {\it tagged particles} in our control setting.

\begin{remark}
A similar, but slightly different statement of the MFG methodology would be the claim that a sequence of Nash equilibria for $N$ particle game converges to a profile $\Gamma$ solving \eqref{eqkineqmeanfieldnonM}. By our methods it would not be difficult to justify this claim if existence and uniqueness of a Nash equilibrium for each $N$-player stochastic differential game were given. Our formulation of MFG methodology allows us to avoid discussion of this nontrivial question.
\end{remark}


\subsection{Discrete classes of agents}
\label{subescdiscrete}

Let us specify our model a bit further.

Of particular interest are the models with the one-particle space $\XC$ having a spatial and a discrete components, the latter interpreted as a type of an agent. Thus let $\XC=\R^d \times \KC$, where $\KC$ is either a finite or denumerable set. In this case, functions from $C(\XC)$ can be represented by sequences $f=(f_i)_{i\in \KC}$ with each $f_i \in C(\R^d)$, the probability laws on $\XC$ are similarly given by the sequences $\mu=(\mu_i)_{i\in \KC}$ of positive measures on $\R^d$ with the masses totting up to one.

The operators $A$ in $C(\XC)$ are specified by operator-valued matrices $\{A_{ij}\}$, $i,j \in \KC$, with $A_{ij}$ being an operator in $C(\R^d)$, so that $(Af)_i=\sum_{j\in \KC} A_{ij}f_j$. It is not difficult to show (see \cite{BeK3} for details) that for such a matrix $A$ to define a conditionally positive conservative operator in $C(\XC)$ (in particular, a generator of a Feller process) it is necessary that $A_{ij}$ for $i\neq j$ are integral operators
    \[
    (A_{ij} f)(z)=\int_{\R^d} (f_j(y)-f(z))\nu_{ij}(z,dy)
    \]
with a bounded (for each $z$) measure $\nu_{ij}(z,dy)$, and the diagonal terms are given by the {\it L\'evy-Khintchin} type operators ($i\in \KC$):
\[
 A_{ii}f(z)=\frac{1}{2}(G_i(z)\nabla,\nabla)f(z)+ (b_i(z),\nabla f(z))
\]
\begin{equation}
\label{fellergenerator}
 +\int_{\R^d}
(f(z+y)-f(z)-(\nabla f (z), y){\bf 1}_{B_1}(y))\nu_i (z,dy),
\end{equation}
with $G_i(z)$ being a symmetric non-negative matrix, $\nu_i(z,.)$ being a L\'evy
measure on $\R^d$, i.e.
\begin{equation}
\label{condlevy0}
\int_{\R^d} \min (1,|y|^2)\nu_i (z, dy) <\infty, \quad \nu (\{0\})=0,
\end{equation}
depending measurably on $z$, and where ${\bf 1}_{B_1}$ denotes, as usual, the indicator function of the unit ball in $\R^d$.

Operators $A_{ij}$ with $i\neq j$ describe the {\it mutation (migration)} between the types. Leaving the discussion of mutations of infinite number types to another publication, we shall concentrate here,
 for simplicity, on the case when $\KC$ is a finite set $\{1,\cdots, K\}$ and no mutations are allowed, that is all non-diagonal operators $A_{ij}$ with $i\neq j$ vanish. Hence $A$ will be given by a diagonal matrix with the diagonal terms $A_i=A_{ii}$ of type \eqref{fellergenerator}.

Let us assume additionally that each agent can control only its drift, that is the diagonal generators have the form
\begin{equation}
\label{fellergeneratorwithdriftcont}
A_i[t,\mu,u]f(z)=(h_i(t,z,\mu,u), \nabla f(z)) + L_i[t,\mu] f(z), \quad i=1,\cdots, K,
\end{equation}
with $L_i$ of form \eqref{fellergenerator}, i.e.
\begin{equation}\label{fellergenerator 2}
\begin{split}
&L_i[t,\mu]f(z)=\frac{1}{2}(G_i(t,z,\mu)\nabla,\nabla)f(z)+ (b_i(t,z,\mu),\nabla f(z))\\
 &+\int_{\R^d} (f(z+y)-f(z)-(\nabla f (z), y){\bf 1}_{B_1}(y))\nu_i (t,z,\mu,dy)
\end{split}
\end{equation}
with the coefficients $G_i, b_i, \nu_i$ depending on $t \in \R^+$ and $\mu=(\mu_1,\cdots, \mu_K) \in \PC (\XC)$ as parameters.

\begin{remark}
If, for a given (probability) measure flow $\{\mu_t\}_{t\in[0,T]}$, the operators
$L[t,\mu_t]=(L_1, \cdots, L_K)[t, \mu_t]$ generate a Markov process $\{R_t[\mu_t]\}_{t\in[0,T]}=\{(R_t^1[\mu_t], \cdots,  R_t^K[\mu_t] )\}_{t\in[0,T]}$, one can write a stochastic differential equation (SDE) corresponding to the generator given in (\ref{fellergeneratorwithdriftcont}) as
\[
dX_t^i=h_i(t,X_t^i, \mu_t, u^i_t)\, dt+dR_t^i[\mu_t],\quad i=1,\cdots, K.
\]
If $\mu_t$ are required to coincide with the laws of $X_t^i$,  for all $t\in[0,T]$, these equations take the form of
 {\it SDEs driven by nonlinear L\'evy noises}, developed in \cite{Ko11a} and Chapter 3 of \cite{Ko10}.

The initial work on the mean field games, done by Lions et al. and Caines et al., dealt with
the processes $R_t[\mu]$ being Brownian Motions without dependence on $\mu$. In our framework, this underlying process is extended to an arbitrary Markov process with a generator (\ref{fellergenerator 2}) depending on $\mu$.
\end{remark}

In the main kinetic equation \eqref{eqkineqmeanfield}, we shall then have
$(g,\mu_t)=\sum_{i=1}^K (g_i,\mu_{i,t})$ and
\[
A[t,\mu_t, \gamma (t,.) ]g=\{A_i[t,\mu_t, \gamma (t,.) ]g_i\}_{i=1}^K
\]
with
\begin{equation} \label{Ai}
 A_i[t,\mu, \gamma (t,.) ]g_i(z)
 =(h_i(t,z,\mu,\gamma (t,.)), \nabla g_i(z)) + L_i[t,\mu] g_i(z).
\end{equation}

HJB equation (\ref{HJB}) now decomposes into a collection of HJB equations for each class of agents, written as
\begin{equation}
\label{HJB 2}
\frac{\partial V^i (t,x)}{\partial t}+ H_t^i(x,\nabla V^i (x), \mu_t)+ L_i[t,\mu_t]V^i(t,x)=0
\end{equation}
where
\begin{equation}
\label{HJB 2a}
H_t^i(x,p, \mu_t):=\max_{u\in\U} \{  h_i(t,x,\mu_t,u)p+J_i(t,x,\mu_t,u)\}.
\end{equation}
We have assumed the resulting feedback control is unique (i.e. argmax in \eqref{HJB 2a} is unique). Let us give two (related) basic examples of such a situation.

\begin{example}[$H_\infty$-optimal control, see \cite{McEn} for its systematic presentation]
For each $i$, the running cost function $J_i$ is quadratic in $u$, i.e.
\[
J_i(t,x,\mu,u)=\alpha_i (t,x,\mu)-\theta_i (t,x,\mu)u^2
\]
and the drift coefficient $h_i$ is linear in $u$, i.e.
\[
h_i(t,x,\mu,u)=\beta_i (t,x,\mu)u,
\]
where the functions $\alpha_i, \beta_i, \theta_i: [0,T]\times \R^d \times \PC(\R^d)\to \R$
 and $\theta_i (t,x,\mu)> 0$ for any $(t,x,\mu)$. Thus we are maximising a quadratic function over control $u$. It is easy to get an explicit formula of the unique point of maximum, i.e.
\[
u=\frac{\beta }{2\theta}(t,x,\mu)p.
\]
Thus HJB equation (\ref{HJB 2}) rewrites as
\[
\frac{\partial V^i (t,x)}{\partial t}+ \frac{\beta_i^2}{4\theta_i}(t,x,\mu)(\nabla V^i)^2(t,x)
 +\al_i(t,x,\mu) + L_i[t,\mu_t]V^i(t,x)=0
\]
which is a generalized backward Burger's equation.
\end{example}

\begin{example} Assume, for each $i$,  $h_i(t,x,\mu,u)=u$ and $J_i(t,x,\mu,u)$ is a strictly concave smooth function of $u$.
Then $H^i_t$ is the Legendre transform of $-J$ as a function of $u$, and the unique point of maximum in
\eqref{HJB 2a} is therefore $u=\pa H^i_t/\pa p$. Then kinetic equation
\eqref{eqkineqmeanfieldnonM} takes the form
\begin{equation}
\label{eqLionsset}
\frac{d}{dt}  (g, \mu_t)=\sum_{k=1}^K (L_i (t, \mu_t) g_i
+ \frac{\pa H^i_t}{\pa p} (x,p,\mu_t)|_{p=\nabla V^i(x)} \nabla g_i, \mu_{i,t}).
\end{equation}
If $K=1$ and $J_i(t,x,\mu,u)$ has the decomposition
$$J_i(t,x,\mu,u)=\tilde{V}(x,\mu)+\tilde J(x,u)$$
for $\tilde{V}:\R^d\times \P(\R^d)$ and $\tilde J: \R^d\times \U$, and $L_i (t,\mu)=\Delta$,  the corresponding
system of coupled equations \eqref{eqLionsset} and \eqref{HJB 2} turns to system (2) of
\cite{LL2007} (only there the kinetic equation is written in the strong form and in reverse time).
\end{example}

Concrete examples, for which our abstract results hold, are presented in Section \ref{examples}.


\subsection{Plan of the paper}

The rest of the paper is organized as follows. In Section \ref{secabstractnonMark}, we prove basic existence and uniqueness results for the extension of forward kinetic equation \eqref{eqkineqmeanfield} obtained by coupling $\gamma$ with $\{\mu_t\}_{t\in[0,T]}$. In Section \ref{secnonlinMark}, we analyze more specifically equation \eqref{eqkineqmeanfield}
 without any coupling and give a probabilistic interpretation of its solutions as nonlinear Markov processes, as well as some related regularity results. Then the basic examples are discussed in Section \ref{examples}.
 In Section \ref{Cauchy HJB}, we are concerned with the well-posedness of the backward HJB equation (\ref{HJB}), i.e. the existence and uniqueness of mild solutions to the equation with the given terminal boundary condition. Our main additional assumption will be a smoothing property of the semigroup of an underlying Markov process that defines an uncontrolled part of the evolution of agents (generated by the operators $L_i$ in \eqref{fellergeneratorwithdriftcont}). Sensitivity analysis on the solution of this HJB equation will be carried out in Section \ref{Sensitivity} completing in essence tasks T1) and T2) outlined above
 in Subsection \ref{sebsecMFG methodol}, for models with discrete classes from Subsection \ref{subescdiscrete}.
 Section \ref{secsmoothdepnonlinMark} deals
 with smooth dependence (sensitivity) of nonlinear Markov processes with respect to initial data. This is crucial
 for the proof of the dynamic laws of large numbers (LLN) established, in an appropriate form with rates of convergence, in Section \ref{Convergence} solving task T3) from Subsection \ref{sebsecMFG methodol}.
  Section \ref{secepequilibrium} completes our programme by justifying the approximation of $N$ players stochastic games by a mean-field limit.


\subsection{Main notations}

The following basic notations will be used:

For a Banach space $\B$, $\B^*$ is the dual Banach space with the norm $\|\mu\|_{\B^*}=\sup_{\|f\|_{\B}\leq 1}|(f,\mu)|$

For a linear operator $L$ between the Banach spaces $\D$ and $\B$, we use the standard notation for its norm
$$\|L\|_{\D\mapsto \B} = \sup_{\|g\|_\D=1}\|Lg\|_\B;$$
in particular, $\|L\|_\B = \|L\|_{\B\mapsto \B}$

$C([0,T], \B)$ is the Banach space of continuous functions $t \rightarrow \mu_t \in \B$  with the norm $\sup_{t\in[0,T]}\|\mu_t\|_{\B}$

If $\M$ is a closed subset of $\B$, then

$C([0,T], \M(\B))$ is a closed subset of $C([0,T], \B)$, consisting of continuous functions $t \rightarrow \mu_t \in \M(\B)$; using $\M(\B)$ rather than simply $\M$, stresses the topology used

$C_{Lip}([0,T], \M(\B))$ is a subset of $C([0,T], \M(\B))$, consisting of Lipschitz continuous functions $t \rightarrow \mu_t \in \M(\B)$

$C_{\mu}([0,T], \B)$ (resp. $C_{\mu}([0,T], \M(\B))$), a closed subset of $C([0,T], \B)$ (resp. $C([0,T], \M(\B))$), consists of continuous curves $\xi_s\in \B$ (resp. $\xi_s\in\M(\B)$), $s\in [0,T]$ with a fixed initial data $\xi_0=\mu \in \MC$

$C^T_{\mu}([0,T], \B)$ (resp. $C^T_{\mu}([0,T], \M(\B))$), a closed subset of $C([0,T], \B)$ (resp. $C([0,T], \M(\B))$), consists of continuous curves $\xi_s\in \B$ (resp. $\xi_s\in\M(\B)$), $s\in [0,T]$ with a fixed terminal data $\xi_T=\mu$

For a locally compact metric space $\XC$,

$C_{\infty}(\XC)$ is the Banach space of bounded continuous functions $f$ on $\XC$ with $\lim_{x\rightarrow\infty}f(x)=0$, equipped with sup-norm

$\mathbf{M}^{sign}(\XC)$, the dual space of $C_{\infty}(\XC)$, is the Banach space of signed Borel measures on $\XC$;
$(f,\mu)=\int f(x)\mu(dx)$ denotes the usual pairing

$\mathbf{M}^{+}(\XC)$, a subset of $\mathbf{M}^{sign}(\XC)$, is the set of positive Borel measures on $\XC$

$\P(\XC)$ is the set of probability measures on $\XC$

$C_{Lip}(\mathbf{R}^d)$ is the Banach space of bounded Lipschitz continuous functions $f$ on $\mathbf{R}^d$ with the norm $\|f\|_{{Lip}}=\sup_x|f(x)|+\sup_{x\neq y}\frac{|f(x)-f(y)|}{|x-y|}$

$C^n(\mathbf{R}^d)$ is the Banach space of $n$ times continuously differentiable and bounded functions $f$ on $\mathbf{R}^d$  such that each derivative up to and including order $n$ is bounded, equipped with norm $\|f\|_{C^n}$ which is the supremum of the sums of all the derivatives up to and including order $n$

$C_{\infty}^n(\mathbf{R}^d)$ is a closed subspace of $C^n(\mathbf{R}^d)$ with $f$ and all its derivatives up to and including order $n$ belonging to $C_{\infty}(\mathbf{R}^d)$; sometimes we use short notations, $C_{\infty}^n$ and $C_{\infty}$, respectively

\section{Abstract anticipating kinetic equations}
\label{secabstractnonMark}

This section extends some existence and uniqueness results for nonlinear Markov processes from \cite{Ko10} to a rather general anticipating (non-Markovian) setting.

Let $\D$ be a dense subset of $\B$, which is itself a Banach space with the norm $\|\,\|_\D \geq \|\,\|_\B$.

  A deterministic dynamic in the dual Banach space $\B^*$ can be naturally specified by vector-valued ordinary differential equation
\begin{equation}
\label{ODE1}
	\dot{\mu_t}=\Omega(t, \mu_t)
\end{equation}
with given initial value $\mu_0\in \B^*$, where $\Omega$ is a nonlinear (possibly unbounded) operator in $\B^*$. One can write equation (\ref{ODE1}) in the weak form as the equation
\begin{equation}
\label{ODE2}
(f,\dot{\mu_t})=(f,\Omega(t,\mu_t))
\end{equation}
that must hold for some class of test functions $f\in \B$. In many applications, equation (\ref{ODE2}) can be represented in the form
\begin{equation}
\label{ODE3}
\frac{d}{dt}(f,\mu_t)=(A[t,\mu_t]f,\mu_t)
\end{equation}
where the mapping $(t,\eta)\mapsto A[t,\eta]$ is from $\R^+\times \B^*$ to linear operators $A[t, \eta]: \D\mapsto \B$ and for each pair $(t,\eta)$, $A[t,\eta]$ generates a semigroup in $\B$. We call equation (\ref{ODE3}) the {\it general Markovian kinetic equation}, as it contains most of the basic equations of non-equilibrium statistical mechanics and evolutionary biology. For an extensive discussion of its properties and applications we refer to monograph
\cite{Ko10}.
Of major interest is the case when $\B^*$ is the space of measures on a locally compact space $\XC$,
 i.e. $\B^*=\mathbf{M}^{sign}(\XC)$ and $A[t, \eta]$ generate Feller processes, but in this section we shall work in an abstract setting.

 For the study of MFG, we need a anticipating extension of equation (\ref{ODE3}), that is an {\it anticipating} or {\it non-Markovian kinetic equation} of the form
\begin{equation}
\label{Non Markovian KE}
	\frac{d}{dt}(f,\mu_t)=(A[t,\{\mu_s\}_{0\leq s\leq T}]f,\mu_t)
\end{equation}
where the mapping $(t,\{\eta.\})\mapsto A[t,\{\eta.\}]$ is from $\R^+\times C_{\mu}([0,T], \B^*)$ to linear operators $A[t, \{\eta.\}]: \D\mapsto \B$.

Two particular cases are of major importance. When the operators $A$ only depend on the past, i.e. $\{\mu_s\}_{0\leq s\leq T}=\{\mu_{\leq t}\}$, equation (\ref{Non Markovian KE}) can be called a {\it non-anticipating kinetic equation}; when the generators $A$ only depend on the future, i.e. $\{\mu_s\}_{0\leq s\leq T}=\{\mu_{\geq t}\}$, equation (\ref{Non Markovian KE}) can be called a {\it forecasting kinetic equation}.

Non-anticipating equations can be seen as analytic analogs of SDE with adapted coefficients, and their well-posedness can be obtained by similar methods. But for MFG, we need much more complicated forecasting equations. Here we prove local well-posedness and global existence results for general equations \eqref{Non Markovian KE} (including forecasting ones) and global well-posedness for non-anticipating case.

Let us start by recalling the basic properties of propagators. For a set $S$, a family of mappings $U^{t,r}$ from $S$ to itself, parametrized by the pairs of numbers $r\leq t$ (resp. $t\leq r$) from a given finite or infinite interval is called a {\it (forward) propagator} (resp. a {\it backward propagator}) in $S$, if $U^{t,t}$ is the identity operator in $S$ for all $t$ and the following {\it chain rule}, or {\it propagator equation}, holds for $r\leq s\leq t$ (resp. for $t\leq s\leq r$):
$$U^{t,s}U^{s,r}=U^{t,r}.$$

A backward propagator ${U^{t,r}}$ of bounded linear operators on a Banach space $\B$ is called {\it strongly continuous} if the operators ${U^{t,r}}$ depend strongly continuously on t and r. By the principle of uniform boundedness if ${U^{t,r}}$ is a strongly continuous propagator of bounded linear operators, then the norms of ${U^{t,r}}$ are bounded uniformly for $t,r$ from any compact interval.

Suppose ${U^{t,r}}$ is a strongly continuous backward propagator of bounded linear operators on a Banach space with a common invariant domain $\D$. Let ${L_t}$, $t\geq0$, be a family of bounded linear operators $\D\mapsto \B$ depending strongly measurably on $t$. Let us say that the family ${L_t}$ {\it generates} ${U^{t,r}}$ on $\D$ if,
for any $f\in \D$, the equations
\begin{equation}
\label{Generates}
       \frac{d}{ds}U^{t,s}f = U^{t,s}L_sf, \quad \frac{d}{ds}U^{s,r}f = -L_sU^{s,r}f, \quad 0\leq t\leq s\leq r,
\end{equation}
hold a.s. in $s$ (with the derivatives taken in the topology of $B$), that is there exists a set of zero-measure $S$ in $\R$ such that for all $t\leq r$ and all $f\in \D$ equations (\ref{Generates}) hold for all $s$ outside $S$, where the derivatives exist in the Banach topology of $\B$. In particular, if the operators $L_t$ depend strongly continuously on $t$ (which will be always the case in this paper),  equations (\ref{Generates}) hold for all $s$ and $f\in \D$, where for $s=t$ (resp. $s=r$) it is assumed to be only a right (resp. left) derivative. In the case of propagators in the space of measures, the second equation in (\ref{Generates}) is called {\it  the backward Kolomogorov equation}.

One often needs to estimate the difference of two propagators when the difference of their generators is available.
To this end, we shall often use the following rather standard trick that we formulate now in abstract form.

\begin{prop}
\label{prop-propergatorProperty}
Let ${L_t^i}$, $i=1,2$, $t\geq 0$, be two families, continuous in $t$, of bounded linear operators $\D\mapsto \B$ ($\D$ and $\B$ are two Banach spaces equipped with a continuous inclusion $\D\to \B$) and ${U_i^{t,r}}$ be two backward propagators in $\B$
with $\|U_i^{t,r}\|_\B \le c_1$, $i=1,2$, s.t. for any $f\in \D$ equations \eqref{Generates} hold in $\B$ for both pairs $(L_i,U_i)$. Let $\D$ be invariant under $U_1^{t,r}$ and $\|U_1^{t,r}\|_\D \le c_2$. Then
\begin{equation}
\label{PropergatorProperty }
U_2^{t,r}-U_1^{t,r}=\int_t^rU_2^{t,s}(L^2_s-L_s^1)U_1^{s,r}ds
\end{equation}
and
\begin{equation}
\label{PropergatorPropertya}
\|U_2^{t,r}-U_1^{t,r}\|_{\D\to \B} \le c_1c_2(r-t) \sup_{t\leq s\le r}\|L^1_s-L_s^2 \|_{\D\to \B}.
\end{equation}
\end{prop}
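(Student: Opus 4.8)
The plan is to use the standard interpolation (Duhamel) trick. Fix $f\in\D$ and consider the $\B$-valued function $\phi(s):=U_2^{t,s}U_1^{s,r}f$ for $s\in[t,r]$. Its endpoint values telescope: since $U_2^{t,t}$ and $U_1^{r,r}$ are the identity, $\phi(t)=U_1^{t,r}f$ and $\phi(r)=U_2^{t,r}f$, so that $\phi(r)-\phi(t)$ is precisely the difference $\bigl(U_2^{t,r}-U_1^{t,r}\bigr)f$ we wish to compute. Once $\phi$ is shown to be continuously differentiable, the fundamental theorem of calculus for Banach-space-valued functions gives $\bigl(U_2^{t,r}-U_1^{t,r}\bigr)f=\int_t^r\phi'(s)\,ds$, and the whole argument reduces to identifying $\phi'(s)$.

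To differentiate $\phi$ I would establish a product rule. First note that, because $\D$ is invariant under $U_1$, the argument $U_1^{s,r}f$ lies in $\D$ for every $s$, so both generation equations in \eqref{Generates} apply: the second (backward Kolmogorov) equation for the pair $(L^1,U_1)$ gives $\frac{d}{ds}U_1^{s,r}f=-L_s^1U_1^{s,r}f$, while the first equation for the pair $(L^2,U_2)$ gives, with $g=U_1^{s,r}f\in\D$ held fixed, $\frac{d}{d\sigma}U_2^{t,\sigma}g\big|_{\sigma=s}=U_2^{t,s}L_s^2g$. Splitting the difference quotient of $\phi$ in the usual way,
\[
\frac{\phi(s+\ep)-\phi(s)}{\ep}=U_2^{t,s+\ep}\,\frac{U_1^{s+\ep,r}f-U_1^{s,r}f}{\ep}+\frac{U_2^{t,s+\ep}-U_2^{t,s}}{\ep}\,U_1^{s,r}f,
\]
I would pass to the limit termwise. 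The second term converges to $U_2^{t,s}L_s^2U_1^{s,r}f$ by the generation equation just quoted. For the first term one uses the elementary fact that if $T_\ep\to T$ strongly with $\sup_\ep\|T_\ep\|\le c_1$ and $g_\ep\to g$ in $\B$, then $T_\ep g_\ep\to Tg$; applying this with $T_\ep=U_2^{t,s+\ep}$ (uniformly bounded by $c_1$ and strongly continuous) and $g_\ep$ the difference quotient of $U_1^{\cdot,r}f$ (which converges in $\B$ to $-L_s^1U_1^{s,r}f$) yields $-U_2^{t,s}L_s^1U_1^{s,r}f$. Hence $\phi'(s)=U_2^{t,s}(L_s^2-L_s^1)U_1^{s,r}f$, the desired integrand.

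Integrating over $[t,r]$ then gives the integral identity; the continuity in $s$ of the integrand, which justifies the fundamental theorem of calculus, follows from the strong continuity of the propagators together with the assumed continuity of $L^i_s$ in $s$. The norm estimate is routine: for $\|f\|_\D\le1$ I would bound, under the integral, $\|U_2^{t,s}\|_\B\le c_1$, then $\|(L_s^2-L_s^1)U_1^{s,r}f\|_\B\le\|L_s^2-L_s^1\|_{\D\to\B}\,\|U_1^{s,r}f\|_\D\le c_2\|L_s^2-L_s^1\|_{\D\to\B}$ using $\|U_1^{s,r}\|_\D\le c_2$, and pull the supremum of $\|L_s^1-L_s^2\|_{\D\to\B}$ out of the integral, producing the factor $c_1c_2(r-t)$.

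I expect the main obstacle to be the rigorous justification of the product rule, specifically the termwise passage to the limit in the difference quotient, where both the operator $U_2^{t,s+\ep}$ and its argument vary with $\ep$. This is exactly where the hypotheses enter in an essential way: the invariance of $\D$ under $U_1$ (so that the generation equation for $U_2$ may be applied to $U_1^{s,r}f$) and the uniform bound $\|U_2^{t,s}\|_\B\le c_1$ combined with strong continuity (so that strong convergence of operators may be paired with norm convergence of arguments). Everything else is bookkeeping.
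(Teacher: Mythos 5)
Your proposal is correct and follows essentially the same route as the paper: both define the interpolating function $s\mapsto U_2^{t,s}U_1^{s,r}$, differentiate it using the two generation equations in \eqref{Generates}, and integrate over $[t,r]$ to obtain the identity and then the norm bound. The only difference is that you spell out the termwise limit in the product rule (which the paper leaves implicit), and your justification of that step is sound.
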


\proof
Define an operator-valued function $Y(s):=U_2^{t,s}U_1^{s,r}$. Since $U_i^{t,t}$ are identity operators, $Y(r)=U_2^{t,r}$ and $Y(t)=U_1^{t,r}$. By (\ref{Generates}), we get
\begin{equation*}
\begin{split}
U_2^{t,r}-U_1^{t,r}&=U_2^{t,s}U_1^{s,r}\big|_{s=t}^r=\int_t^r\frac{d}{ds} \left(U_2^{t,s}U_1^{s,r} \right) ds\\
&=\int_t^r U_2^{t,s} L_s^2 U_1^{s,r}-U_2^{t,s} L_s^1 U_1^{s,r}ds\\
&=\int_t^rU_2^{t,s}(L^2_s-L_s^1)U_1^{s,r}ds,
\end{split}
\end{equation*}
which implies both \eqref{PropergatorProperty } and \eqref{PropergatorPropertya}.
\qed

As a consequence we get a simple result on the continuous dependence of propagators on parameters that can be considered as a starting point for later developed sensitivity.

\begin{prop}
\label{prop-propergatorscontdep}
Let $\{L[t,\al]: t\geq 0, \al\in \R\}$ be a family, continuous in $t$ and $\al$, of bounded linear operators $\D\mapsto \B$ ($\D$ and $\B$ are two Banach spaces equipped with a continuous inclusion $\D\to \B$) and ${U_{\al}^{t,r}}$ be backward propagators in $\B$ generated by $\{L[t,\al]\}$ on a common invariant domain $\D$
s.t. $\|U_{\al}^{t,r}\|_\B$ and $\|U_{\al}^{t,r}\|_\D$ are both uniformly bounded. Then
the family $\{U_{\al}^{t,r}\}$ depends strongly continuously on $\al$, as a family of operators in $\B$.
\end{prop}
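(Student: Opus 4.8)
The plan is to fix a test element $f \in \D$ and two parameter values $\al, \al' \in \R$, and to estimate the difference of the corresponding propagators applied to $f$ by invoking Proposition 2.1. Concretely, I would apply the identity \eqref{PropergatorProperty } (and its norm consequence \eqref{PropergatorPropertya}) with the pair $(L^1_s, U_1^{t,r}) = (L[s,\al], U_\al^{t,r})$ and $(L^2_s, U_2^{t,r}) = (L[s,\al'], U_{\al'}^{t,r})$. The hypotheses of Proposition 2.1 are met: both families generate the respective propagators on the common invariant domain $\D$, the $\B$-norms $\|U_\al^{t,r}\|_\B \le c_1$ and the $\D$-norms $\|U_\al^{t,r}\|_\D \le c_2$ are uniformly bounded by assumption, so \eqref{PropergatorPropertya} gives
\begin{equation*}
\|U_{\al'}^{t,r} - U_\al^{t,r}\|_{\D \to \B} \le c_1 c_2 (r-t) \sup_{t \le s \le r} \|L[s,\al'] - L[s,\al]\|_{\D \to \B}.
\end{equation*}

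Next I would use the joint continuity of $(t,\al) \mapsto L[t,\al]$ to control the supremum on the right. Fixing a compact interval $[0,R]$ for the time parameters and a compact neighbourhood of $\al$, the map $(s,\al') \mapsto L[s,\al']$ is continuous on a compact set and hence uniformly continuous; therefore $\sup_{t \le s \le r}\|L[s,\al'] - L[s,\al]\|_{\D \to \B} \to 0$ as $\al' \to \al$. Combined with the uniform bound on the prefactor $c_1 c_2 (r-t)$, this yields $\|U_{\al'}^{t,r} - U_\al^{t,r}\|_{\D \to \B} \to 0$, so in particular for each fixed $f \in \D$ we get $\|(U_{\al'}^{t,r} - U_\al^{t,r})f\|_\B \to 0$ as $\al' \to \al$, uniformly for $t,r$ in the compact interval.

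The remaining point is to upgrade convergence on the dense subspace $\D$ to strong continuity on all of $\B$. Given an arbitrary $g \in \B$ and $\ep > 0$, I would choose $f \in \D$ with $\|g - f\|_\B < \ep$ (possible since $\D$ is dense in $\B$) and split
\begin{equation*}
\|(U_{\al'}^{t,r} - U_\al^{t,r})g\|_\B \le \|(U_{\al'}^{t,r} - U_\al^{t,r})f\|_\B + (\|U_{\al'}^{t,r}\|_\B + \|U_\al^{t,r}\|_\B)\,\|g - f\|_\B.
\end{equation*}
The second term is bounded by $2c_1 \ep$ using the uniform $\B$-bound, while the first term tends to $0$ as $\al' \to \al$ by the previous paragraph. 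Letting $\al' \to \al$ and then $\ep \to 0$ gives the claim.

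I do not expect a genuine obstacle here, since the statement is essentially a corollary of Proposition 2.1 combined with a standard density/$3\ep$ argument. The only point demanding mild care is the passage from the pointwise continuity hypothesis on $\al \mapsto L[t,\al]$ to the uniform bound on $\sup_s \|L[s,\al']-L[s,\al]\|_{\D\to\B}$ needed to make the right-hand side of \eqref{PropergatorPropertya} small; restricting $s$ to a compact time interval and using uniform continuity on compacta handles this cleanly.
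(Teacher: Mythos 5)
Your argument is correct and follows essentially the same route as the paper: the authors likewise deduce continuity of $\al\mapsto U_{\al}^{t,r}f$ for $f\in\D$ from the estimate \eqref{PropergatorPropertya} of Proposition \ref{prop-propergatorProperty}, and then extend to all of $\B$ by density of $\D$ and the uniform bound on $\|U_{\al}^{t,r}\|_\B$. Your write-up merely spells out the uniform-continuity-on-compacta step and the $3\ep$ splitting that the paper leaves implicit.
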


\begin{proof}
From \eqref{PropergatorPropertya} we conclude that $U_{\al}^{t,r}f$ is a continuous function $\al \mapsto \B$ for any $f\in \D$. By the density of $\D$ in $\B$ (and boundedness of $\|U_{\al}^{t,r}\|_\B$) this property extends to all $f\in \B$.
\end{proof}

We are ready to present our results on the existence of solutions to general kinetic equations.

\begin{theorem} [local well-posedness for general anticipating case]
\label{thkineticeq}
Let $\MC$ be a convex and bounded subset of $\B^*$ with $\sup_{\mu \in \MC} \|\mu \|_{\B^*} \le M$, which is closed in the norm topologies of both $\B^*$ and $\D^*$. Suppose that

(i) the linear operators $A[t,\{\xi.\}]: \D\mapsto \B$ are uniformly bounded and Lipschitz in $\{\xi.\}$, i.e. for any $\{\xi.\}, \{\eta.\}\in C_{ \mu}([0,T], \M(\D^*))$
	\begin{equation}\label{Lip_A}
		\sup_{t\in [0,T]} \|A[t, \{\xi.\}]-A[t,\{\eta.\}]\|_{\D\mapsto \B}\leq c_1\sup_{t\in[0,T]}||\xi_t-\eta_t||_{\D^*},
	\end{equation}
\begin{equation}
\label{Lip_Aa}
\sup_{t\in [0,T]} \|A[t, \{\xi.\}]\|_{\D\mapsto \B}\leq c_1
\end{equation}
for a positive constant $c_1$;

(ii) for any $\{\xi_.\}\in C_{\mu}([0,T],\M(\D^*))$, let the operator curve $A[t,\{\xi_.\}]:\D\mapsto \B$ generate a strongly continuous backward propagator of bounded linear operators $U^{t,s}[\{\xi_.\}]$ in $\B$, $0 \leq t \leq s$, on the common invariant domain $\D$, such that
	\begin{equation}\label{BDD}
		||U^{t,s}[\{\xi.\}]||_{\D\mapsto \D}\leq c_2\,\, \text{and}\,\,||U^{t,s}[\{\xi.\}]||_{\B\mapsto \B}\leq c_3, \quad t\leq s,
	\end{equation}
for some positive constants $c_2,c_3$, and with their dual propagators $\tilde{U}^{s,t}[\{\xi_.\}]$ preserving the set $\M$.

Then, if
\begin{equation}
\label{eq0thkineticeq}
c_1c_2c_3 M T< 1,
\end{equation}
 the weak nonlinear non-Markovian Cauchy problem
\begin{equation}
\label{weak nonlinear Cauchy problem}
\frac{d}{dt}(f,\mu_t)=(A[t,\{\mu_s\}_{s\in[0,T]}]f, \mu_t),\quad \mu_0=\mu, \,\,t\in [0,T],
\end{equation}
is  well posed, that is for any $\mu\in \M$, it has a unique solution $\Phi^t(\mu) \in \M$ (that is (\ref{weak nonlinear Cauchy problem}) holds for all $f\in \D$) that depends Lipschitz continuously on time $t$ and the initial data in the norm of $\D^*$, i.e.
\begin{equation}
\label{eq2thkineticeq}
\|\Phi^s(\mu)-\Phi^t (\mu)\|_{\D^*}\leq  c_1 c_2 (s-t), \quad 0\le t \le s \le T,
\end{equation}
and
\begin{equation}
\label{eq1thkineticeq}
\begin{split}
\|\{\Phi^.(\mu)\}-\{\Phi^.(\eta)\}\|_{C([0,T],\D^*)}
&=\sup_{s\in [0,T]}\|\Phi^s(\mu)-\Phi^s(\eta)\|_{\D^*}\\
&\leq \frac{c_2}{1-c_1c_2c_3MT}\|\mu-\eta\|_{\D^*}.
\end{split}
\end{equation}
\end{theorem}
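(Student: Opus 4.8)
The plan is to prove well-posedness by a fixed-point argument in the complete metric space $C_{\mu}([0,T],\M(\D^*))$, where the map being iterated sends a candidate curve $\{\xi.\}$ to the solution of the \emph{linear} (frozen-coefficient) non-autonomous Cauchy problem obtained by plugging $\{\xi.\}$ into the generator. Concretely, for a fixed $\{\xi.\}$, hypothesis (ii) furnishes a backward propagator $U^{t,s}[\{\xi.\}]$ generated by $A[t,\{\xi.\}]$, and the dual propagator $\tilde U^{s,t}[\{\xi.\}]$ propagates the initial measure forward: I would \emph{define} the map $\Psi$ by
\begin{equation}
\label{eqdefPsi}
\Psi(\{\xi.\})_t := \tilde U^{t,0}[\{\xi.\}]\,\mu .
\end{equation}
Because $\tilde U^{s,t}[\{\xi.\}]$ preserves $\M$ by assumption, $\Psi$ maps $C_{\mu}([0,T],\M(\D^*))$ into itself; one checks continuity in $t$ using the generator equations \eqref{Generates} together with the uniform bound \eqref{Lip_Aa}, which directly yields the Lipschitz-in-time estimate \eqref{eq2thkineticeq} via $\|\Phi^s(\mu)-\Phi^t(\mu)\|_{\D^*}=\sup_{\|f\|_\D\le1}|(U^{s,t}f-f,\mu)|\le c_1c_2(s-t)M/M$ after using $\frac{d}{ds}U^{t,s}f=U^{t,s}L_sf$ and the bounds $c_1,c_2$.

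The core of the argument is showing $\Psi$ is a contraction on $C([0,T],\D^*)$ under the smallness condition \eqref{eq0thkineticeq}. Given two curves $\{\xi.\},\{\eta.\}$, I would estimate $\|\Psi(\{\xi.\})_t-\Psi(\{\eta.\})_t\|_{\D^*}$ by testing against $f\in\D$ and moving to the predual propagators, so that the difference becomes $(\,(U^{t,0}[\{\xi.\}]-U^{t,0}[\{\eta.\}])f,\,\mu\,)$. Here is exactly where Proposition \ref{prop-propergatorProperty} does the heavy lifting: its estimate \eqref{PropergatorPropertya} gives
\begin{equation}
\label{eqkeybound}
\|U^{t,0}[\{\xi.\}]-U^{t,0}[\{\eta.\}]\|_{\D\to\B}\le c_1c_2c_3\,T\,\sup_{s\in[0,T]}\|A[s,\{\xi.\}]-A[s,\{\eta.\}]\|_{\D\to\B},
\end{equation}
and then the Lipschitz hypothesis \eqref{Lip_A} converts the right-hand side into a multiple of $\sup_s\|\xi_s-\eta_s\|_{\D^*}$. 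Pairing against $\mu\in\M$ (so $\|\mu\|_{\B^*}\le M$, and the pairing picks up the factor $M$) produces the contraction constant $c_1c_2c_3MT<1$. The unique fixed point is the solution $\Phi^\cdot(\mu)$, and the final sensitivity estimate \eqref{eq1thkineticeq} follows by the standard stability-of-fixed-points computation: writing $\Phi^\cdot(\mu)-\Phi^\cdot(\eta)$ and separating the dependence on the initial datum (which contributes the leading $c_2\|\mu-\eta\|_{\D^*}$ term) from the dependence through the coefficients (which contributes the $c_1c_2c_3MT$ self-coupling), then solving the resulting inequality for $\sup_s\|\Phi^s(\mu)-\Phi^s(\eta)\|_{\D^*}$ to get the denominator $1-c_1c_2c_3MT$.

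The main obstacle I anticipate is \emph{bookkeeping of the two topologies}. Proposition \ref{prop-propergatorProperty} measures operator differences in the $\D\to\B$ norm, while the fixed-point contraction must be run in $\D^*$ (the closed, convex, bounded set $\M$ lives in $\B^*$ but is assumed closed in $\D^*$ as well); matching these requires care that the pairing $(f,\mu)$ with $f\in\D$ and $\mu\in\M\subset\B^*$ is controlled both by $\|f\|_\D$ (to apply \eqref{PropergatorPropertya}) and by $\|\mu\|_{\B^*}\le M$ (to extract the factor $M$). A secondary technical point is verifying the hypotheses of Proposition \ref{prop-propergatorProperty} are genuinely met for the frozen pairs — in particular that $\D$ is invariant under one of the two propagators with the uniform bound $c_2$, which is precisely \eqref{BDD}, and that both propagators share the generating relations \eqref{Generates}. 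Once the two-norm bookkeeping is set up cleanly, the contraction, the time-regularity \eqref{eq2thkineticeq}, and the initial-data sensitivity \eqref{eq1thkineticeq} all fall out of the single master estimate \eqref{PropergatorPropertya}.
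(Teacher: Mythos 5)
Your proposal follows essentially the same route as the paper's proof: the same fixed-point map $\{\xi.\}\mapsto\{\tilde U^{t,0}[\{\xi.\}]\mu\}$ on $C_{\mu}([0,T],\M(\D^*))$, the same use of Proposition \ref{prop-propergatorProperty} via duality to obtain the contraction constant $c_1c_2c_3MT$, and the same decomposition of $\Phi^\cdot(\mu)-\Phi^\cdot(\eta)$ into a coefficient term and an initial-datum term to derive \eqref{eq1thkineticeq}. The argument is correct; only the passing justification of \eqref{eq2thkineticeq} is written somewhat awkwardly (the paper simply integrates the weak equation using \eqref{Lip_Aa}), but this is cosmetic.
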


\proof

By duality
\begin{equation*}
\begin{split}
(f,(\tilde{U}^{t,0}[\{\xi_.^1\}]- \tilde{U}^{t,0}[\{\xi_.^2\}])\mu)
=((U^{0,t}[\{\xi_.^1\}]- U^{0,t}[\{\xi_.^2\}])f,\mu).
\end{split}
\end{equation*}
By Proposition \ref{prop-propergatorProperty} with $L_t^i=A[t, \{\xi^i.\}]$, together with assumptions \eqref{Lip_A} and \eqref{BDD},
\begin{equation}
\label{Lip U}
\begin{split}
&||(\tilde{U}^{t,0}[\{\xi_.^1\}]- \tilde{U}^{t,0}[\{\xi_.^2\}])\mu)||_{\B^*}\\
\leq &||U^{0,t}[\{\xi_.^1\}]- U^{0,t}[\{\xi_.^2\}]||_{\D\mapsto \B} ||\mu||_{\B^*}\\
\leq & c_1c_2c_3 tM \|\{\xi_.^1\}-\{\xi_.^2\}\|_{C([0,T],\D^*)}.
\end{split}
\end{equation}
Consequently, if \eqref{eq0thkineticeq} holds, the mapping $\{\xi_.\}\mapsto \{\tilde{U}^{t,0}[\{\xi_.\}]\}_{t\in [0,T]}$ is a contraction in $C_{\mu}([0,T],\M(\D^*))$. Hence by the contraction principle there exists a unique fixed point for this mapping and hence a unique solution to equation \eqref{weak nonlinear Cauchy problem}.

Inequality \eqref{eq2thkineticeq} follows directly from \eqref{weak nonlinear Cauchy problem}.
Finally, if $\Phi^t(\mu) = \mu_t$ and $\Phi^t(\eta) = \eta_t$, then
\begin{equation*}
\begin{split}
\mu_t-\eta_t &= \tilde{U}^{t,0}[\{\mu.\}]\mu-\tilde{U}^{t,0}[\{\eta_.\}]\eta\\
 &= (\tilde{U}^{t,0}[\{\mu.\}]-\tilde{U}^{t,0}[\{\eta.\}])\mu
  +\tilde{U}^{t,0}[\{\eta.\}](\mu-\eta).
\end{split}
\end{equation*}
From \eqref{BDD} and \eqref{Lip U},
\begin{equation}
\label{eq3thkineticeq}
\|\{\mu_.\}-\{\eta_.\}\|_{C([0,T],\D^*)} \leq  c_1c_2c_3 T M \|\{\mu_.\}-\{\eta_.\}\|_{C([0,T],\D^*)} + c_2\|\mu-\eta\|_{\D^*}
\end{equation}
implying \eqref{eq1thkineticeq}.
 \qed

\begin{remark}
Condition \eqref{eq0thkineticeq} is the analog of the condition of small
coupling (or composite gain) from \cite{HCM3}.
\end{remark}

As a consequence, we get global well-posedness for the Markovian case:

\begin{theorem}
\label{propkineticeqnonlMark}
Under the assumptions in Theorem \ref{thkineticeq}, but without the locality constraint
\eqref{eq0thkineticeq},
the Cauchy problem for kinetic equation
\begin{equation}
\label{eq1propkineticeqnonlMark}
\frac{d}{dt}(f,\mu_t)=(A[t,\mu_t]f,\mu_t),\quad \mu_s=\mu
\end{equation}
is well-posed, i.e. for any $\mu \in \MC$, $s\in [0,T]$,  it has a
unique solution $U^{t,s}(\mu) \in \MC$, $t\in [s,T]$, and the transformations $U^{t,s}$ of
$\MC$ form a propagator depending Lipschitz
continuously on time $t$ and the initial data in the norm of
$\D^*$, i.e.
\begin{equation}
\label{eqLipcontnonlinearprop}
 \|U^{t,s}(\mu)-U^{t,s}(\eta)\|_{\D^*} \le c(T) \|\mu-\eta \|_{\D^*},
\end{equation}
with a constant $c(T)$.
\end{theorem}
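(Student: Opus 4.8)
The plan is to build the global solution by concatenating the local solutions supplied by Theorem \ref{thkineticeq}, using crucially that in the genuinely Markovian case \eqref{eq1propkineticeqnonlMark} the operator $A[t,\mu_t]$ depends only on the current value $\mu_t$. First I would observe that \eqref{eq1propkineticeqnonlMark} is a special instance of the anticipating problem \eqref{weak nonlinear Cauchy problem}: writing $A[t,\{\xi.\}]:=A[t,\xi_t]$ turns the Markovian hypotheses into the conditions of Theorem \ref{thkineticeq}, since \eqref{Lip_A} follows from $\|A[t,\xi_t]-A[t,\eta_t]\|_{\D\mapsto\B}\le c_1\|\xi_t-\eta_t\|_{\D^*}\le c_1\sup_{s}\|\xi_s-\eta_s\|_{\D^*}$ and \eqref{Lip_Aa} is immediate. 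Hence Theorem \ref{thkineticeq} is available on every subinterval, the sole restriction being the locality bound on its length.

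Next I would fix a step $\tau>0$ with $c_1c_2c_3M\tau<1$ and a partition $s=t_0<t_1<\cdots<t_n=T$, $t_{k+1}-t_k\le\tau$, $n=\lceil (T-s)/\tau\rceil$. Theorem \ref{thkineticeq} applied on $[t_0,t_1]$ (with $T$ there replaced by $t_1-t_0$) gives a unique solution starting from $\mu\in\MC$, whose value at $t_1$ again lies in $\MC$ because the dual propagators preserve $\MC$. Taking this value as the new datum and repeating on $[t_1,t_2]$, and so on, I would obtain after $n$ steps a solution on all of $[s,T]$; this defines $U^{t,s}(\mu)\in\MC$. Uniqueness follows by induction on the subintervals: two global solutions coincide on $[t_0,t_1]$ by local uniqueness, hence carry the same datum at $t_1$, hence coincide on $[t_1,t_2]$, and so forth.

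The propagator identity $U^{t,r}=U^{t,s}U^{s,r}$ for $r\le s\le t$ is then a consequence of uniqueness combined with the Markovian structure. If $\{\mu_\theta\}_{\theta\in[r,t]}$ solves \eqref{eq1propkineticeqnonlMark} with $\mu_r=\mu$, then, since the generator at time $\theta$ sees only $\mu_\theta$, the restriction $\{\mu_\theta\}_{\theta\in[s,t]}$ solves the same equation with datum $\mu_s=U^{s,r}(\mu)$; by uniqueness it equals $\{U^{\theta,s}(\mu_s)\}_\theta$, so $U^{t,r}(\mu)=\mu_t=U^{t,s}(U^{s,r}(\mu))$. This restriction property is precisely where the Markovian (non-anticipating) hypothesis is used, and it is exactly what is unavailable in the general setting of Theorem \ref{thkineticeq}; I regard it as the conceptual crux of the proof.

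Finally, for the Lipschitz dependence I would iterate estimate \eqref{eq1thkineticeq}. On one subinterval of length at most $\tau$ it gives, for the endpoint map in the norm $\|\cdot\|_{\D^*}$, the Lipschitz constant $K:=c_2/(1-c_1c_2c_3M\tau)$. Composing over the $n$ subintervals the constants multiply, so \eqref{eqLipcontnonlinearprop} holds with $c(T)=K^{\lceil T/\tau\rceil}$, finite, independent of $\mu,\eta$ and uniform in $s$. Beyond this the argument is routine bookkeeping; the only point demanding attention is to keep $\tau$ strictly below $1/(c_1c_2c_3M)$ so that $K$ is finite on every subinterval, which for finite $T$ costs only finitely many steps.
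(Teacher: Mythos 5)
Your proposal is correct and takes essentially the same route as the paper, whose own proof consists of the single remark that the global solution is ``constructed by extending local solutions via iterations, as is routinely performed in the theory of ODE''; your write-up simply supplies the details (reduction to Theorem \ref{thkineticeq}, concatenation over subintervals of length below $1/(c_1c_2c_3M)$, the propagator identity via the Markovian restriction property, and the composed Lipschitz constant $K^{\lceil T/\tau\rceil}$), all of which are what the paper intends.
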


\begin{proof}
Global solution is constructed by extending local solutions via iterations, as is
routinely performed in the theory of ODE.
\end{proof}

It is not difficult to extend this result to non-anticipating equations yielding the following.

\begin{theorem}
\label{thkineticeqnonanticipating}
Under the assumptions in Theorem \ref{thkineticeq}, but without the locality constraint
\eqref{eq0thkineticeq},
the non-anticipating nonlinear Cauchy problem
\begin{equation}
\label{weak nonlinear Cauchy problem global NE}
\frac{d}{dt}(f,\mu_t)=(A[t,\{\mu_{\leq t}\}]f, \mu_t),\, \mu_0=\mu,\,t\geq 0,
\end{equation}
is well posed in $\M$ and its unique solution depends Lipschitz continuously on initial data in the norm of $\D^*$.
\end{theorem}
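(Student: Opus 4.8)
Proof proposal.

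The plan is to obtain the global statement from the local result of Theorem \ref{thkineticeq} by a causal time-stepping argument, exactly as one extends local solutions of an ODE. The feature that makes this work for the non-anticipating equation \eqref{weak nonlinear Cauchy problem global NE} --- and that would fail for a genuinely forecasting equation --- is \emph{causality}: since $A[t,\{\mu_{\leq t}\}]$ reads only the past $\{\mu_s:0\le s\le t\}$, the restriction to a subinterval $[0,\sigma]$ of any solution is itself a solution of the same equation on $[0,\sigma]$ and is completely determined by $\mu_0$, independently of how the curve is continued beyond $\sigma$. This is what lets me build the solution one slab at a time, something impossible for the anticipating equations of Theorem \ref{thkineticeq}.

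Concretely, I would fix a single step length $\tau>0$ with $c_1c_2c_3M\tau<1$; because the constants $c_1,c_2,c_3,M$ are uniform, the same $\tau$ serves on every slab, so $[0,T]$ is covered by the finitely many intervals $I_k=[k\tau,(k+1)\tau]$, $k=0,\dots,\lceil T/\tau\rceil-1$. On $I_0$ the operator $A[t,\{\mu_{\leq t}\}]$ never sees beyond $[0,\tau]$, so it defines an admissible anticipating operator on $C_\mu([0,\tau],\M(\D^*))$ satisfying (i)--(ii) with the same constants, and Theorem \ref{thkineticeq} (whose locality hypothesis \eqref{eq0thkineticeq} now holds with $T$ replaced by $\tau$) produces a unique solution valued in $\M$, with the bounds \eqref{eq2thkineticeq}, \eqref{eq1thkineticeq}. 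For the inductive step, assuming the solution is already known and unique on $[0,k\tau]$, I freeze this past and introduce the shifted operator
\[
\hat A[t,\{\nu.\}]:=A\big[t,\{\mu_{\leq k\tau}\}\cup\{\nu_s:k\tau<s\le t\}\big]
\]
on curves $\{\nu.\}\in C_{\mu_{k\tau}}(I_k,\M(\D^*))$, with $\mu_{k\tau}\in\M$ a valid initial datum. Freezing the past leaves the bound \eqref{Lip_Aa} and the Lipschitz modulus \eqref{Lip_A} in the free variable unchanged (the two combined trajectories agree on $[0,k\tau]$, so their $\D^*$-distance equals that of the free parts), and the relevant dual propagators still preserve $\M$; hence $\hat A$ again satisfies (i)--(ii) with the same $c_1,c_2,c_3$, and Theorem \ref{thkineticeq} applied on $I_k$ gives a unique continuation, again valued in $\M$. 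Concatenating the slabs yields a solution on all of $[0,T]$, and uniqueness propagates from slab to slab by the local uniqueness of Theorem \ref{thkineticeq}.

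The only genuinely new estimate --- and the main technical point --- is the global Lipschitz dependence on the initial data, because on a later slab two solutions started at $\mu$ and $\eta$ solve \emph{different} shifted problems: their frozen pasts $\{\mu_{\leq k\tau}\}$ and $\{\eta_{\leq k\tau}\}$ differ, so \eqref{eq1thkineticeq} cannot be quoted directly. I would compare the two slab-solutions through the identity used in the proof of Theorem \ref{thkineticeq}, now picking up two contributions: the difference of the new initial data $\mu_{k\tau}-\eta_{k\tau}$, controlled by \eqref{eq1thkineticeq}, and the difference of the operators along the two curves, which by \eqref{Lip_A} is bounded in $\D\mapsto\B$ norm by $c_1\sup_{s\le t}\|\mu_s-\eta_s\|_{\D^*}$ and is converted into a bound on the propagator difference via Proposition \ref{prop-propergatorProperty} and \eqref{PropergatorPropertya}, exactly as \eqref{Lip U} was derived. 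Writing $m_k:=\sup_{0\le s\le k\tau}\|\mu_s-\eta_s\|_{\D^*}$ and $S_k:=\sup_{t\in I_k}\|\mu_t-\eta_t\|_{\D^*}$, this produces a recursion of the form $S_k\le c_2 m_k+q\,\max(m_k,S_k)$ with $q:=c_1c_2c_3M\tau<1$; absorbing the $S_k$ on the right gives $m_{k+1}\le\kappa\,m_k$ with $\kappa=\max\{1,\,c_2/(1-q)\}\ge 1$. Iterating over the $\lceil T/\tau\rceil$ slabs yields $\sup_{t\in[0,T]}\|\mu_t-\eta_t\|_{\D^*}\le\kappa^{\lceil T/\tau\rceil}\|\mu-\eta\|_{\D^*}$, the asserted Lipschitz bound. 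Everything outside this recursion is a routine causal continuation of Theorem \ref{thkineticeq}, so I expect the handling of the frozen-past operator difference in this last estimate to be the only place requiring genuine care.
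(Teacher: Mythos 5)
Your proposal is correct, but it follows a genuinely different route from the paper. The paper runs a single global Picard iteration $\xi^n_t=\tilde U^{t,0}[\xi^{n-1}_{\leq t}]\mu$ on all of $[0,T]$ at once: causality turns the basic contraction estimate into the integral recursion $\sup_{r\leq t}\|\xi^n_r-\xi^{n-1}_r\|_{\D^*}\leq c_1c_2c_3M\int_0^t\sup_{r\leq s}\|\xi^{n-1}_r-\xi^{n-2}_r\|_{\D^*}\,ds$, whence the summable bound $\frac{1}{n!}(c_1c_2c_3Mt)^n$ and convergence without any smallness of $T$; uniqueness and the Lipschitz dependence then come from a Gronwall version of \eqref{eq3thkineticeq}, with the clean constant $c_2e^{c_1c_2c_3MT}$. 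You instead iterate the local Theorem~\ref{thkineticeq} over slabs of length $\tau$ with $c_1c_2c_3M\tau<1$, freezing the already-determined past, and you correctly identify and resolve the one non-routine point: on a later slab the two solutions solve different frozen-past problems, so \eqref{eq1thkineticeq} cannot be quoted and one needs the two-term splitting (initial-data difference plus operator difference via \eqref{Lip_A} and Proposition~\ref{prop-propergatorProperty}), yielding the recursion $S_k\leq c_2m_k+q\max(m_k,S_k)$ and the geometric constant $\kappa^{\lceil T/\tau\rceil}$. Both arguments exploit causality in the same essential way; the paper's version buys a sharper, more explicit Lipschitz constant and avoids the bookkeeping of shifted operators, while yours makes the reuse of the local theorem and the causal structure completely explicit and would adapt verbatim to any situation where only a local well-posedness statement is available.
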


\proof
For a $\mu \in \MC$, let us construct an approximating sequence $\{\xi^n_.\} \in C([0,T],\MC(\D^*))$,
 $n=0,1,\cdots$, by defining $\xi_t^0=\mu$ for $t\in [0,T]$  and then recursively
\[
\xi^n_t=\tilde {U}^{t,0}[\xi^{n-1}_{\leq t}]\mu, \quad \forall t\in[0,T].
\]
By non-anticipation, arguing as in the proof of \eqref{Lip U} above, we first get the estimate
\[
\sup_{0\leq r\leq t}||\xi^1_{r}-\xi^0_{r}||_{\D^*}\leq c_1c_2c_3Mt,
\]
and then recursively
\[
\sup_{0\leq r\leq t}||\xi^{n}_{r}-\xi^{n-1}_{r}||_{\D^*}\leq c_1c_2c_3M
\int_0^t \sup_{0\leq r\leq s}||\xi^{n-1}_{ r}-\xi^{n-2}_{ r}||_{\D^*}\,ds
\]
that implies (by straightforward induction) that, for all $t\in [0,T]$,
\[
\|\{\xi^{n}_.\}-\{\xi^{n-1}_.\}\|_{C([0,t],\D^*)}
=\sup_{0\leq r\leq t}||\xi^{n}_{r}-\xi^{n-1}_{r}||_{\D^*} \le \frac{1}{n!}(c_1c_2c_3 Mt)^n.
\]
Hence, the partial sums on the r.h.s. of the obvious equation
\[
\xi^n_{\leq t}=(\xi^n_{\leq t}-\xi^{n-1}_{\leq t})+\cdots+(\xi^{1}_{\leq t}-\xi^{0}_{\leq t})+\xi^{0}_{\leq t}
\]
converge, and thus the sequence $\xi_{\cdot}^n$ converges in $C([0,T],\D^*)$. The limit is clearly a solution to
\eqref{weak nonlinear Cauchy problem global NE}.

To prove uniqueness and continuous dependence on the initial condition,
let us assume that $\mu_t$ and $\eta_t$ are some solutions with the initial conditions
$\mu$ and $\eta$ respectively. Instead of \eqref{eq3thkineticeq}, we now get
\[
\|\{\mu_.\}-\{\eta_.\}\|_{C([0,t],\D^*)} \leq  c_1c_2c_3 M \int_0^t \|\mu_.-\eta_.\|_{C([0,s],\D^*)}\, ds
 + c_2\|\mu-\eta\|_{\D^*}.
\]
By Gronwall's lemma, this implies
\[
\|\{\mu_.\}-\{\eta_.\}\|_{C([0,t],\D^*)} \le c_2\|\mu-\eta\|_{\D^*} e^{c_1c_2c_3 Mt}, \quad t\in[0,T]
\]
yielding uniqueness and Lipshitz continuity of solutions with respect to initial data.
\qed

For the general anticipating kinetic equations, we  have only the existence result.

\begin{theorem}[global existence of the solution for general anticipating case]
\label{thglobalexistgenkin}
Under the assumptions in Theorem \ref{thkineticeq}, but without the locality constraint
\eqref{eq0thkineticeq}, assume additionally that for any $t$ from a dense subset of $[0,T]$, the set
\begin{equation}
\label{eq1thglobalexistgenkin}
\{\tilde{U}^{t,0}[\{\xi.\}]\mu:\,\,\{\xi.\} \in C_\mu([0,T],\MC(\D^*))\}
\end{equation}
 is relatively compact in $\M$.

Then a solution to the nonlinear Cauchy problem
\begin{equation}
\label{weak nonlinear Cauchy problem-GE}
\frac{d}{dt}(f,\mu_t)=(A[t,\{\mu_s\}_{s\in[0,T]}]f, \mu_t),\quad \mu_0=\mu, t\geq 0,
\end{equation}
exists in $\M$.
\end{theorem}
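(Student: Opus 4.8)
The plan is to recast the Cauchy problem as a fixed point problem and, since the smallness condition \eqref{eq0thkineticeq} is no longer available, to replace the contraction argument of Theorem \ref{thkineticeq} by a compactness (Schauder) argument, for which the newly added hypothesis \eqref{eq1thglobalexistgenkin} is precisely designed. Concretely, on the set $K:=C_\mu([0,T],\MC(\D^*))$ I would define the mapping
\[
\Psi(\{\xi.\})_t:=\tilde U^{t,0}[\{\xi.\}]\mu, \qquad t\in[0,T].
\]
Any fixed point $\{\mu.\}=\Psi(\{\mu.\})$ satisfies $\mu_t=\tilde U^{t,0}[\{\mu.\}]\mu$, and then the generation property \eqref{Generates} (in dual form) shows that $\{\mu.\}$ solves \eqref{weak nonlinear Cauchy problem-GE}; so it suffices to produce a fixed point of $\Psi$ in $K$.

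First I would check that $K$ is a nonempty, convex, bounded and closed subset of the Banach space $C([0,T],\D^*)$. It is nonempty (the constant curve $\xi_t\equiv\mu$ lies in it), convex and bounded because $\MC$ is convex and bounded (note $\|\cdot\|_{\D^*}\le\|\cdot\|_{\B^*}\le M$ on $\MC$, since $\|\cdot\|_\D\ge\|\cdot\|_\B$), and closed because $\MC$ is closed in $\D^*$ and evaluation at $t=0$ is continuous. Next, $\Psi$ maps $K$ into $K$: the image curve starts at $\mu$, stays in $\MC$ since the dual propagators $\tilde U^{t,0}[\{\xi.\}]$ preserve $\M$ by hypothesis (ii), and is Lipschitz in $t$ with a constant uniform over $\{\xi.\}\in K$, because for $f\in\D$ and $t\le s$,
\[
|(f,\tilde U^{s,0}[\{\xi.\}]\mu)-(f,\tilde U^{t,0}[\{\xi.\}]\mu)|
=\Big|\int_t^s (A[r,\{\xi.\}]f,\tilde U^{r,0}[\{\xi.\}]\mu)\,dr\Big|
\le c_1 M (s-t)\|f\|_\D,
\]
using \eqref{Lip_Aa} together with $\|\tilde U^{r,0}[\{\xi.\}]\mu\|_{\B^*}\le M$; hence $\|\Psi(\{\xi.\})_s-\Psi(\{\xi.\})_t\|_{\D^*}\le c_1M(s-t)$. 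Continuity of $\Psi$ as a self-map of $K$ is immediate from estimate \eqref{Lip U} read in the weaker norm (recall $\|\cdot\|_{\D^*}\le\|\cdot\|_{\B^*}$), which in fact shows $\Psi$ is globally Lipschitz on $K$ with constant $c_1c_2c_3TM$.

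The heart of the argument is the relative compactness of $\Psi(K)$ in $C([0,T],\D^*)$, which I would obtain from the Arzel\`a--Ascoli theorem, with values in the complete metric space $\MC\subset\D^*$. The uniform time-Lipschitz bound just displayed gives equicontinuity of the family $\{\Psi(\{\xi.\}):\{\xi.\}\in K\}$. For pointwise relative compactness, hypothesis \eqref{eq1thglobalexistgenkin} provides, for every $t$ in a dense subset of $[0,T]$, relative compactness in $\MC$ of $\{\Psi(\{\xi.\})_t:\{\xi.\}\in K\}$; equicontinuity then upgrades this to every $t\in[0,T]$ (approximate a given $t$ by a dense one and invoke total boundedness). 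Thus $\overline{\Psi(K)}$ is a compact subset of the closed convex set $K$, and Schauder's fixed point theorem applied to the continuous self-map $\Psi$ yields a fixed point, i.e. a solution of \eqref{weak nonlinear Cauchy problem-GE} in $\MC$.

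The main obstacle is exactly this compactness step: the contraction mechanism of Theorem \ref{thkineticeq} is gone once \eqref{eq0thkineticeq} is dropped, so existence must rest entirely on \eqref{eq1thglobalexistgenkin}, and the care lies in matching the two topologies ($\D^*$ versus $\B^*$) and in bootstrapping the pointwise compactness, assumed only on a dense set of times, to the full Arzel\`a--Ascoli hypothesis via equicontinuity. I emphasize that this route produces existence only; uniqueness and continuous dependence are neither expected nor claimed, precisely because Schauder's theorem, unlike the Banach contraction principle, carries no uniqueness.
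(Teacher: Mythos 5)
Your proposal is correct and follows essentially the same route as the paper's own proof: the same fixed-point map $\{\xi.\}\mapsto\{\tilde U^{t,0}[\{\xi.\}]\mu\}$ on $C_\mu([0,T],\MC(\D^*))$, Lipschitz continuity from \eqref{Lip U}, relative compactness of the image via Arzel\`a--Ascoli (combining the uniform time-Lipschitz bound \eqref{eq2thkineticeq} with the pointwise compactness hypothesis \eqref{eq1thglobalexistgenkin}), and Schauder's theorem. You merely spell out a few steps the paper leaves implicit, such as upgrading pointwise compactness from the dense set of times to all of $[0,T]$ and verifying that a fixed point solves \eqref{weak nonlinear Cauchy problem-GE}.
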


\proof
Since $\M$ is convex, the space $C_{\mu}([0,T], \M(\D^*))$ is also convex. Since the dual operators $\tilde{U}^{t,0}[\{\xi.\}]$ preserve the set $\M$, for any $\{\xi.\} \in  C_{ \mu}([0,T], \M(\D^*))$, the curve $\tilde{U}^{t,0}[\{\xi.\}]\mu$ belongs to $C_{ \mu}([0,T], \M(\D^*))$ as a function of $t$.
Hence, the mapping $\{\xi.\}\to \{\tilde{U}^{t,0}[\{\xi.\}]\mu, t\in [0,T]\}$ is from $C_{ \mu}([0,T], \M(\D^*))$ to itself. Moreover, by (\ref{Lip U}), this mapping is Lipschitz continuous.

Denote $\hat{C}=\{ \{\tilde{U}^{\cdot,0}[\{\xi.\}]\mu\}:  \{\xi.\}\in C_{\mu}([0,T], \M(\D^*))\}$. Together with (\ref{eq2thkineticeq}), the assumption that set \eqref{eq1thglobalexistgenkin} is compact in $\M$ for any $t$ from a dense subset of $[0,T]$ implies that the set $\hat{C}$ is compact (Arzela-Ascoli Theorem, see A.21 in \cite{K1997}).

Finally, by Schauder fixed point theorem, there exists a fixed point in $\hat C\subset C_{\mu}([0,T], \M(\D^*))$, which gives the existence of a solution to (\ref{weak nonlinear Cauchy problem-GE}).
\qed

In case of Markov processes, the compactness assumption \eqref{eq1thglobalexistgenkin} turns out to hold under very general conditions, see below Proposition \ref{propmomentcomp} that covers all our examples.

\section{Nonlinear Markov processes in $\R^d$}
\label{secnonlinMark}

This section is designed to provide a probabilistic interpretation for nonlinear Markov evolution $\mu_t$ solving kinetic equation (\ref{eq1propkineticeqnonlMark}).
  Here we take $\B=C_{\infty}(\R^d)$ and $\M=\P(\R^d)$, so that $\B^*=\mathbf{M}^{sign}(\R)$ is the set of Borel measures on $\R^d$,
so that $M=\sup_{\mu \in \P(\R^d)} \|\mu \|_{\B^*}=1$.

Let $A[t,\mu]$, $t\geq 0$, $\mu\in \P(\R^d)$, be a family of operators in $C_{\infty}(\R^d)$ of the L\'evy-Khintchin type \eqref{fellergenerator}, that is
\begin{equation}
\label{fellergenerator 3}
\begin{split}
&A[t,\mu]f(z)=\frac{1}{2}(G(t,z,\mu)\nabla,\nabla)f(z)+ (b(t,z,\mu),\nabla f(z))\\
 &+\int (f(z+y)-f(z)-(\nabla f (z), y){\bf 1}_{B_1}(y))\nu (t,z,\mu,dy),
\end{split}
\end{equation}
such that each of them generates a Feller process with one and the same domain $\D$ such that $C^2_{\infty}(\R^d) \subseteq \D\subseteq C^1_{\infty}(\R^d)$.

\begin{definition}\label{NMP}
A nonlinear Markov process on $\R^d$ specified by a family  of generators  $\{A[t,\mu]: t\geq 0, \mu\in  \P(\R^d)\}, 0\leq s\leq t$ is a family of processes $\{X_{s,t}^{\mu}: \mu\in \P(\R^d)\}$
defined on a certain filtered probability space $(\Om, \FC, \{\FC_t\},\P(\R^d))$ that solve the nonlinear martingale problem, specified by the family $\{A[t,\mu]\}$, that is for $f\in \D$,
\begin{equation}
\label{eqdefNMP}
f(X_{s,t}^{\mu})-\int^t_s A[\tau, \LC(X_{s,\tau}^{\mu})]f(X_{s,\tau}^{\mu})d\tau, \quad s\leq t
\end{equation}
is a martingale and  $\LC(X_{s,s}^{\mu})=\mu$, where $\LC (\xi)$ denotes the distribution of a random variable $\xi$.
\end{definition}

\begin{remark}
If the operators $A$ do not depend on $t$ explicitly, i.e. $A[t,\mu]=A[\mu]$, $\{X_{s,t}^{\mu}\}$ is a time-homogeneous nonlinear Markov process.
If the operators $A$ do not depend on $\mu$, i.e. $A[t,\mu]=A[t]$, $\{X_{s,t}^{\mu}\}$ is a classical (linear) Markov process.
If the operators $A$ depend neither on $t$ nor on $\mu_t$, $\{X_{s,t}^{\mu}\}$ is a time-homogeneous Markov process.
\end{remark}

\begin{prop}\label{Prop31}

(i) For any nonlinear Markov process $\{X_{s,t}^\mu\}$, its marginal distributions $\mu_t$ solve \eqref{eq1propkineticeqnonlMark}.

(ii) Suppose the assumptions of Theorem \ref{propkineticeqnonlMark} are fulfilled with generators $A[t,\mu]$ of type (\ref{fellergenerator 3}). Then it is possible to construct a nonlinear Markov process
$\{X_{s,t}^{\mu}\}$, with $\mu_t=\LC(X_{s,t}^{\mu})$ solving the
Cauchy problem for equation \eqref{eq1propkineticeqnonlMark} with initial condition $\mu$.
\end{prop}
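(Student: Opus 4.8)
For part (i) I would argue by taking expectations in the martingale characterisation. Fix $\mu\in\P(\R^d)$ and $s\ge 0$, and write $\mu_t=\LC(X_{s,t}^\mu)$. For each $f\in\D$ the process in \eqref{eqdefNMP} is a martingale, so its expectation is constant in $t$ and equals its value at $t=s$, namely $\E[f(X_{s,s}^\mu)]=(f,\mu)$. Since $A[\tau,\mu_\tau]$ is uniformly bounded as an operator $\D\mapsto\B$ (hypothesis \eqref{Lip_Aa}) and the $\mu_\tau$ are probability measures, Fubini's theorem lets me interchange $\E$ and $\int_s^t$, yielding the integral identity
\[
(f,\mu_t)=(f,\mu)+\int_s^t \big(A[\tau,\mu_\tau]f,\mu_\tau\big)\,d\tau .
\]
As $\tau\mapsto (A[\tau,\mu_\tau]f,\mu_\tau)$ is continuous, the fundamental theorem of calculus gives $\frac{d}{dt}(f,\mu_t)=(A[t,\mu_t]f,\mu_t)$, i.e. $\{\mu_t\}$ solves \eqref{eq1propkineticeqnonlMark}.

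For part (ii) the plan is to construct the process by freezing the measure argument along the known solution. First I would invoke Theorem \ref{propkineticeqnonlMark} to produce the unique solution $\{\mu_t\}_{t\ge s}$, $\mu_t=U^{t,s}(\mu)\in\P(\R^d)$, of the Cauchy problem for \eqref{eq1propkineticeqnonlMark}. Setting $\hat A_\tau:=A[\tau,\mu_\tau]$, I obtain a time-inhomogeneous family of \emph{linear} L\'evy--Khintchine generators. By the hypotheses inherited from Theorem \ref{thkineticeq}, this frozen family generates, in the sense of \eqref{Generates}, a strongly continuous backward Feller propagator $U^{\tau,t}$ on $C_\infty(\R^d)$ with the common invariant domain $\D$, whose dual $\tilde U^{t,\tau}$ preserves $\P(\R^d)$. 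Being conservative and positivity-preserving, $U^{\tau,t}$ determines a transition function $P(\tau,x;t,dy)$ through $U^{\tau,t}f(x)=\int f(y)\,P(\tau,x;t,dy)$, and the standard construction of a Markov process from a Feller transition function (Kolmogorov extension) yields a time-inhomogeneous Markov process $\{X_{s,t}^\mu\}_{t\ge s}$ on a filtered probability space with these transitions and $\LC(X_{s,s}^\mu)=\mu$.

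It then remains to verify the two defining properties. For the marginals I would use duality:
\[
\big(f,\LC(X_{s,t}^\mu)\big)=\int \mu(dx)\,(U^{s,t}f)(x)=(U^{s,t}f,\mu)=(f,\tilde U^{t,s}\mu),
\]
so $\LC(X_{s,t}^\mu)=\tilde U^{t,s}[\{\mu.\}]\mu$; but the fixed-point characterisation of the solution (exactly as in the proof of Theorem \ref{thkineticeq}) reads $\mu_t=\tilde U^{t,s}[\{\mu.\}]\mu$, whence $\LC(X_{s,t}^\mu)=\mu_t$. Consequently $\hat A_\tau=A[\tau,\mu_\tau]=A[\tau,\LC(X_{s,\tau}^\mu)]$, and the \emph{linear} martingale problem solved by $X$ for the generator family $\hat A_\tau$ — a standard consequence of the Markov property together with \eqref{Generates} — coincides term-by-term with the nonlinear martingale problem \eqref{eqdefNMP}. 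This identifies $\{X_{s,t}^\mu\}$ as a nonlinear Markov process with the required marginals.

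The main obstacle is the middle step of (ii): passing from the analytic propagator supplied by the hypotheses to a genuine stochastic process. Concretely, one must verify that $U^{\tau,t}$ is conservative and positivity-preserving, so that $P(\tau,x;t,\cdot)$ is an honest probability kernel, and then deduce the (linear) martingale property from the generation relation \eqref{Generates}. This is precisely where the Feller and L\'evy--Khintchine structure assumed for the generators is essential, whereas the matching of marginals and the identification of the two martingale problems reduce to the duality identity and the uniqueness already established in Theorem \ref{propkineticeqnonlMark}.
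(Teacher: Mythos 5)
Your proposal is correct and follows essentially the same route as the paper: part (i) by taking expectations in the martingale property to obtain the integral identity and then differentiating, and part (ii) by freezing the measure argument along the solution of the kinetic equation supplied by Theorem \ref{propkineticeqnonlMark} and realising the resulting time-inhomogeneous linear propagator as an ordinary Markov process whose marginals match $\mu_t$ by duality. The only difference is that you spell out the construction of the process from the frozen propagator and the identification of the two martingale problems, steps the paper compresses into a single ``clearly''.
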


\proof
(i) Since \eqref{eqdefNMP} is a martingale,
\[
\E f(X_{s,t}^{\mu})=\E f(X_{s,s}^{\mu}) +\int^t_s A[\tau, \LC(X_{s,\tau}^{\mu})] \E f(X_{s,\tau}^{\mu})d\tau, \quad s\leq t.
\]
Hence, for $\mu_t=\LC(X_{s,t}^{\mu})$ we have
\[
(f,\mu_t)=(f,\mu_s)+\int_s^t (A[\tau, \mu_{\tau}]f, \mu_{\tau}) \, d\tau.
\]
Consequently, we firstly conclude that $\mu_t$ depends continuously on $t$ in $\D^*$, and secondly obtain
\eqref{eq1propkineticeqnonlMark} by differentiating this integral equation.

(ii) By the assumptions of Theorem \ref{propkineticeqnonlMark}, a solution $\mu_t\in\P(\mathbf{R}^d)$
 of equation \eqref{eq1propkineticeqnonlMark} with initial condition $\mu_s=\mu$ specifies a propagator $\tilde{U}^{t,r}[\mu_.]$, $s\leq r\leq t$,  of linear transformations in $\B^*$,
 solving the Cauchy problems for equation
\begin{equation}\label{ODE4}
	\frac{d}{dt}(f,\nu_t)=(A[t, \mu_t]f,\nu_t).
\end{equation}
In its turn, for any $\nu \in \P(\R^d)$, equation \eqref{ODE4} specifies marginal distributions of a usual (linear) Markov process $\{X_{s,t}^{\mu}(\nu)\}$ in $\mathbf{R}^d$
with the initial measure $\nu$. Clearly, the process $\{X_{s,t}^{\mu}(\mu)\}$ is a solution to our martingale problem.
\qed

\begin{remark}
\label{tangentprocess}
The proof of statement (ii) suggests that one can look at a nonlinear Markov process as a family
of solutions $\mu_t$ to a nonlinear kinetic equation, so that to each solution $\mu_t$ there is an attached usual (so-to-say, tangent) Markov process, $\{X_{s,t}^{\mu}(\nu)\}$, specified by equation \eqref{ODE4}.
In Section \ref{secepequilibrium} we shall see how these tangent processes appear in the context of interacting
particles as processes followed by tagged particles, see Remark \ref{remtaggedtangentprocess}.
\end{remark}

Using martingales allows us to prove the following useful regularity property for the solution of kinetic equations.
\begin{prop}
\label{proppmomentsfornonlin}
Suppose the assumptions of Theorem \ref{propkineticeqnonlMark} are fulfilled for a (Markovian) equation of type \eqref{eq1propkineticeqnonlMark} with generators
$A[t,\mu]$ of type (\ref{eq1propkineticeqnonlMark}). Let $\{X_{s,t}^\mu\}$ denote a nonlinear Markov process constructed from the family of generators $A[t,\mu]$ by Proposition \ref{Prop31}.
Assume, for $p \in (0,2]$ and $P>0$, the following boundedness condition holds:
\begin{equation}
\label{EBDD}
\sup_{x\in R^d,\,t\geq0, \,\mu\in\M}\max\big\{|G(t,x,\mu)|,|b(t,x,\mu)|,
\int \min (|y|^2, |y|^p) \nu(t,x,\mu,dy)\big\} \le P,
\end{equation}
and the initial measure $\mu_s=\mu$ has a finite $p$th order moment, i.e.
\[
\int |x|^p \mu (dx)=p_{\mu} <\infty.
\]

Then the distributions $\LC(X_{s,t}^{\mu}) =\Phi^{t,s}(\mu)$, solving the Cauchy problem for equation \eqref{eq1propkineticeqnonlMark} with initial condition $\mu_s$ have uniformly bounded $p$th moments, i.e.
\begin{equation}
\label{eq1proppmomentsfornonlin}
\int |x|^p \Phi^{t,s}(\mu) (dx) \le c(T,P)[1+p_{\mu}],
\end{equation}
and are
$\frac{1}{2}$-H\"older continuous with respect to $t$ in the space $(C_{Lip}(\R^d))^*$, i.e.
\begin{equation}
\label{Holder}
||\Phi^{t_1,s}(\mu)-\Phi^{t_2,s}(\mu)||_{(C_{Lip}(\R^d))^*}\leq c(T,P) \sqrt{|t_1-t_2|},
\quad \forall t_1, t_2\geq s \geq 0,
\end{equation}
with a positive constant $c$.
\end{prop}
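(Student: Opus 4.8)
The plan is to read off both assertions from the martingale characterisation of $X_{s,t}^{\mu}$ in \eqref{eqdefNMP} (Dynkin's formula) combined with uniform estimates of the generator \eqref{fellergenerator 3} applied to suitable test functions. For the moment bound \eqref{eq1proppmomentsfornonlin} the natural test functions are the smooth weights $f_p(x)=(1+|x|^2)^{p/2}$; for the H\"older bound \eqref{Holder} they are smoothed, spatially truncated versions of $1$-Lipschitz functions. In both cases assumption \eqref{EBDD} is used to control the Gaussian, drift and L\'evy pieces of $A[t,\mu]$ uniformly in $(x,t,\mu)$, with the small-jump part $\{|y|<1\}$ absorbed by $\int_{|y|<1}|y|^2\,\nu(dy)\le P$ and the large-jump part $\{|y|\ge1\}$ by $\int_{|y|\ge1}|y|^p\,\nu(dy)\le P$ (note the latter also gives $\int_{|y|\ge1}\nu(dy)\le P$).

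For the moments I would first establish the one-sided differential inequality $A[t,\mu]f_p(x)\le c(P)\,(1+f_p(x))$, uniformly in $t,\mu$. For $p\le2$ the Hessian $\nabla^2 f_p$ is bounded and $|\nabla f_p(x)|\le c\,(1+f_p(x))$, so the Gaussian and drift terms are bounded by $c(P)(1+f_p)$; in the integral term a second-order Taylor estimate on $\{|y|<1\}$ gives a contribution $\le c\,\|\nabla^2 f_p\|_\infty\int_{|y|<1}|y|^2\,\nu(dy)\le c(P)$, while on $\{|y|\ge1\}$ the sub-additivity $(a+b)^{p/2}\le a^{p/2}+b^{p/2}$ (valid for $p\le2$) yields $f_p(x+y)-f_p(x)\le c\,f_p(x)+c|y|^p$, whose $\nu$-integral is $\le c(P)(1+f_p(x))$. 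Plugging $f_p$ into \eqref{eqdefNMP}, taking expectations, and setting $g(t)=\E f_p(X_{s,t}^{\mu})$ gives $g(t)\le g(s)+c(P)\int_s^t(1+g(\tau))\,d\tau$; since $g(s)=(f_p,\mu)\le1+p_\mu$, Gronwall's lemma yields $g(t)\le c(T,P)(1+p_\mu)$, which is \eqref{eq1proppmomentsfornonlin}.

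For the H\"older bound I would use $\|\Phi^{t_1,s}(\mu)-\Phi^{t_2,s}(\mu)\|_{(C_{Lip}(\R^d))^*}=\sup\{|(f,\mu_{t_2}-\mu_{t_1})|:\|f\|_{Lip}\le1\}$, fix such an $f$, and replace it by $f_{\epsilon,R}=(f*\phi_\epsilon)\chi_R\in C^2_\infty(\R^d)\subseteq\D$, where $\phi_\epsilon$ is a mollifier of width $\epsilon$ and $\chi_R$ a smooth cut-off equal to $1$ on $|x|\le R$. Here $\|\nabla f_{\epsilon,R}\|_\infty\le c$ and $\|\nabla^2 f_{\epsilon,R}\|_\infty\le c/\epsilon$ for $R\ge1$, so the same splitting as above gives $\|A[t,\mu]f_{\epsilon,R}\|_\infty\le c(P)/\epsilon$, and the integral form of \eqref{eq1propkineticeqnonlMark} gives $|(f_{\epsilon,R},\mu_{t_2}-\mu_{t_1})|\le (c(P)/\epsilon)\,|t_2-t_1|$. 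The mollification error is $\le c\epsilon$ and the truncation error is $\le \mu_{t_2}(|x|\ge R)+\mu_{t_1}(|x|\ge R)$, which tends to $0$ as $R\to\infty$ because the $\mu_{t_i}$ are probability measures. Letting $R\to\infty$ and then optimising $\epsilon=\sqrt{|t_2-t_1|}$ produces the rate $c(T,P)\sqrt{|t_2-t_1|}$ uniformly in $f$, which is \eqref{Holder}.

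The main obstacle is rigour in the moment step: $f_p\notin\D$ (it is unbounded and does not vanish at infinity), so \eqref{eqdefNMP} does not apply to it directly. I would localise by the exit times $\tau_R=\inf\{t:|X_{s,t}^{\mu}|\ge R\}$, apply the martingale identity to a cut-off $f_p\chi_R\in\D$ on the stopped interval $[s,t\wedge\tau_R]$, verify that the generator of $f_p\chi_R$ still satisfies the one-sided bound on $\{|x|\le R\}$ up to error terms that vanish as $R\to\infty$, and then pass to the limit via Fatou's lemma, the preliminary bound guaranteeing $\tau_R\to\infty$. The delicate part is this truncation bookkeeping rather than the analytic estimates themselves; once it is in place, the H\"older assertion follows routinely from the mollification/cut-off argument above.
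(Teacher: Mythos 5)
Your proposal is correct in outline, but it is organised rather differently from the paper's proof, so a comparison is in order. For the moment bound the paper does not run the Lyapunov-function argument at all: it freezes the curve $\{\mu_t\}$, views $X_{s,t}^{\mu}$ as an ordinary Markov process, and simply invokes the a priori estimate (5.61) of Section 5.5 of \cite{VK2},
\[
\E \left[\min \big(|X_{s,t}^{\mu}-\hat x|^2, |X_{s,t}^{\mu}-\hat x|^p\big)\,\big|\,X_{s,s}^{\mu}=\hat x\right]\le e^{C(T,P)(t-s)}-1,
\]
from which \eqref{eq1proppmomentsfornonlin} and the increment bound $\E|X_{s,t_1}^{\mu}-X_{s,t_2}^{\mu}|\le C(T,P)\sqrt{|t_1-t_2|}$ both follow; the H\"older estimate \eqref{Holder} is then read off pathwise from $|(\psi,\mu_{t_1}-\mu_{t_2})|\le \E|\psi(X_{s,t_1}^{\mu})-\psi(X_{s,t_2}^{\mu})|\le \E|X_{s,t_1}^{\mu}-X_{s,t_2}^{\mu}|$ together with the Markov property. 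Your Dynkin--Gronwall argument with $f_p(x)=(1+|x|^2)^{p/2}$ is essentially a self-contained proof of the black box the paper cites, and your generator estimates are the right ones for $p\le 2$; the localisation you flag is standard and not circular, since the uniform-in-$R$ bound on $\E f_p(X_{s,t\wedge\tau_R}^{\mu})$ yields $\mathbf{P}(\tau_R\le t)\le c(1+p_\mu)(1+R^2)^{-p/2}$ by Chebyshev before Fatou is applied. Where you genuinely diverge is the H\"older step: you argue purely analytically on the kinetic equation, trading the bound $\|A[t,\mu]f_{\epsilon,R}\|_{C_\infty}\le c(P)/\epsilon$ against the mollification error $c\epsilon$ and optimising $\epsilon=\sqrt{|t_2-t_1|}$, instead of using the pathwise increment estimate. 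This is a legitimate alternative: it needs no probabilistic construction at all, and it quietly avoids the paper's slightly delicate passage \eqref{eq4proppmomentsfornonlin} from the $p$th moment of large increments to their first moment (which for $p<1$ is best handled by using $|\psi(x)-\psi(y)|\le\min(2,|x-y|)$ and $\nu(\{|y|\ge 1\})\le P$ anyway). The cost is the extra cut-off bookkeeping, which does go through because the norm $\|\cdot\|_{Lip}$ used in the statement controls the sup-norm, so the large-jump part of the generator is bounded by $c\,\nu(\{|y|\ge1\})\le cP$ rather than requiring a first moment of $\nu$ on $\{|y|\ge1\}$.
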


\proof
For a fixed trajectory $\{\mu_t\}_{t\geq 0}$ with initial value $\mu$, one can
consider $\{X_{s,t}^\mu\}$ as a usual Markov process.
Using the estimates for the moments of such processes from Section 5.5 (formula (5.61) of
\cite{VK2} (more precisely, its straightforward extension to time non-homogeneous case),
one obtains from \eqref{EBDD} that
\begin{equation}
\label{eq2proppmomentsfornonlin}
\E \left[\min \big(|X_{s,t}^{\mu}-\hat x|^2, |X_{s,t}^{\mu}-\hat x|^p\big)|X_{s,s}^{\mu}=\hat x)\right]
\le e^{C(T,P)(t-s)}-1.
\end{equation}
This implies \eqref{eq1proppmomentsfornonlin}.
Moreover, \eqref{eq2proppmomentsfornonlin} implies that
 \begin{equation}
\label{eq3proppmomentsfornonlin}
\E \left[|X_{s,t}^{\mu}-\hat x| {\bf 1}_{|X_{s,t}^{\mu}-\hat x| \le 1}|X_{s,s}^{\mu}=\hat x\right[
\le (e^{C(T,P)(t-s)}-1)^{1/2}\le C(T,P) \sqrt {t-s}
\end{equation}
 \begin{equation}
\label{eq4proppmomentsfornonlin}
\E \left[|X_{s,t}^{\mu}-\hat x| {\bf 1}_{|X_{s,t}^{\mu}-\hat x| \ge 1}|X_{s,s}^{\mu}=\hat x\right]
\le (e^{C(T,P)(t-s)}-1)^{1/p} \le C(T,P) (t-s)^{1/p},
\end{equation}
and consequently
 \begin{equation}
\label{eq5proppmomentsfornonlin}
\E \left(|X_{s,t}^{\mu}-\hat x| \, |X_{s,s}^{\mu}=\hat x\right)
 \le C(T,P) \sqrt {t-s},
\end{equation}
where constants $C(T,P)$ can have different values in various formulas above.

Since $\Phi^{t,s}(\mu)$ is the distribution law of the process $\{X_{s,t}^\mu\}$,
\begin{equation}
\label{eq6proppmomentsfornonlin}
\begin{split}
||\Phi^{s,t_1}(\mu)-\Phi^{s,t_2}(\mu)||_{(C_{Lip}(\R^d))^*}
= & \sup_{||\psi||_{C_{Lip}(\R^d)}\leq 1}   \big| \E \psi(X_{s,t_1}^\mu) - \E\psi(X_{s,t_2}^\mu) \big|  \\
\leq & \sup_{||\psi||_{C_{Lip}(\R^d)}\leq 1} \E \big|\psi(X_{s,t_1}^\mu)-\psi(X_{s,t_2}^\mu)\big| \\
\leq & \E \big|X_{s,t_1}^\mu-X_{s,t_2}^\mu\big|.
\end{split}
\end{equation}
From \eqref{eq5proppmomentsfornonlin}, \eqref{eq6proppmomentsfornonlin} and Markov property, we get
\eqref{Holder} as required.
\qed

\begin{remark} For the case of diffusions, H\"older continuity \eqref{Holder} was proved in \cite{HCM3}.
\end{remark}

Our main purpose for presenting Proposition \ref{proppmomentsfornonlin} lies in the following corollary.

\begin{prop}
 \label{propmomentcomp}
 Under the assumptions of Theorem \ref{thkineticeq} for generators
$A[t,\{\mu_.\}]$ of L\'evy-Khintchin type, but without locality condition
\eqref{eq0thkineticeq}, suppose the boundedness condition \eqref{EBDD} holds for some $p \in (0,2]$ and $P>0$.
Then the compactness condition from Theorem \ref{thglobalexistgenkin} (stating that set
\eqref{eq1thglobalexistgenkin} is compact in $\PC(\R^d)$) holds for any initial measure $\mu$
with a finite moment of $p$th order.
 \end{prop}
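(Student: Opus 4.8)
The plan is to deduce the required relative compactness from tightness via Prokhorov's theorem, the needed uniform moment bound being essentially the one already established in Proposition \ref{proppmomentsfornonlin}. First I would note that for any fixed curve $\{\xi.\}\in C_\mu([0,T],\MC(\D^*))$ the measure $\tilde{U}^{t,0}[\{\xi.\}]\mu$ is exactly the time-$t$ law of the ordinary time-inhomogeneous Markov process generated by $A[s,\xi_s]$, $0\le s\le t$, started from $\mu$ at time $0$; by assumption (ii) of Theorem \ref{thkineticeq} this process is well defined and its law stays in $\MC=\PC(\R^d)$.

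The crucial observation is that the moment estimate leading to \eqref{eq1proppmomentsfornonlin} was obtained inside the proof of Proposition \ref{proppmomentsfornonlin} by \emph{freezing} the measure argument along a fixed trajectory and treating the object as a genuine (linear) Markov process, so that the resulting bound depends only on the uniform coefficient bound $P$ of \eqref{EBDD} and on the initial moment $p_{\mu}$, and not at all on the trajectory used. Because \eqref{EBDD} is assumed uniformly over all $\mu\in\MC$, hence over every frozen value $\xi_s$, the very same computation applies simultaneously to all curves, yielding
\begin{equation*}
\int_{\R^d} |x|^p\, \big(\tilde{U}^{t,0}[\{\xi.\}]\mu\big)(dx) \le c(T,P)\,[1+p_{\mu}]
\end{equation*}
with a constant independent of $\{\xi.\}$ and of $t\in[0,T]$.

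Given this uniform bound on a positive moment, Chebyshev's inequality yields tightness of the set \eqref{eq1thglobalexistgenkin} directly: for every member $\nu$ of that set and every $R>0$,
\begin{equation*}
\nu(\{|x|>R\}) \le R^{-p}\int_{\R^d}|x|^p\,\nu(dx) \le R^{-p}\,c(T,P)\,[1+p_{\mu}],
\end{equation*}
which tends to $0$ as $R\to\infty$ uniformly over the whole family. By Prokhorov's theorem the family is then relatively compact in $\PC(\R^d)$ for the weak topology, and this is precisely the topology in which the compactness hypothesis of Theorem \ref{thglobalexistgenkin} is meant (on families of probability laws sharing a common moment bound the weak topology is metrizable and agrees with the $\D^*$-type convergence used there, since the unit ball of $\D\subseteq C^1_\infty$ is equicontinuous). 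As the bound is uniform in $t$, the conclusion in fact holds for every $t\in[0,T]$, and a fortiori on the dense subset required by Theorem \ref{thglobalexistgenkin}.

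The only real point needing care is the conceptual first step: recognizing that the estimate of Proposition \ref{proppmomentsfornonlin}, although phrased there for the self-consistent nonlinear law, transfers verbatim to the linear propagators $\tilde{U}^{t,0}[\{\xi.\}]$ attached to arbitrary frozen curves. This transfer rests entirely on the uniformity of \eqref{EBDD} over $\MC$; once it is granted, the tightness-plus-Prokhorov argument is routine.
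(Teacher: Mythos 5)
Your proposal is correct and follows essentially the same route as the paper: the paper's own proof is a one-line appeal to the uniform $p$th moment bound \eqref{eq1proppmomentsfornonlin} together with the observation that a family of probability laws with a uniformly bounded $p$th moment is tight and hence relatively compact. Your additional care in noting that the moment estimate, proved in Proposition \ref{proppmomentsfornonlin} by freezing the measure trajectory, transfers uniformly to the linear propagators $\tilde{U}^{t,0}[\{\xi.\}]\mu$ for \emph{arbitrary} curves $\{\xi.\}$ (precisely because \eqref{EBDD} is uniform over $\MC$) is a useful explicit justification of a step the paper leaves implicit.
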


 \begin{proof}
 It follows from  \eqref{eq1proppmomentsfornonlin} and an observation that a set of probability laws on $\R^d$ with a bounded $p$th moment, $p>0$, is tight and hence relatively compact.
 \end{proof}

\section{Basic examples in $\R^d$ and $\R^d\times\{1,\cdots,K\}$}
\label{examples}

In this section, we present some basic examples of generators that fit to assumptions of the abstract theorems in Section \ref{secabstractnonMark}.

Notice that for the study of Markovian kinetic equations (without a coupling with control) the most nontrivial condition of Theorem \ref{thkineticeq} is (ii), as it concerns the difficult question from the theory of usual Markov process, on when
a given pre-generator of L\'evy-Khintchine type does really generate a Markov process.
More difficult is the situation with time-dependent generators, as the standard semigroup methods
(resolvents and Hille-Phillips-Iosida theorem) are not applicable. Further discussion of this topic and other examples
can be found in literature suggested in Section \ref{secconclud}.

\begin{remark} For general anticipating equations coupled with control, assumption (i) of Theorem \ref{thkineticeq}
(yielding feedback regularity) becomes nontrivial as well. Verifiable conditions for this assumption to hold are given at the end of Section \ref{Sensitivity} as the main consequence of the sensitivity theory for HJB equation developed there.
\end{remark}

\begin{example} Nonlinear L\'evy processes are specified by a families of generators of type (\ref{fellergenerator 3}) such that all coefficients do not depend on $z$, i.e.
\begin{equation}
\begin{split}
A[t,\mu]f(x)=&\frac{1}{2}(G(t,\mu) \nabla,\nabla)f(x)+(b(t,\mu), \nabla f)(x)\\
&+\int [f(x+y)-f(x)-(y, \nabla f(x))\mathbf{1}_{B_1}(y)]\nu(t,\mu,dy). \nonumber
\end{split}
\end{equation}

The following statement is a consequence of Proposition 7.1 from \cite{Ko10} (and can be easily obtained from the general theory of processes with independent increments as developed e.g. in \cite{GiSk75}).

\begin{prop}
\label{NLP}
Supposed that the coefficients $G,b,\nu$ are continuous in $t$ and Lipschitz continuous in $\mu$ in the norm of Banach space $(C^2_{\infty}(\R^d))^*$, i.e.
\begin{equation*}
\begin{split}
\|G(t,\mu)-G(t,\eta)\|+&\|b(t,\mu)-b(t,\eta)\|+\int\min(1,|y|^2)|\nu(t,\mu,dy)-\nu(t,\eta, dy)|\\
\leq &c||\mu-\eta||_{(C^2_{\infty}(\R^d))^*},\quad \forall t\geq 0,
\end{split}
\end{equation*}
with a positive constant $c$, then condition (ii) of Theorem \ref{thkineticeq} holds with $\D =C^2_{\infty} (\R^d)$.
\end{prop}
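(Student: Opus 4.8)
The plan is to exploit the spatial homogeneity of the generators by passing to the Fourier transform, which diagonalizes the whole family $\{A[\tau,\xi_\tau]\}$ simultaneously and reduces the construction of the propagator to a scalar computation. Fix a curve $\{\xi_.\}\in C_\mu([0,T],\M(\D^*))$. Substituting $\mu=\xi_\tau$ turns $A[\tau,\xi_\tau]$ into a time-dependent, spatially homogeneous L\'evy--Khintchine operator whose Fourier multiplier (symbol) is
\[
\psi_\tau(p)=-\tfrac12(G(\tau,\xi_\tau)p,p)+i(b(\tau,\xi_\tau),p)+\int\big(e^{i(p,y)}-1-i(p,y)\1_{B_1}(y)\big)\nu(\tau,\xi_\tau,dy),
\]
so that $\widehat{A[\tau,\xi_\tau]f}(p)=\psi_\tau(p)\hat f(p)$ for $f\in\D=C^2_\infty(\R^d)$. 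First I would check that $\tau\mapsto\psi_\tau(p)$ is continuous: since $\xi_\tau$ is continuous in $\tau$ in the $\D^*$-norm and $G,b,\nu$ are continuous in $t$ and Lipschitz in $\mu$ in the $(C^2_\infty)^*$-norm, the L\'evy triplet $(G,b,\nu)(\tau,\xi_\tau,\cdot)$ depends continuously on $\tau$, while the uniform operator bound \eqref{Lip_Aa} yields the growth estimate $|\psi_\tau(p)|\le c_1(1+|p|^2)$ uniformly in $\tau$.

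Next I would define the candidate propagator on the Fourier side by
\[
\widehat{U^{t,r}[\{\xi.\}]f}(p)=\exp\Big(\int_t^r\psi_\tau(p)\,d\tau\Big)\hat f(p),\qquad 0\le t\le r\le T.
\]
The crucial point, and the main obstacle, is to show that $\Psi_{t,r}(p):=\int_t^r\psi_\tau(p)\,d\tau$ is again a L\'evy--Khintchine exponent, so that $\exp(\Psi_{t,r})$ is the characteristic function of a genuine probability measure $\rho_{t,r}$ on $\R^d$ and $U^{t,r}f=f*\rho_{t,r}$. This holds because the class of continuous negative-definite functions vanishing at the origin is a convex cone closed under integration in $\tau$ (equivalently, $\Psi_{t,r}$ is the exponent of the time-inhomogeneous process with independent increments, an \emph{additive process}, built from the triplets $(G,b,\nu)(\tau,\xi_\tau,\cdot)$; this is exactly the content of Proposition~7.1 in \cite{Ko10} and of the classical theory in \cite{GiSk75}). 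The additivity $\int_t^s+\int_s^r=\int_t^r$ translates into $\rho_{t,s}*\rho_{s,r}=\rho_{t,r}$, i.e. the backward propagator equation, while $\rho_{t,t}=\delta_0$ gives $U^{t,t}=\mathrm{id}$.

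Finally I would read off all the requirements of condition (ii) from the convolution structure. Since each $\rho_{t,r}$ is a probability measure, $U^{t,r}$ is a contraction on $\B=C_\infty(\R^d)$, giving $\|U^{t,r}\|_{\B\to\B}\le c_3=1$; because convolution commutes with differentiation, $\partial^\alpha(f*\rho_{t,r})=(\partial^\alpha f)*\rho_{t,r}$, so $\D=C^2_\infty$ is invariant and $\|U^{t,r}\|_{\D\to\D}\le c_2=1$. Strong continuity follows from $\mathrm{Re}\,\psi_\tau\le0$ together with $\hat\rho_{t,r}(p)=\exp(\Psi_{t,r}(p))\to1$ as $r\to t$ (hence $\rho_{t,r}\to\delta_0$ weakly); the generator equations \eqref{Generates} hold on $\D$ by differentiating $\exp(\Psi_{t,r})$ under the Fourier transform; and the dual propagators $\tilde U^{r,t}$, acting on measures again by convolution with $\rho_{t,r}$, map $\P(\R^d)$ into itself, so $\M=\P(\R^d)$ is preserved. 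This verifies condition (ii) with $\D=C^2_\infty(\R^d)$ and constants $c_2=c_3=1$.
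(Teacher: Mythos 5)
Your argument is correct and coincides with the paper's own (uncited) route: the paper gives no written proof of Proposition \ref{NLP}, merely stating that it follows from Proposition 7.1 of \cite{Ko10} and the classical theory of processes with independent increments in \cite{GiSk75}, and your Fourier-multiplier construction of the propagator as convolution with the transition laws of the associated additive process is precisely that argument written out. The only point worth polishing is the justification of the generator equations \eqref{Generates} in the Banach topology of $C_\infty(\R^d)$ for $f\in C^2_\infty(\R^d)$ (where $\hat f$ need not be integrable), which requires a standard approximation or a direct estimate rather than literal differentiation under the Fourier transform.
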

\end{example}

\begin{example}
\label{MV Diffusion}
McKean-Vlasov diffusion are specified by the following stochastic differential equation
\[
dX_t = b(t, X_t, \mu_t)dt + \sigma(t, X_t, \mu_t)dW_t,
\]
where drift coefficient $b:\R^+\times \mathbf{R}^d \times \P(\mathbf{R}^d)\rightarrow \mathbf{R}^d$, diffusion coefficient $\sigma: \R^+\times\mathbf{R}^d \times \P(\mathbf{R}^d)\rightarrow \mathbf{R}^d$ and $W_t$ is a standard Brownian motion. The corresponding generator is given by
\[
A[t,\mu]f(x) = (b(t, x,\mu), \nabla f(x))+ \frac{1}{2}(G(t, x,\mu), \nabla^2f(x)),\quad f\in C^2_{\infty}(\R^d),
\]
where $G(t, x,\mu)=tr\{\sigma (t, x,\mu)\sigma^T (t, x,\mu)\}$.
It is well known (and follows from Ito's calculus) that if the coefficients of a diffusion are
Lipshitz continuous, the corresponding SDE is well posed, implying the following.

\begin{prop}
\label{propnonlindif}
If $G, b$ are continuous in $t$, Lipshitz continuous in $x$ and Lipschitz continuous in $\mu$ in the topology of $(C^2_{\infty}(\R^d))^*$,
then the condition (ii) of Theorem \ref{thkineticeq} is satisfied.
\end{prop}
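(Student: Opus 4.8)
The plan is to verify condition (ii) of Theorem \ref{thkineticeq} by showing that, for a fixed Lipschitz continuous measure flow $\{\mu_.\}$, the McKean-Vlasov generator $A[t,\mu_t]$ generates a strongly continuous backward propagator $U^{t,s}[\{\mu_.\}]$ on $\B=C_\infty(\R^d)$ with common invariant domain $\D=C^2_\infty(\R^d)$, satisfying the uniform bounds \eqref{BDD} and with duals preserving $\M=\P(\R^d)$. First I would freeze the measure argument: for a given curve $\{\mu_.\}$, the coefficients $b(t,x,\mu_t)$ and $G(t,x,\mu_t)=\tr\{\sigma\sigma^T\}(t,x,\mu_t)$ become (time-dependent) functions of $(t,x)$ only, which are continuous in $t$ and Lipschitz in $x$ uniformly in $t$ by hypothesis. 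This reduces the problem to the classical theory of a linear, time-inhomogeneous diffusion generator, for which Ito's calculus gives a well-posed SDE
\begin{equation*}
dX_\tau = b(\tau,X_\tau,\mu_\tau)\,d\tau + \sigma(\tau,X_\tau,\mu_\tau)\,dW_\tau,
\end{equation*}
and the associated two-parameter family of transition operators $U^{t,s}f(x)=\E[f(X^{s,x}_t)]$ is the required backward propagator, with duals $\tilde U^{s,t}$ acting on measures by push-forward and hence automatically preserving $\P(\R^d)$.

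The key technical steps I would carry out are the following. First, the $\B\mapsto\B$ bound $\|U^{t,s}[\{\mu_.\}]\|_{\B\mapsto\B}\le c_3$ is immediate with $c_3=1$, since $U^{t,s}$ is a Feller (sub-Markovian) operator: $\E[f(X_t)]$ is dominated in sup-norm by $\|f\|$. Second, and more substantively, I would establish the $\D\mapsto\D$ bound $\|U^{t,s}[\{\mu_.\}]\|_{\D\mapsto\D}\le c_2$, i.e. that the propagator preserves $C^2_\infty(\R^d)$ with a uniform norm bound. This is exactly where the Lipschitz (indeed, one would want a little more, such as bounded derivatives of the coefficients) hypothesis in $x$ does the real work: by differentiating the flow $x\mapsto X^{s,x}_t$ of the SDE in the initial condition and using standard Gronwall estimates on the first and second variation processes, one controls $\nabla U^{t,s}f$ and $\nabla^2 U^{t,s}f$ in terms of $\|f\|_{C^2}$, the bound being uniform over $t,s\in[0,T]$ and over all admissible flows $\{\mu_.\}$ because the Lipschitz constants of $b,G$ in $x$ are uniform. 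Third, I would confirm that $\D$ is a common invariant domain and that $A[t,\mu_t]$ generates $U^{t,s}[\{\mu_.\}]$ in the sense of \eqref{Generates}, which follows from Ito's formula applied to $f\in C^2_\infty$ together with the backward Kolmogorov equation for the frozen-flow diffusion.

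The main obstacle, and the step deserving the most care, is the uniform two-sided $C^2$-estimate \eqref{BDD} on $\|U^{t,s}\|_{\D\mapsto\D}$: the plain statement ``$G,b$ Lipschitz in $x$'' guarantees well-posedness and the $C^1$ a priori bound on the stochastic flow, but propagating two derivatives through the diffusion and obtaining bounds independent of the measure flow requires differentiating the coefficients twice (or invoking the smoothing of the diffusion together with Schauder-type parabolic estimates). One must also verify that $U^{t,s}$ maps $C_\infty$ into $C_\infty$, i.e. that the decay at infinity is preserved, which follows from the moment estimates of Proposition \ref{proppmomentsfornonlin} guaranteeing $X^{s,x}_t\to\infty$ in law as $x\to\infty$. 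Once these uniform bounds are in hand, strong continuity in $(t,s)$ follows from the $\tfrac12$-Hölder continuity in $t$ of the flow (again Proposition \ref{proppmomentsfornonlin}) and the density of $\D$ in $\B$ via Proposition \ref{prop-propergatorscontdep}, completing the verification of condition (ii).
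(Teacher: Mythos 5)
Your proposal takes essentially the same route as the paper, whose entire proof is the one-line remark that Lipschitz coefficients make the SDE well posed by It\^o's calculus; you have simply filled in the construction of the propagator from the stochastic flow and the verification of the bounds \eqref{BDD}. Your observation that plain Lipschitz continuity in $x$ controls only the first variation of the flow, so that the $\D\mapsto\D$ bound on $C^2_\infty$ really needs twice-differentiable coefficients (or a smoothing argument), is a fair point about a detail the paper's terse proof leaves implicit, but it does not change the method.
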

\end{example}

\begin{example} Nonlinear stable-like processes (including tempered ones) are specified by the families
\begin{equation}
\begin{split}
A[t,\mu]f(x) = &(b(t,x,\mu), \nabla f(x))+\int (f(x+y)-f(x)) \nu(t,x,\mu,dy) \\
&+ \int_0^K d|y| \int_{S^{d-1}}
 a(t,x,s)\frac {f(x+y)-f(x)-(y,\nabla f(x))}{|y|^{\al_t(x,s)+1}}
 \om_t(ds).
\end{split}
\end{equation}
Here $s=y/|y|$, $K>0$, $\om_{t}$ are certain finite Borel measures on
$S^{d-1}$ and $\nu(t,x,\mu,dy)$ are finite measures, $a$, $\al$ are positive bounded functions with
$\al \in (0,2)$.

The following result is a corollary of (a straightforward time-nonhomogeneous extension of) Proposition
4.6.2 of \cite{VK2}.
\begin{prop}
\label{propstablelikesmooth}
If all coefficients are continuous in $t$, $a,\al$ are $C^1$-functions in $x,s$, $b$ and $\nu$ are Lipshitz continuous in $x$ and $\mu$ (with $\mu$ taken in the topology $(C^2_{\infty}(\R^d))^*$,
then all conditions of Theorem \ref{thkineticeq} are satisfied with $\D =C^2_{\infty} (\R^d)$.
\end{prop}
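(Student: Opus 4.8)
The plan is to verify that the two hypotheses (i) and (ii) of Theorem \ref{thkineticeq} hold for this family with $\D=C^2_{\infty}(\R^d)$, $\B=C_{\infty}(\R^d)$ and $\M=\P(\R^d)$, exploiting the natural splitting of the generator into a first-order drift term, a bounded (finite-measure) jump term, and the genuinely stable-like term. First I would treat condition (i). Since the measure dependence enters only through $b$ and $\nu$, the uniform bound \eqref{Lip_Aa} reduces to bounding each piece on $C^2_\infty$: the drift contributes $\le \|b\|_\infty\,\|f\|_{C^2}$; the finite-measure jump part contributes $\le 2\,\nu(\R^d)\,\|f\|_{C^2}$; and for the stable-like integral a second-order Taylor expansion of $f$ gives $|f(x+y)-f(x)-(y,\nabla f(x))|\le \tfrac12\|f\|_{C^2}|y|^2$, so the integrand is $O(|y|^{1-\al_t(x,s)})$, which is integrable on $(0,K)$ precisely because $\al\in(0,2)$; the resulting bound is $c\,\|f\|_{C^2}$ with $c$ depending only on $\sup a$, $\sup\al$, $\om_t(S^{d-1})$ and $K$. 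That each term maps into $C_\infty$ (continuity and decay at infinity) follows from the assumed continuity of the coefficients.

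For the Lipschitz estimate \eqref{Lip_A}, I would note that only the drift and finite-measure parts carry the $\mu$-dependence. The difference $A[t,\mu]-A[t,\eta]$ applied to $f$ is therefore controlled by $\|b(t,\cdot,\mu)-b(t,\cdot,\eta)\|\,\|\nabla f\|_\infty$ together with $\int\min(1,|y|^2)\,|\nu(t,\cdot,\mu,dy)-\nu(t,\cdot,\eta,dy)|$ paired against $f$ and $\nabla^2 f$. By the assumed Lipschitz continuity of $b$ and $\nu$ in $\mu$ in the norm of $(C^2_\infty(\R^d))^*$, both are bounded by $c\,\|\mu-\eta\|_{(C^2_\infty)^*}\,\|f\|_{C^2}$, which yields \eqref{Lip_A} (the dependence here being Markovian, through the current measure, so the supremum-in-curve form is immediate). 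Next I would verify condition (ii): fixing a curve $\{\mu_.\}$ and substituting it converts $A[t,\{\mu_.\}]$ into a time-dependent stable-like operator whose principal coefficients $a,\al$ are $C^1$ in $(x,s)$ and whose lower-order coefficients $b,\nu$ are Lipschitz in $x$. For exactly this class, Proposition~4.6.2 of \cite{VK2} establishes in the time-homogeneous case that such an operator generates a Feller semigroup on $C_\infty$ with $C^2_\infty$ as an invariant core, together with the operator-norm bounds on $C_\infty$ and $C^2_\infty$. Its straightforward time-nonhomogeneous extension then produces a strongly continuous backward propagator $U^{t,s}[\{\mu_.\}]$ preserving $C^2_\infty$ and satisfying \eqref{BDD}. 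Because the generator is of conservative L\'evy--Khintchin type (no killing), the propagator is Markov, so its dual $\tilde U^{s,t}$ maps $\P(\R^d)$ into itself.

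The main obstacle is not the generation statement itself, which I would simply cite: its proof in \cite{VK2} proceeds by parametrix/perturbation methods for pseudo-differential operators of variable order $\al_t(x,s)$, and the hypotheses $\al\in(0,2)$ and $a,\al\in C^1$ are precisely what make that construction converge. The real work is in making this input applicable \emph{uniformly}: I must check that as $\{\mu_.\}$ ranges over the admissible set $C_\mu([0,T],\M(\D^*))$ the substituted coefficients remain within a single class to which the cited theorem applies with $t$- and curve-independent constants, so that the constants $c_2,c_3$ in \eqref{BDD} are genuinely uniform; and that the continuity in $t$ of the coefficients upgrades to strong continuity of the propagator in both arguments. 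Granting these, conditions (i) and (ii) hold with $\D=C^2_\infty(\R^d)$, completing the verification.
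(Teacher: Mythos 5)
Your proposal is correct and follows essentially the same route as the paper: the paper's entire ``proof'' is the one-line remark that the result is a corollary of (a time-nonhomogeneous extension of) Proposition 4.6.2 of \cite{VK2}, which is exactly the citation you use for condition (ii). Your explicit verification of condition (i) --- splitting the generator into drift, bounded-jump and stable-like parts, using the second-order Taylor bound $|y|^{2}/|y|^{\al+1}=|y|^{1-\al}$ integrable on $(0,K)$ for $\al<2$, and noting that only $b$ and $\nu$ carry the $\mu$-dependence --- is a correct filling-in of details the paper leaves implicit.
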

\end{example}

\begin{example}
 \label{OAMO}
Processes of order at most one are specified by the families
\begin{equation}\label{eqshortgeneratornonlin}
 A[t,\mu]f(x)=(b(t,x,\mu),\nabla f(x))+\int_{\mathbf{R}^d}(f(x+y)-f(x))\nu (t,x,\mu,dy), \nonumber
\end{equation}
with the L\'evy measures $\nu$ having finite first moment $\int |y|\nu(t,x,\mu,dy)$.
The next result is established in Theorem 4.17 of \cite{Ko10}.

\begin{prop}
If $b,\nu$ are continuous in $t$ and Lipschitz continuous in $\mu$, i.e.
\begin{equation*}
\begin{split}
||b(t,x,\mu)-b(t,x,\eta)||+&\int |y|\nu(t,x,\mu,dy)-\nu(t,x,\eta, dy)|\\
\leq &c||\mu-\eta||_{(C^1_{\infty}(\R^d))^*},\quad \quad \forall t\geq 0, x\in \R^d
\end{split}
\end{equation*}
and Lipshitz continuous in $x$,
then condition (ii) of Theorem \ref{thkineticeq} is satisfied with $\D =C^1_{\infty} (\R^d)$.
\end{prop}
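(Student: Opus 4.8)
The plan is to fix the measure curve $\{\mu_t\}$ at the outset, so that the whole task reduces to showing that the time-dependent family $A[t,\mu_t]$ — now a genuine family of L\'evy--Khintchine generators with coefficients $b(t,x):=b(t,x,\mu_t)$ and $\nu(t,x,dy):=\nu(t,x,\mu_t,dy)$ that are continuous in $t$ and Lipschitz in $x$ — generates the backward propagator required by condition (ii) of Theorem \ref{thkineticeq}, here with $\D=C^1_\infty(\R^d)$, $\B=C_\infty(\R^d)$ and $\M=\P(\R^d)$. The structural feature that makes the order-at-most-one case tractable is that the finite first moment renders the nonlocal part a bounded operator $C^1_\infty\mapsto C_\infty$: from $|f(x+y)-f(x)|\le\|\nabla f\|_\infty|y|$ one obtains $\|\int(f(\cdot+y)-f)\nu(\cdot,dy)\|_\infty\le\|\nabla f\|_\infty\sup_x\int|y|\nu(x,dy)$, and the drift $(b,\nabla f)$ is bounded $C^1_\infty\mapsto C_\infty$ as well since $b$ is bounded. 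Thus $A[t,\mu_t]\colon\D\mapsto\B$ is bounded uniformly in $t$, and the propagator, once constructed, will act on the pair $(C^1_\infty,C_\infty)$.

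Next I would construct the propagator itself, for which the probabilistic route is cleanest. Fixing $\{\mu_t\}$, I would solve the jump SDE $dX_t=b(t,X_{t-})\,dt+\int y\,N(dt,dy)$, where $N$ is a Poisson random measure with intensity $\nu(t,X_{t-},dy)\,dt$; the finite first moment makes the jump integral absolutely convergent, so no compensator is needed, and the Lipschitz dependence of $b$ and $\nu$ on $x$ yields existence and uniqueness of a strong solution by the standard contraction argument for SDEs with jumps. Setting $U^{t,s}f(x)=\E[f(X_t)\mid X_s=x]$ produces a strongly continuous backward propagator on $C_\infty$ generated by $A[s,\mu_s]$ on the core $C^1_\infty$ in the sense of the Kolmogorov equations. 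Since $A[s,\mu_s]\mathbf{1}=0$ the process is conservative, so $U^{t,s}\mathbf{1}=\mathbf{1}$; being also positivity preserving, $U^{t,s}$ is a contraction on $C_\infty$, giving $\|U^{t,s}\|_{\B\mapsto\B}\le 1$, and by duality the invariance of $\M=\P(\R^d)$ under the adjoint propagator $\tilde U^{s,t}$.

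The genuinely delicate point, and the one I expect to be the main obstacle, is the uniform bound $\|U^{t,s}\|_{\D\mapsto\D}\le c_2$ together with the invariance of $\D=C^1_\infty$. Formally, differentiating the backward equation $\partial_s u+A[s,\mu_s]u=0$ in $x$ shows that $v=\nabla u$ solves an equation of the same order-one type plus zeroth-order terms whose coefficients are $\nabla b$ and the $x$-derivatives of $\nu$; equivalently, the Jacobian $\nabla_x X_t$ of the stochastic flow satisfies a linear equation with those same coefficients. Either viewpoint yields, via Gronwall, the estimate $\|\nabla U^{t,s}f\|_\infty\le e^{C(t-s)}\|\nabla f\|_\infty$ with $C$ depending only on the Lipschitz constants of $b$ and $\nu$ (the jump contribution being controlled by $\int|y|\,|\nu(x,dy)-\nu(x',dy)|\le L|x-x'|$). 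The obstruction is that under merely Lipschitz, not $C^1$, coefficients the derivative terms exist only almost everywhere, so the bound cannot be read off directly.

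To overcome this I would mollify $b$ and $\nu$ to smooth coefficients $b^\ep,\nu^\ep$, for which the flow is genuinely $C^1$ and $C^1_\infty$ is invariant, and derive the gradient bound with a constant $C$ that depends only on the (approximation-independent) Lipschitz constants. Passing to the limit $\ep\to0$, the propagators $U^{t,s}_\ep$ converge strongly on $C_\infty$ to $U^{t,s}$, while for $f\in C^1_\infty$ the family $\{U^{t,s}_\ep f\}$ is equibounded with equi-Lipschitz gradients; Arzel\`a--Ascoli then identifies the uniform limit of the gradients with $\nabla U^{t,s}f$, so that $U^{t,s}f\in C^1_\infty$ and the uniform $C^1\mapsto C^1$ bound survives. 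This establishes the invariance and boundedness on $\D$ required by (ii), and packages precisely the content of Theorem 4.17 of \cite{Ko10}, to which the remaining routine verifications can be deferred.
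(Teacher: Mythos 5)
The paper offers no argument of its own here: the proposition is dispatched by the single sentence ``established in Theorem 4.17 of \cite{Ko10}'', so your proposal has to be measured against the construction in that reference rather than against anything in the text. Your overall architecture is the right one and matches the spirit of the analytic construction there: the finite first moment makes $A[t,\mu_t]$ a uniformly bounded operator from $C^1_\infty(\R^d)$ to $C_\infty(\R^d)$; conservativity plus positivity preservation give the contraction bound on $C_\infty(\R^d)$ and the invariance of $\P(\R^d)$ under the dual propagator; and the entire weight of condition (ii) of Theorem \ref{thkineticeq} falls on the uniform bound for $U^{t,s}$ on the smooth/Lipschitz space, obtained by a Gronwall estimate whose constants are exactly the Lipschitz constants of $b$ and of $x\mapsto \nu(t,x,\mu,\cdot)$ in the $\int|y|\,|\cdot|$ metric. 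That identification of the crux is correct.

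Two steps, however, are glossed over in a way that would not survive being written out. First, the existence step: a ``Poisson random measure with intensity $\nu(t,X_{t-},dy)\,dt$'' is not a standard driving object, and the standard contraction argument for jump SDEs requires a \emph{fixed} Poisson random measure together with an integrand $g(t,x,u)$, Lipschitz in $x$, whose pushforward reproduces $\nu(t,x,\cdot)$. Lipschitz continuity of $x\mapsto\nu(t,x,\cdot)$ in the weighted total-variation metric of the hypothesis does not by itself produce such a Lipschitz coupling, and $\nu$ may have infinite total mass, so naive thinning is also unavailable. This is precisely why the cited construction proceeds analytically (approximation by cut-off kernels and passage to the limit via the propagator-difference formula of Proposition \ref{prop-propergatorProperty}); the construction cannot simply be delegated to ``standard SDE theory''. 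Second, the limit passage for the gradients: for $f\in C^1_\infty(\R^d)$ you obtain a uniform sup-bound on $\nabla U^{t,s}_\ep f$ but no uniform modulus of continuity for these gradients (that would require second-derivative bounds), so there are no ``equi-Lipschitz gradients'' and Arzel\`a--Ascoli does not apply. Indeed uniform convergence of the gradients is in general false: already for the pure drift flow with $b(x)=|x|$ in $d=1$ the map $f\mapsto f\circ Z_t$ takes $C^1_\infty$ out of $C^1_\infty$, so the most one can extract from your uniform bound (by weak-$*$ compactness) is that $U^{t,s}f$ is Lipschitz with constant $e^{C(s-t)}\|f\|_{C^1_\infty}$. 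That Lipschitz bound is in fact what is used downstream in Theorem \ref{thkineticeq}, but it is not the literal $C^1_\infty$-invariance your argument claims to deliver, and closing this gap (or reformulating the invariant domain in terms of $C_{Lip}$) is exactly the content of the result being cited.
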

The generators of order at most one describe a variety of well known models including spatially homogeneous and
mollified Boltzmann equation and interacting $\al$-stable laws with $\al <1$.
\end{example}

\begin{example}
\label{integralform} Pure jump processes on a locally compact metric $\XC$ are specified by integral generators of the form
\begin{equation}
A[t,\mu]f(x) = \int_{\R^d}(f(y)-f(x))\nu(t, x,\mu,dy)
\end{equation}
with bounded measures $\nu$. Assuming, unlike our basic assumption \eqref{fellergeneratorwithdriftcont},
that we can control jumps of this process, i.e. measure $\nu$ depends on control $u$, basic kinetic equation
\eqref{eqkineqmeanfieldnonM} takes the form
\begin{equation}
\label{eqkineqmeanfieldnonMinteg}
 {d \over dt} (f, \mu_t)
 =\int \int (f(y)-f(x)) \nu (t, x,\mu_t, \Gamma (t,x,\{\mu_{\ge t}\}), dy) \mu_t (dx),
\end{equation}
or in the strong form
\begin{equation}
\label{eqkineqmeanfieldnonMinteg1}
\begin{split}
 {d \over dt}\mu_t (dx)
 =\int &\nu (t, z,\mu_t, \Gamma (t,z,\{\mu_{\ge t}\}), dx) \mu_t (dz)\\
& - \mu_t (dx) \int  \nu (t, x,\mu_t, \Gamma (t,x,\{\mu_{\ge t}\}), dz),
 \end{split}
\end{equation}
and the corresponding HJB equation becomes
 \begin{equation}
\label{eqkineqmeanfieldnonMintegHJB}
 {\pa V \over \pa t} (t,x)+\max_u \left[J(t,x,\mu_t,u)+ \int (V(y)-V(x)) \nu (t, x,\mu_t, u, dy)\right]=0.
\end{equation}
In particular, if $\XC$ is a finite set $\{1,\cdots, K\}$, measures become $K$-dimensional vectors and kinetic
equation \eqref{eqkineqmeanfieldnonMinteg1} rewrites as
\begin{equation}
\label{eqkineqmeanfieldnonMinteg2}
 \dot \mu_t^i
 =\sum_j \nu (t,j,\mu_t, \Gamma (t,j,\{\mu_{\ge t}\}),i) \mu_t^j
 - \mu_t^i \sum_j  \nu (t,i,\mu_t, \Gamma (t,i,\{\mu_{\ge t}\}), j).
\end{equation}
It is easy to see that, if measures $\nu (t, x,\mu_t, u, dy)$ are uniformly bounded,
the conditions of Theorem \ref{thkineticeq} are satisfied with $\D =C_{\infty} (\R^d)$.
For unbounded rates we refer to \cite{Ko10} (and references therein) for a detailed discussion.
\end{example}

Finally, we are interested in a multi agent setting of Subsection \ref{subescdiscrete}.

\begin{theorem}
\label{propmainmultiagentexamples}
Let $A=(A_1,\cdots, A_K)$ be a family of operators in $C_{\infty}(\R^d)$ of form
\eqref{fellergeneratorwithdriftcont}, with $u=\Gamma (t,z, \{\mu_{\ge t}\})$, where $L_i$ are given by one of the examples above, with all required continuity assumptions holding for each $A_i$. Moreover, let $h_i$ be a Lipshitz continuous function of $u$ and $\Gamma$ depends Lipshitz continuously on all its arguments with measures considered in the topology of $\D^*$.

(i) Then all conditions and hence the conclusions of Theorem \ref{thkineticeq} are satisfied.

(ii) If boundedness conditions of Proposition \ref{propmomentcomp} hold for each $A_i$, $i=1,\cdots, K$,
then the compactness requirement of Theorem \ref{thglobalexistgenkin} holds for any initial measure $\mu$
with a finite moment of $p$th order and hence the corresponding existence result holds.
\end{theorem}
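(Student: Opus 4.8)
The plan is to read the theorem as a verification statement: once the feedback $u=\Ga(t,z,\{\mu_{\ge t}\})$ is substituted into \eqref{fellergeneratorwithdriftcont}, the resulting anticipating operators fall into the abstract class of Section~\ref{secabstractnonMark}, so part~(i) amounts to checking hypotheses (i) and (ii) of Theorem~\ref{thkineticeq} and part~(ii) to invoking Proposition~\ref{propmomentcomp}. The structural simplification driving everything is the absence of mutations: with all off-diagonal blocks vanishing, $A=(A_1,\dots,A_K)$ is block-diagonal on $C_\infty(\R^d\times\{1,\dots,K\})=\bigoplus_{i=1}^K C_\infty(\R^d)$, so for a frozen measure flow each class evolves independently and the anticipating dependence on the future enters only through the scalar feedback $\Ga$. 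After substitution each block reads
\begin{equation*}
A_i[t,\{\mu.\}]f(z)=\big(h_i(t,z,\mu_t,\Ga(t,z,\{\mu_{\ge t}\})),\nabla f(z)\big)+L_i[t,\mu_t]f(z),
\end{equation*}
and I would take $\D$ to be the product of the domains ($C^2_\infty$ or $C^1_\infty$) supplied by the examples for the various $L_i$.

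First I would treat hypothesis (i). Uniform boundedness \eqref{Lip_Aa} is immediate: the first-order term is bounded in $\B$ by $\|h_i\|_\infty\|f\|_\D$ since $\D\subseteq C^1_\infty(\R^d)$, while $\|L_i[t,\mu_t]f\|_\B\le c\|f\|_\D$ is the boundedness furnished by the example from which $L_i$ is drawn. For the Lipschitz estimate \eqref{Lip_A} I would split $A_i[t,\{\mu.\}]-A_i[t,\{\eta.\}]$ into its $L_i$ part and its drift part. The $L_i$ part is controlled by the example's Lipschitz-in-$\mu$ hypothesis in the $\D^*$ topology. The drift part is the delicate one: I would bound
\begin{equation*}
\big|h_i(t,z,\mu_t,\Ga(t,z,\{\mu_{\ge t}\}))-h_i(t,z,\eta_t,\Ga(t,z,\{\eta_{\ge t}\}))\big|
\end{equation*}
by chaining the Lipschitz dependence of $h_i$ on $(\mu,u)$ with the hypothesis that $\Ga$ is Lipschitz in its measure argument in the $\D^*$ topology, so that both contributions are dominated by $\sup_{s}\|\mu_s-\eta_s\|_{\D^*}$, the right-hand side of \eqref{Lip_A}.

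Next I would verify hypothesis (ii) and then part~(ii). For (ii) one freezes the flow $\{\xi.\}$, turning $A_i[t,\{\xi.\}]$ into a purely time-dependent L\'evy--Khintchine operator whose diffusion and jump parts come from $L_i[t,\xi_t]$ and whose drift is the combined field $b_i(t,z,\xi_t)+h_i(t,z,\xi_t,\Ga(t,z,\{\xi_{\ge t}\}))$; since $h_i$ and $\Ga$ are Lipschitz in $z$, this combined drift is Lipschitz in $z$, the regularity requirements of the relevant example are met, and its time-nonhomogeneous form produces a strongly continuous backward propagator satisfying \eqref{BDD} with dual preserving $\P(\R^d)$. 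Block-diagonality then assembles the $K$ propagators into one on the product space, completing part~(i). For part~(ii) I would apply Proposition~\ref{propmomentcomp} to each class: the boundedness \eqref{EBDD} assumed for each $A_i$ gives, via Proposition~\ref{proppmomentsfornonlin}, uniformly bounded $p$th moments for each component $\mu_{i,t}$; tightness of all $K$ components is equivalent to relative compactness of the set \eqref{eq1thglobalexistgenkin} in $\P(\R^d\times\{1,\dots,K\})$, which is exactly what Theorem~\ref{thglobalexistgenkin} requires.

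The main obstacle is the drift part of the Lipschitz estimate \eqref{Lip_A}: it is the one point where the $\D^*$-regularity of the feedback $\Ga$ with respect to the anticipated future flow $\{\mu_{\ge t}\}$ is genuinely used. This feedback regularity is not elementary; it is precisely what the sensitivity analysis of the HJB equation in Section~\ref{Sensitivity} is designed to establish. The present theorem takes that Lipschitz property as a hypothesis on $\Ga$, thereby reducing the proof to the routine chaining of Lipschitz and boundedness estimates described above.
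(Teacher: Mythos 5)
Your proposal is correct and follows the same route the paper intends: the paper's own proof is a single sentence asserting that the proofs of the preceding examples "extend straightforwardly to this multi-agent setting," and your write-up simply carries out that verification in detail (block-diagonal reduction, chaining the Lipschitz hypotheses on $h_i$ and $\Ga$ to get \eqref{Lip_A}, freezing the flow to invoke the examples' generation results for \eqref{BDD}, and applying Proposition \ref{propmomentcomp} componentwise). Your closing remark correctly identifies that the feedback regularity of $\Ga$ is taken as a hypothesis here and is only established later in Section \ref{Sensitivity}.
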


\begin{proof}
Looking at the proofs of all above statements, one sees that they extend straightforwardly to this multi-agent setting.
\end{proof}


\section{Cauchy problem for HJB equations}
\label{Cauchy HJB}

In this section, we are concerned with the general well-posedness of the Cauchy problem for the HJB equation (\ref{HJB}). More precisely, we shall prove that there exists a unique mild solution $V(t,x)$ to the Cauchy problem
\begin{gather}
\label{general HJB}
\begin{cases}
-\frac{\partial{V}}{\partial{t}} = H_t(x,\nabla V) +L_tV\\
V|_{t=T}=V^T(\cdot)
\end{cases}
\end{gather}
with rather general Hamiltonian $H_t$, assuming that the propagator generated by $L_t$ has certain smoothing property. Here we mostly follow Chapter 7 in \cite{VK2}, preparing the setting for the sensitivity analysis of the next section.

\begin{remark}
In the control theory , Hamiltonians appear in form \eqref{HJB 2a}, i.e. as
\begin{equation}
\label{HJB 2b}
H_t(x,p)=\max_{u\in\U} ( h(t,x,u)p+J(t,x,u)).
\end{equation}
where $u$ are controls, $x,p\in \mathbf{R^d}$ and $t>0$.
\end{remark}

It is well known and easy to show (by Duhamel's principle) that if $V(t,x)$ is a classical (smooth) solution to
\eqref{general HJB}, then it also satisfies the following {\it mild form} of \eqref{general HJB}.

\begin{equation}
\label{mild_value_function}
V(t,x) = (U^{t,T} V^T(\cdot))(x) + \int_t^T U^{t,s}H_s(\,\cdot\,,\nabla V(s,\cdot))(x) ds.
\end{equation}
 This integral form is usually easier to analyze than the original equation \eqref{general HJB} and is sufficient for applications to optimal control problems. Hence we shall deal here with the well-posedness of the mild equation.

\begin{theorem}
\label{weak_existence}
Suppose

(1) $H_t(x,p)$ is continuous in t, Lipschitz continuous in $p$ uniformly in x, i.e.
\begin{equation}
\label{eq1thweak_existence}
\|H_t(x,p)-H_t(x,p')\|\leq c_1\|p-p'\|, \quad p,p'\in \mathbf{R}^d,\, \forall x\in \mathbf{R}^d, t\in[0,T]
\end{equation}
with a positive constant $c_1$ and there exists a constant $h>0$ such that
\begin{equation}
\label{eq2thweak_existence}
||H_t(x,0)||\leq h\quad \forall x\in \mathbf{R}^d,\quad t\in[0,T];
\end{equation}

(2) the operators $L_t$ generate a strongly continuous backward propagator ${U}^{t,s}$ in the Banach space $C_{\infty}(\R^d)$ with a common invariant domain $\D\subset C^1_\infty(\R^d)$, and the subspace $C_{\infty}^1(\R^d)$ is also invariant, so that $U^{t,s}$ are also strongly continuous in $C^1_{\infty}$ and
\begin{equation}
\label{eq3thweak_existence}
\|{U}^{t,s}\phi\|_{C_{\infty}^1(\R^d)} \leq c_2, \quad \|{U}^{t,s}\phi\|_{C_{\infty}(\R^d)} \leq c_2
\end{equation}
for all $t\leq s\leq T$ and a constant $c_2>0$.

(3) the operators ${U}^{t,s}$ map $C_{\infty}(\mathbf{R}^d)$ to $C_{\infty}^1(\mathbf{R}^d)$ for $t<s$, and
\begin{equation}
\label{smooth property}
\|{U}^{t,s}\phi\|_{C_{\infty}^1(\mathbf{R}^d)} \leq w(s-t)\|\phi\|_{C_{\infty}(\mathbf{R}^d)},
\end{equation}
for $t<s<T$ and for all $\phi\in C_{\infty}(\mathbf{R}^d)$, with an integrable positive function $w$ on $[0,T]$.

Then for any terminal data $V^T(\cdot)\in C_{\infty}^1(\mathbf{R}^d)$, there exists a unique solution $V(t,x)$ to mild equation (\ref{mild_value_function}), which is of class $C_{\infty}^1(\mathbf{R}^d)$ for all $t$.
\end{theorem}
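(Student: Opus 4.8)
The plan is to solve the mild equation \eqref{mild_value_function} by a fixed point argument on the space $C([0,T],C^1_\infty(\R^d))$. Define the integral operator
\begin{equation*}
(\Psi V)(t,\cdot) = U^{t,T}V^T(\cdot) + \int_t^T U^{t,s}H_s(\,\cdot\,,\nabla V(s,\cdot))\,ds,
\end{equation*}
so that a solution to \eqref{mild_value_function} is exactly a fixed point of $\Psi$. First I would check that $\Psi$ maps a suitable closed ball of $C([0,T],C^1_\infty(\R^d))$ into itself. The first term is controlled by \eqref{eq3thweak_existence}, giving $\|U^{t,T}V^T\|_{C^1_\infty}\le c_2\|V^T\|_{C^1_\infty}$. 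For the integral term the point is that the smoothing estimate \eqref{smooth property} lets one bound the $C^1_\infty$-norm of $U^{t,s}H_s(\cdot,\nabla V)$ by $w(s-t)\|H_s(\cdot,\nabla V)\|_{C_\infty}$; combined with the Lipschitz bound \eqref{eq1thweak_existence} and the growth bound \eqref{eq2thweak_existence}, one gets $\|H_s(\cdot,\nabla V(s,\cdot))\|_{C_\infty}\le h + c_1\|\nabla V(s,\cdot)\|_{C_\infty}\le h + c_1\|V\|_{C([0,T],C^1_\infty)}$, which is finite on a ball. Integrability of $w$ then makes $\int_t^T w(s-t)\,ds$ finite, so $\Psi$ preserves a large enough ball.

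Next I would prove contractivity. Given two candidates $V,V'$, estimate
\begin{equation*}
\|(\Psi V)(t,\cdot)-(\Psi V')(t,\cdot)\|_{C^1_\infty}
\le \int_t^T w(s-t)\,\|H_s(\cdot,\nabla V)-H_s(\cdot,\nabla V')\|_{C_\infty}\,ds,
\end{equation*}
and apply \eqref{eq1thweak_existence} to bound the integrand by $c_1 w(s-t)\|V(s,\cdot)-V'(s,\cdot)\|_{C^1_\infty}$. The key point is that $\int_0^{T} w(\tau)\,d\tau$ can be made small by taking $T$ small, so on a short time interval $\Psi$ is a genuine contraction and Banach's fixed point theorem yields a unique mild solution there. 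The standard device to reach arbitrary $T$ is to run this argument backward from the terminal time $T$ in steps of a fixed small length and glue the pieces; since the estimate is uniform in the starting point of each subinterval (the bound on $\int w$ over an interval of fixed length does not depend on its location), finitely many steps cover $[0,T]$ and give global existence and uniqueness in $C([0,T],C^1_\infty(\R^d))$.

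The main obstacle I anticipate is handling the \emph{gradient} norm honestly rather than just the sup-norm: the nonlinearity $H_s$ is only assumed Lipschitz in $p=\nabla V$, so the difference $H_s(\cdot,\nabla V)-H_s(\cdot,\nabla V')$ is controlled only in $C_\infty$, not in $C^1_\infty$. This is precisely why the smoothing hypothesis \eqref{smooth property} is indispensable: it upgrades a $C_\infty$-bound on the input of $U^{t,s}$ to a $C^1_\infty$-bound on the output, at the cost of the integrable singular factor $w(s-t)$. The only delicate technical checks are therefore (a) that $t\mapsto\nabla V(t,\cdot)$ is genuinely continuous into $C_\infty$ so that the composition $s\mapsto H_s(\cdot,\nabla V(s,\cdot))$ is a well-defined continuous $C_\infty$-valued integrand (using continuity of $H_t$ in $t$ together with \eqref{eq1thweak_existence}), and (b) that the map $t\mapsto(\Psi V)(t,\cdot)$ lands in $C^1_\infty$ and is continuous in $t$, which follows from strong continuity of $U^{t,s}$ on $C^1_\infty$ per hypothesis (2) together with dominated convergence applied to the $w$-weighted integral. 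Once these continuity points are settled, the contraction estimate is routine.
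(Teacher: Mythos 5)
Your proposal is correct and follows essentially the same route as the paper's proof: the same integral map $\Psi$, the same self-mapping bound $\|H_s(\cdot,\nabla\phi_s)\|_{C_\infty}\le h+c_1\|\phi_s\|_{C^1_\infty}$ combined with the smoothing estimate, the same contraction estimate with factor $c_1\int_t^T w(s-t)\,ds$ made small near the terminal time, and extension to all of $[0,T]$ by iteration. The additional continuity checks you flag are sound and only elaborate on what the paper leaves implicit.
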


\proof
We shall write sometimes shortly $C_{\infty}^1$ and $C_{\infty}$ for $C_{\infty}^1(\R^d)$ and $C_{\infty}(\R^d)$ respectively when no confusion may arise.
 Define an operator $\Psi$ acting on $C_{V^T}^T([0,T], C_{\infty}^1(\R^d))$ by the formula
\begin{equation}
\label{mildformpsi1}
\Psi_t[\phi](x)= (U^{t,T} V^T)(x) + \int_t^T U^{t,s}H_s(\cdot,\nabla \phi_s)(x)ds
\end{equation}
We claim that $\Psi$ maps $C_{V^T}^T([0,T], C_{\infty}^1(\R^d))$ to itself.

By the triangle inequality and \eqref{eq1thweak_existence},
\begin{eqnarray}
\|H_t(x,\nabla \phi_t)\|_{C_{\infty}} &\leq& \|H_t(x,0)\|_{C_{\infty}}
+\|H_t(x,\nabla \phi_t)-H_t(x,0)\|_{C_{\infty}} \nonumber \\
              &\leq& h+c_1\|\nabla \phi_t \|_{C_{\infty}} \nonumber \\
              &\leq & h+c_1\| \phi_t \|_{C_{\infty}^1}. \nonumber
\end{eqnarray}
From assumptions (2)-(3),
\begin{equation}
\label{eq4thweak_existence}
\begin{split}
\|\Psi(\phi_t)\|_{C_{\infty}^1} & \leq \|{U}^{t,T}V^T(\cdot)\|_{{C_{\infty}^1}}+\int_t^T \| U^{t,s} H_t(\cdot,\nabla \phi_s)\|_{C_{\infty}^1} ds \\
&\leq c_2 \|V^T \|_{C_{\infty}^1} + \int_t^T w(s-t)\|H_s(\cdot,\nabla \phi_s)\|_{C_{\infty}} ds\\
& \leq c_2 \|V^T \|_{C_{\infty}^1} + (h+c_1\sup_{t\leq s\leq T}\|\phi_s\|_{C_{\infty}^1})\int_t^T w(s-t)ds.
\end{split}
\end{equation}
Hence
\[
\Psi: C_{V^T}^T([0,T], C_{\infty}^1(\R^d))\mapsto C_{V^T}^T([0,T], C_{\infty}^1(\R^d)).
\]

Furthermore, by equation \eqref{eq1thweak_existence} and \eqref{smooth property},
\begin{equation}
\label{eq5thweak_existence}
\begin{split}
\|\Psi_t(\phi^1_.)-\Psi_t(\phi^2_.)\|_{C_{\infty}^1}
&\leq \int_t^T \| U^{t,s}[H_s(\cdot,\nabla \phi^1)-H_s(\cdot,\nabla \phi^2)]\|_{C_{\infty}^1}ds\\
&\leq \int_t^T c_1w(s-t)\|\nabla \phi_s^1-\nabla \phi_s^2 \|_{C_{\infty}}ds\\
&\leq c_1\sup_{t\leq s\leq T} \| \phi_s^1-\phi_s^2 \|_{C_{\infty}^1}\int_t^T w(s-t)ds
\end{split}
\end{equation}
for $\phi^1,\phi^2 \in C_{V^T}^T([0,T], C_{\infty}^1(\R^d))$. So the mapping $\Psi_t$ is a contraction in $C_{V^T}^T([0,T], C_{\infty}^1(\R^d))$ for time $t\in [T-t_0, T]$ with $t_0>0$ small enough. Consequently $\Psi$ has a unique fixed point for $t\in [T-t_0, T]$. The well-posedness on the whole interval $[0,T]$ is proved, as usual, by iterations. \qed

\begin{remark}
If each operator $U^{t,s}$ has a kernel, e.g. it is given by
\begin{equation}
U^{t,s} f(x)= \int G(t,s,x,y) f(y) dy
\end{equation}
with a certain Green's function $G$, such that
\begin{equation}
\| \nabla_x G(t,s,x,\cdot) \|_{L^1(\R^d)}\leq w(s-t)
\end{equation}
then the smoothing condition (\ref{smooth property}) holds.
\end{remark}

By the well posedness of equation \eqref{mild_value_function}, its solutions define a propagator
in $C^1_{\infty}(\R^d)$. This implies, by the standard results (see e.g. \cite{FlSo}),
that these solutions are viscosity solutions to the original equation \eqref{general HJB}.
Moreover, for $H$ of form \eqref{HJB 2b}, these viscosity solutions solve the corresponding optimization problem.
That is all we need for our purposes, allowing us to work further only with mild solutions constructed above.

\begin{remark}
 If we want a solution of mild equation (\ref{mild_value_function}) to solve the original
 HJB equation (\ref{general HJB}), one has to make some additional regularity assumptions,
 see e.g. Theorem 7.8.2 of \cite{VK2} for detail.
\end{remark}

We would like to mention that Example \ref{OAMO} and Example \ref{integralform}  in Section \ref{examples} do not fit into our setting for the well-poseness of HJB equation \eqref{general HJB}, since the smoothing property \eqref{smooth property} is hard to ensure for this examples. In the present paper we shall not develop an appropriate framework for HJB equation that would fit to these examples.

Our main examples are summarized as follows.

\begin{prop}
\label{proponsmoothing}
(i) Under the assumptions of Propositions \ref{NLP} and \ref{propnonlindif} suppose the matrix $G$ is uniformly positive
and is $C^1$ as function $x$ in the second case. Then the corresponding propagator (for any fixed $\{\mu_t\}_{t\in[0,T]}$)
has the smoothing property \eqref{smooth property}.
(ii) Under the assumptions of Propositions \ref{propstablelikesmooth}, suppose the measure $\om$ has an everywhere positive density with respect to the Lebesgue measure on $S^{d-1}$. Then the corresponding propagator (for any fixed $\{\mu_t\}_{t\in[0,T]}$)
has the smoothing property \eqref{smooth property}.
\end{prop}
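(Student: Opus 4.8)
The plan is to establish the smoothing estimate \eqref{smooth property}, namely $\|U^{t,s}\phi\|_{C^1_\infty}\le w(s-t)\|\phi\|_{C_\infty}$ with $w$ integrable, by exhibiting for each case a Green's function (heat-type kernel) $G(t,s,x,y)$ for the propagator and controlling the $L^1$-norm of its spatial gradient, as the final Remark before the statement already indicates is sufficient. The two cases are driven by genuinely different smoothing mechanisms: in (i) the regularizing effect comes from the nondegenerate Gaussian (second-order) part, whereas in (ii) it comes from the stable-like (fractional) integral part whose spectral density on $S^{d-1}$ is bounded below. In both cases I would not reprove the existence of the propagator --- that is guaranteed by Propositions \ref{NLP}, \ref{propnonlindif} and \ref{propstablelikesmooth} respectively --- but only the quantitative gradient bound on its kernel.

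For case (i), with $\{\mu_t\}$ frozen, the generator is a second-order elliptic operator with uniformly positive $G(t,x,\mu_t)$ (and, in the McKean--Vlasov case, $C^1$ in $x$). First I would invoke the classical parametrix construction for such non-degenerate parabolic operators with Hölder (here $C^1$, hence Hölder) coefficients; this yields a Green's function $G(t,s,x,y)$ satisfying the standard Gaussian upper bounds together with the first-derivative estimate $|\nabla_x G(t,s,x,y)|\le C(s-t)^{-(d+1)/2}\exp(-c|x-y|^2/(s-t))$. Integrating in $y$ gives $\|\nabla_x G(t,s,x,\cdot)\|_{L^1}\le C(s-t)^{-1/2}=:w(s-t)$, which is integrable on $[0,T]$ since $1/2<1$. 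Applying the Remark after Theorem \ref{weak_existence} then delivers \eqref{smooth property}. I would quote the parametrix bounds from a standard reference (e.g. Friedman, or the treatment in \cite{VK2}) rather than rederiving them.

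For case (ii), the mechanism is the stable-like part. Under the hypothesis that $\om_t$ has an everywhere positive density on $S^{d-1}$, the principal symbol of the integral operator is comparable to $-c|p|^{\al_t(x,s)}$, uniformly elliptic of fractional order $\al\in(0,2)$; this is precisely the nondegeneracy condition under which the pseudo-differential/parametrix machinery for stable-like generators (Proposition 4.6.2 and Chapter 5 of \cite{VK2}) produces a transition kernel with integrable gradient. The expected derivative bound is $\|\nabla_x G(t,s,x,\cdot)\|_{L^1}\le C(s-t)^{-1/\al}$, so one takes $w(s-t)=C(s-t)^{-1/\al}$; since $\al<2$ one has $1/\al>1/2$ but still, crucially, $\al>1$ would be needed for raw integrability --- so for $\al\le 1$ I would instead use the sharper two-sided stable kernel estimates, which give gradient decay in $|x-y|$ fast enough that the $L^1$ gradient norm is controlled by a power $(s-t)^{-1/\al}$ that is integrable precisely when one works with the order-$\al$ scaling of the stable semigroup; the positive lower bound on the angular density is exactly what guarantees the kernel is smooth and these estimates hold uniformly in $x$. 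Again \eqref{smooth property} follows via the kernel Remark.

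The main obstacle I anticipate is the integrability of $w$ near $s=t$ in case (ii) for small $\al$: the naive gradient bound $(s-t)^{-1/\al}$ is \emph{not} integrable when $\al\le 1$, so the argument cannot rest on this crude estimate alone. The resolution is that the relevant norm in \eqref{smooth property} is $C_\infty\to C^1_\infty$, and the sharp anisotropic kernel estimates for stable-like processes (with the uniform ellipticity furnished by the positive density of $\om_t$) give a gradient whose $L^1$-norm in $y$ behaves like $(s-t)^{-1/\al}$ only through the scaling variable, while the effective singularity controlling integrability is milder; one must therefore extract the precise exponent from the stable-like parametrix rather than a dimensional guess. Verifying that this sharp exponent stays integrable on $[0,T]$, uniformly over the frozen measure flow $\{\mu_t\}$ and over $x$, is the delicate point, and it is where I would lean most heavily on the explicit constructions in \cite{VK2}.
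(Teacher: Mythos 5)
Your overall strategy coincides with the paper's: the authors dispose of (i) with ``well known in the theory of diffusion processes'' and of (ii) with a citation to \cite{Ko00} (see also \cite{VK2}), which is exactly the content you spell out --- the Gaussian parametrix bound $\|\nabla_x G(t,s,x,\cdot)\|_{L^1}\le C(s-t)^{-1/2}$ for the uniformly elliptic case and the stable-like parametrix for the jump case, fed into the Remark following Theorem \ref{weak_existence}. For case (i) your outline is complete and correct.

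For case (ii), however, there is a genuine gap. You correctly extract the rate $\|\nabla_x G(t,s,x,\cdot)\|_{L^1}\le C(s-t)^{-1/\al}$ from the stable-like kernel estimates and correctly note that this is not integrable when $\al\le 1$; but the ``resolution'' you then offer --- that the ``effective singularity controlling integrability is milder'' than $(s-t)^{-1/\al}$ --- is not an argument, and it is in fact false. By self-similarity of the $\al$-stable kernel, $p_t(x)=t^{-d/\al}\,p_1(t^{-1/\al}x)$, one has $\int|\nabla_x p_t(x)|\,dx=t^{-1/\al}\int|\nabla p_1(y)|\,dy$ exactly, so $t^{-1/\al}$ is the sharp rate of blow-up of the $C_\infty\to C^1_\infty$ norm; no refinement of the two-sided anisotropic estimates can lower this exponent, and the variable-order perturbation terms in the parametrix only add milder corrections to this leading behaviour. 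The correct conclusion is that the smoothing property \eqref{smooth property}, with $w$ integrable as Theorem \ref{weak_existence} requires, holds for the stable-like class only when the order function is bounded below by some $\al_0>1$, in which case $w(s-t)=C(s-t)^{-1/\al_0}$ serves. This restriction is left implicit in the paper, whose entire proof of (ii) is the citation to \cite{Ko00}; in your write-up you should either impose $\inf\al_t(x,s)>1$ explicitly or refrain from claiming the result for $\al\le 1$, rather than asserting an unquantified improvement of the kernel estimates.
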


\begin{proof}
Statement (i) is well known in the theory of diffusion processes. Statement (ii) is proved in
\cite{Ko00}, see also \cite{VK2}.
\end{proof}


\section{Sensitivity analysis for HJB equations}
\label{Sensitivity}

For using nonlinear Markov processes as a modelling tool in concrete problems (e.g. mean field games), it is important to be able to assess how sensitive the behaviour of the process is when the key parameters of the model change. Ideally one would like to have some kind of smooth dependence of evolution on these parameters. In this section, we shall discuss the sensitivity of solutions of HJB equations.

\begin{theorem} \label{Sensitivity 1}
Suppose a continuous mapping $(t,\alpha)\mapsto L[t,\alpha]$ from $[0,T]\times \R$ to bounded linear operators $L[t,\alpha]:\D\mapsto C_{\infty}(\mathbf{R}^d)$ is given, $\D \subset C^1_{\infty}(\R^d)$, such that

(i) the linear operators $L[t,\alpha]$ are differentiable with respect to $\alpha$, i.e. $\partial L/\partial \alpha[t,\alpha]$ exist and represent a continuous (in $t,\al$) family of uniformly bounded (by a constant $c_1>0$) operators $\D \to C_{\infty}(\mathbf{R}^d)$;

(ii) for any $\alpha \in \R$, the operator curve $L[t,\al]:\D\rightarrow C_{\infty}(\mathbf{R}^d)$ generates a strongly continuous backward propagator of bounded linear operators $U^{t,s}_\al$ in the Banach space $C_{\infty}(\mathbf{R}^d)$ with the common invariant domain $\D$, such that,
for any $\al$, and $t\leq s$
\begin{equation}\label{BDD 2}
		\max \{||U^{t,s}_\al||_{\D},||U^{t,s}_\al||_{C_\infty}, ||U^{t,s}_\al||_{C^1_\infty}\}\leq \textbf{K}
	\end{equation}
for some constant $\textbf{K}> 0$, and with their dual propagators $\tilde{U}^{s,t}_\al$ preserving the set $\PC(\R^d)$.

(iii) the operators $U^{t,s}_\al: C_{\infty}(\mathbf{R}^d)\mapsto C^1_{\infty}(\mathbf{R}^d)$, $0 \leq t < s$, and
\begin{equation}
\label{smooth property 2}
\|{U}^{t,s}_\al\phi\|_{C_{\infty}^1(\mathbf{R}^d)} \leq w(s-t)\|\phi\|_{C_{\infty}(\mathbf{R}^d)},
\end{equation}
for $t<s<T$ and for all $\phi\in C_{\infty}(\mathbf{R}^d)$, with an integrable positive function $w$ on $[0,T]$.

(iv) for any $\al\in\R$, $V_\alpha^T\in \D$ and $\partial V_\alpha^T/\partial \alpha$ exists in $C_{\infty}^1(\mathbf{R}^d)$ and represents a continuous mapping $\al\mapsto C_{\infty}^1(\mathbf{R}^d)$.

Then, for
\begin{equation}
\label{smooth property 3}
V_{\al}(t,\cdot)=U^{t,T}_{\al} V^T_{\al}(\cdot)\in\D, \quad \forall t\in[0,T]
\end{equation}
the derivative $\partial{V_\alpha}/\partial{\alpha}(t,\cdot)$ exists in the topology of the Banach space $C^1_{\infty}(\mathbf{R}^d)$ and is uniformly bounded in $C^1_{\infty}(\mathbf{R}^d)$.
\end{theorem}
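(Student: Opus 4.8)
The plan is to differentiate the formula $V_\al(t,\cdot)=U^{t,T}_\al V^T_\al$ in $\al$ by writing a difference quotient and separating the two sources of $\al$-dependence, the propagator and the terminal datum. First I would guess the value of the derivative: the product/Duhamel heuristic suggests
\[
W_\al(t,\cdot)=U^{t,T}_\al \frac{\pa V^T_\al}{\pa \al}+\int_t^T U^{t,\tau}_\al\, \frac{\pa L[\tau,\al]}{\pa \al}\, V_\al(\tau,\cdot)\, d\tau .
\]
Every term here is well defined in $C^1_\infty(\R^d)$: since $\D$ is invariant, $V_\al(\tau,\cdot)=U^{\tau,T}_\al V^T_\al\in\D$, so $(\pa L/\pa\al)[\tau,\al]V_\al(\tau,\cdot)\in C_\infty(\R^d)$ by (i), and the smoothing bound \eqref{smooth property 2} turns this into a $C^1_\infty$-element whose norm is integrable in $\tau$; the first term lies in $C^1_\infty$ by \eqref{BDD 2} and (iv). To identify $W_\al$ with $\pa V_\al/\pa\al$ I would write
\[
\frac{V_{\al+\ep}(t,\cdot)-V_\al(t,\cdot)}{\ep}
=U^{t,T}_{\al+\ep}\,\frac{V^T_{\al+\ep}-V^T_\al}{\ep}
+\frac{1}{\ep}\bigl(U^{t,T}_{\al+\ep}-U^{t,T}_\al\bigr)V^T_\al
\]
and pass to the limit $\ep\to0$ in the $C^1_\infty$-topology in each summand.

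The second summand is where Proposition \ref{prop-propergatorProperty} carries the load. Applying it with $L^1=L[\cdot,\al]$, $L^2=L[\cdot,\al+\ep]$ (legitimately, since $V^T_\al\in\D$) gives
\[
\frac{1}{\ep}\bigl(U^{t,T}_{\al+\ep}-U^{t,T}_\al\bigr)V^T_\al
=\int_t^T U^{t,\tau}_{\al+\ep}\,\frac{L[\tau,\al+\ep]-L[\tau,\al]}{\ep}\,V_\al(\tau,\cdot)\,d\tau .
\]
By the fundamental theorem of calculus, the uniform bound $c_1$ on $\pa L/\pa\al$, and its joint continuity in $(t,\al)$ from (i), the quotient $\ep^{-1}(L[\tau,\al+\ep]-L[\tau,\al])$ converges to $(\pa L/\pa\al)[\tau,\al]$ in $\|\cdot\|_{\D\to C_\infty}$ uniformly in $\tau$; applied to the $\D$-bounded curve $V_\al(\tau,\cdot)$, the inner factor then converges in $C_\infty$ uniformly in $\tau$. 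Writing the integrand against $U^{t,\tau}_\al$ of the limiting inner factor as a sum of the smoothing term $U^{t,\tau}_{\al+\ep}$ applied to the $C_\infty$-small difference (controlled by $w(\tau-t)$ via \eqref{smooth property 2}) plus $(U^{t,\tau}_{\al+\ep}-U^{t,\tau}_\al)$ applied to the fixed limit, and invoking dominated convergence with the integrable majorant $2w(\tau-t)$, this summand converges in $C^1_\infty$ to the integral appearing in $W_\al$.

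The genuinely delicate point, and the main obstacle, is the last replacement: passing from $U^{t,\tau}_{\al+\ep}$ to $U^{t,\tau}_\al$ in the $C^1_\infty$-topology, i.e. continuous dependence of the propagator on $\al$ measured in $C^1_\infty$ rather than in $C_\infty$. Proposition \ref{prop-propergatorscontdep} only delivers strong continuity in $C_\infty$, so I would prove an auxiliary claim: for $t<s$ and any $g\in C_\infty$, the map $\al\mapsto U^{t,s}_\al g$ is continuous into $C^1_\infty$. The device is density-plus-smoothing. Given $g$, pick $g'\in\D$ with $\|g-g'\|_{C_\infty}$ small ($\D$ being dense in $C_\infty$); then $\|(U^{t,s}_{\al+\ep}-U^{t,s}_\al)(g-g')\|_{C^1_\infty}\le 2w(s-t)\|g-g'\|_{C_\infty}$ by \eqref{smooth property 2}, uniformly in $\ep$, while for $g'\in\D$ Proposition \ref{prop-propergatorProperty} applies again and, with $\|L[\cdot,\al+\ep]-L[\cdot,\al]\|_{\D\to C_\infty}\le c_1|\ep|$ and \eqref{smooth property 2}, makes $\|(U^{t,s}_{\al+\ep}-U^{t,s}_\al)g'\|_{C^1_\infty}=O(|\ep|)$ for fixed $g'$. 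The same device handles the first summand, where $\ep^{-1}(V^T_{\al+\ep}-V^T_\al)\to\pa V^T_\al/\pa\al$ in $C^1_\infty$ by (iv) and $\|U^{t,T}_{\al+\ep}\|_{C^1_\infty}\le\mathbf{K}$ by \eqref{BDD 2}, yielding the limit $U^{t,T}_\al\,\pa V^T_\al/\pa\al$. Finally, uniform boundedness of $\pa V_\al/\pa\al=W_\al$ in $C^1_\infty$ (uniformly in $t\in[0,T]$) is read off directly: the first term is bounded by $\mathbf{K}\|\pa V^T_\al/\pa\al\|_{C^1_\infty}$, and the integral term by $c_1\mathbf{K}\,\|V^T_\al\|_{\D}\int_0^T w(r)\,dr<\infty$.
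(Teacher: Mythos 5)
Your proposal is correct and follows essentially the same route as the paper: both identify the derivative with the Duhamel representation $U^{t,T}_\alpha(\partial V^T_\alpha/\partial\alpha)+\int_t^T U^{t,s}_\alpha\,(\partial L/\partial\alpha)[s,\alpha]\,V_\alpha(s,\cdot)\,ds$, bound it in $C^1_\infty$ via the smoothing estimate \eqref{smooth property 2}, and upgrade the strong continuity of $\al\mapsto U^{t,s}_\al$ from $C_\infty$ to $C^1_\infty$ by the same density-plus-smoothing device. The only difference is presentational: you obtain the formula as the genuine limit of the difference quotient through Proposition \ref{prop-propergatorProperty}, whereas the paper derives it by formally differentiating the backward evolution equation and then appeals to continuity in $\al$ together with the uniform bounds to identify the candidate with the derivative, so your version is, if anything, more explicit at that final identification step.
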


\proof For notational simplicity, denote
$$V'_\alpha (t,x)=\frac{\partial V_\alpha}{\partial \alpha}(t,x).$$
Differentiating both sides of the equation
$$\frac{\partial{V_\alpha}}{\partial{t}}(t,x)=- L[t,\alpha]V_\alpha (t,x)$$
with respect to $\alpha$ yields
$$\frac{\partial}{\partial t}V'_\alpha (t,x) = - L[t,\alpha]V'_\alpha (t,x)-\frac{\partial{L}}{\partial{\alpha}}[t, \alpha]V_\alpha (t,x).$$

Then by the Duhamel principle,

\begin{equation}
\label{smooth property 4}
V'_\alpha (t,x) = {U}_\alpha^{t,T} (V^T_\alpha)' (x) +\int_t^T {U}^{t,s}_{\alpha}(\frac{\partial{L}}{\partial{\alpha}}[s,\alpha]V_\alpha )(s,x)ds.
\end{equation}
Since $(V_\alpha^T)'(\cdot)\in C_{\infty}^1(\mathbf{R}^d)$ and the family ${U}^{t,T}_{\alpha}$ is bounded as a family of  mappings from $C_{\infty}^1(\mathbf{R}^d)$ to $C_{\infty}^1(\mathbf{R}^d)$, ${U}^{t,T}_{\alpha}V_\alpha'(T,\cdot)$ belongs to  $C_{\infty}^1(\mathbf{R}^d)$ and is uniformly bounded for bounded $t, T$.

Also since for each pair $(t,\alpha)\in [0,T]\times \R$,  the operator $\partial A/\partial \alpha[t,\alpha]:\D\mapsto C_{\infty}(\mathbf{R}^d)$ and for $0< t< s< T$, the operator ${U}^{t,s}_{\alpha}: C_{\infty}(\mathbf{R}^d)\mapsto C_{\infty}^1(\mathbf{R}^d)$, we have
\[
{U}^{t,s}_{\alpha}(\frac{\partial{L}}{\partial{\alpha}}[s,\alpha]V_\alpha )(s,x)\in C_{\infty}^1(\mathbf{R}^d).
\]

From the assumptions \eqref{BDD 2} and \eqref{smooth property 2},  the following inequality
\begin{equation}
\begin{split}
||V'_\alpha (t,\cdot)||_{C_{\infty}^1} \leq& || {U}_\alpha^{t,T} (V^T_\alpha)' (\cdot)||_{C_{\infty}^1} \\
    &+\int_t^T ||{U}^{t,s}_{\alpha}(\frac{\partial{L}}{\partial{\alpha}}[s,\alpha]V_\alpha )(s,\cdot)||_{C_{\infty}^1}ds\\
    \leq &  \textbf{K} \|V^T\|_{C^1_{\infty}}+\int_t^Tw(s-t)\|\frac{\partial{L}}{\partial{\alpha}}[s,\alpha]V_\alpha (s,\cdot)\|_{C_\infty(\R^d)}ds\\
    \leq &(\textbf{K}+c_1\int_t^Tw(s-t)ds) \|V^T\|_{\D}
\end{split}
\end{equation}
gives us the uniform boundedness of the derivative $V'_\alpha (t,\cdot)$ in $C_{\infty}^1(\mathbf{R}^d)$,
showing that $V'_\alpha (t,\cdot) \in C_{\infty}^1(\mathbf{R}^d)$.

Finally, using smoothing property \eqref{smooth property 2} in conjunction with the arguments of Proposition
\ref{prop-propergatorscontdep}, we can strengthen the statement of Proposition \ref{prop-propergatorscontdep}
by obtaining continuous dependence of ${U}^{t,T}_{\alpha}$ not only as operators in $C_{\infty}(\mathbf{R}^d)$,
but also as operators in $C_{\infty}^1(\mathbf{R}^d)$. This implies the
continuity of function \eqref{smooth property 4},
as a mapping $\al \mapsto C_{\infty}^1(\mathbf{R}^d)$. Together with our uniform bounds on \eqref{smooth property 4},
this implies that this function does in fact represent the derivative of $V_\al(t,x)$ with respect to $\al$.
  \qed

Differentiating \eqref{smooth property 3} with respect to $\al$ formally, we get
\begin{equation}
\label{smooth property 5}
V'_\alpha (t,.) = {U}_\alpha^{t,T} (V^T_\alpha)' (.)
+\left(\frac{\partial}{\partial \alpha}U_\alpha^{t,T}\right)(V^T_\alpha).
\end{equation}
Our proof above and in particular formula \eqref{smooth property 4} can be interpreted as saying that
the derivative $\partial U_\alpha^{t,T}/\partial \alpha $ is well defined as a bounded operator
$\D\to C_{\infty}^1(\mathbf{R}^d)$ such that
\begin{equation}
\label{smooth property 6}
\frac{\partial}{\partial \alpha}U_\alpha^{t,T}
=\int_t^T {U}^{t,s}_{\alpha} \frac{\partial{L}}{\partial{\alpha}}[s,\alpha] U_\al^{s,T} \, ds.
\end{equation}

\begin{theorem} \label{Sensitivity 2}

Under the assumptions of Theorem \ref{Sensitivity 1},
suppose a family of Hamiltonian functions $H_{\al,t}(x,p)$ is given that satisfy, for each $\al \in \R$,
all the assumptions of Theorem \ref{weak_existence}, with all estimates being uniform in $\al$.
Assume moreover that the derivative $\pa H_{\al,t}/\pa \al (x, p)$ exists and is a continuous function of all its variables.

Denote by $V_\alpha (t,x)$ the mild solution of the Cauchy problem
\begin{gather}
\begin{cases}
\frac{\partial{V_\alpha}}{\partial{t}}(t,x) =- L[t,\alpha]V_\alpha (t,x)-H_{\al,t}(x, \nabla V_\alpha(t,x))\\
V_\alpha|_{t=T}=V_\alpha^T(\cdot),
\end{cases}
\end{gather}
constructed in Theorem \ref{weak_existence}.
Then  the derivative $\partial{V_\alpha}/\partial{\alpha}(t,x)$ exists  and is uniformly bounded in the Banach space $C^1_{\infty}(\mathbf{R}^d)$ (as a function of $x$).
\end{theorem}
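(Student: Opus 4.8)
The plan is to differentiate the mild equation of Theorem \ref{weak_existence} with respect to $\al$ and to show that the resulting \emph{linear} integral equation for $V'_\al := \pa V_\al/\pa\al$ is well posed in $C^1_{\infty}(\mathbf{R}^d)$ by the same contraction argument that produced $V_\al$ itself. Starting from
$$V_\al(t,x) = (U^{t,T}_\al V^T_\al)(x) + \int_t^T \bigl(U^{t,s}_\al H_{\al,s}(\cdot,\nabla V_\al(s,\cdot))\bigr)(x)\, ds,$$
and differentiating formally (chain rule for $H_{\al,s}$ through both its explicit $\al$-dependence and its argument $\nabla V_\al$, and formula \eqref{smooth property 6} for $\pa U^{t,s}_\al/\pa\al$), one arrives at
\begin{equation}
V'_\al(t,\cdot) = \Xi_\al(t,\cdot) + \int_t^T U^{t,s}_\al\Bigl(\frac{\pa H_{\al,s}}{\pa p}(\cdot,\nabla V_\al(s,\cdot))\,\nabla V'_\al(s,\cdot)\Bigr)\, ds,
\end{equation}
where the inhomogeneous term
$$\Xi_\al(t,\cdot) = U^{t,T}_\al (V^T_\al)' + \Bigl(\tfrac{\pa}{\pa\al}U^{t,T}_\al\Bigr)V^T_\al + \int_t^T \Bigl(\tfrac{\pa}{\pa\al}U^{t,s}_\al\Bigr)H_{\al,s}(\cdot,\nabla V_\al)\, ds + \int_t^T U^{t,s}_\al\tfrac{\pa H_{\al,s}}{\pa\al}(\cdot,\nabla V_\al)\, ds$$
collects everything not involving the unknown $V'_\al$.

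First I would check that $\Xi_\al(t,\cdot)$ is a well-defined element of $C^1_{\infty}(\mathbf{R}^d)$, uniformly bounded and continuous in $\al$. This is a matter of assembling estimates already available: boundedness of $V_\al$ in $C^1_{\infty}$ and of $\pa U^{t,s}_\al/\pa\al$ as a map $\D\to C^1_{\infty}$ from Theorem \ref{Sensitivity 1} and \eqref{smooth property 6}; assumption (iv) for $(V^T_\al)'$; hypothesis (i) bounding $\pa L/\pa\al$; and the assumed continuity and uniform boundedness of $\pa H_{\al,s}/\pa\al$ together with the smoothing bound \eqref{smooth property 2}.

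The crucial structural point is that the unknown enters the right-hand side only through its gradient $\nabla V'_\al$, which is a priori one degree rougher, but sits behind the smoothing propagator $U^{t,s}_\al$. Exactly as in Theorem \ref{weak_existence}, the $C^1_{\infty}$-norm of the integral operator applied to $V'_\al$ is bounded by
$$\Bigl(\sup_{s,x}\bigl\|\tfrac{\pa H_{\al,s}}{\pa p}\bigr\|\Bigr)\,\sup_{s}\|V'_\al(s,\cdot)\|_{C^1_\infty}\int_t^T w(s-t)\, ds,$$
using $\|U^{t,s}_\al \phi\|_{C^1_\infty}\le w(s-t)\|\phi\|_{C_\infty}$ and $\|\nabla V'_\al(s)\|_{C_\infty}\le \|V'_\al(s)\|_{C^1_\infty}$. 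Since $\int_t^T w(s-t)\,ds\to0$ as $t\to T$, this is a contraction on the space of curves $[T-t_0,T]\to C^1_{\infty}$ for $t_0$ small, giving a unique fixed point there; the solution then extends to all of $[0,T]$ by the usual iteration, yielding a uniformly bounded candidate $V'_\al\in C^1_{\infty}$.

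The main obstacle is the final, genuinely analytic step: showing that this fixed point really is the derivative $\pa V_\al/\pa\al$, not merely a solution of the formally differentiated equation. I would handle this as at the end of the proof of Theorem \ref{Sensitivity 1}: form the difference quotient $(V_{\al+\ep}-V_\al)/\ep$, subtract the candidate equation, and estimate the remainder in $C^1_{\infty}$. The differentiability hypotheses on $L$, $H$ and $V^T$ in $\al$, together with the continuity in $\al$ of $U^{t,s}_\al$ \emph{as operators in $C^1_{\infty}$} (the strengthening of Proposition \ref{prop-propergatorscontdep} obtained inside Theorem \ref{Sensitivity 1}), control the error terms, and a Gronwall argument in the smoothing norm then forces the difference quotient to converge to $V'_\al$ uniformly in $t$, completing the proof.
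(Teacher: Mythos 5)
Your proposal is correct and follows essentially the same route as the paper: differentiate the fixed-point (mild) equation in $\al$, isolate the inhomogeneous term (your $\Xi_\al$ is the paper's $\Om_\al$) from the linear operator acting on $\nabla V'_\al$ through the smoothing propagator (the paper's $\digamma_\al$), invert $Id-\digamma_\al$ via the smallness of $\int_t^T w(s-t)\,ds$ near $t=T$, and extend to $[0,T]$ by iteration. Your final difference-quotient-plus-Gronwall justification that the fixed point really is $\pa V_\al/\pa\al$ is a slightly more explicit rendering of the paper's appeal to continuity in $\al$ together with the uniform bounds, exactly as in the closing step of Theorem \ref{Sensitivity 1}.
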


\proof
For any $\alpha \in \R$, by Theorem \ref{weak_existence}, the unique solution $V_\alpha (t,x)$ is the unique fixed point of the mapping $\phi \mapsto \Psi_{\al}(\phi)$ defined by
\begin{equation}
\label{mapping Psi}
\Psi^t_{\al}(\phi) = U_\alpha^{t,T} V_\alpha^T + \int_t^T U_\alpha^{t,s}H_{\alpha,s}(.,\nabla \phi_s (.))ds.
\end{equation}
Differentiating with respect to $\al$ the fixed point equation $V_{\al}=\Psi_{\al} (V_\alpha )$ and taking into
account \eqref{smooth property 6} we get the following equation
\begin{equation}
\label{eqdiffixedpoint}
V'_{\al}(t,.)
=\Om^t_\al + \digamma^t_\al (V_{\al}')
\end{equation}
with
\begin{equation}
\label{eqdiffixedpoint1}
\digamma^t_\al (\varphi)=\int_t^T U_\alpha^{t,s} \frac{\pa H_{\al,s}}{\pa p}(., \nabla V_\alpha(s,.)) \nabla \varphi(s,.) \, ds,
\end{equation}
\[
\Om^t_\al=\frac{\partial}{\partial \alpha}\left(U_\alpha^{t,T} V_{\al}^T\right)
+\int_t^T \left(\frac{\partial}{\partial \alpha}U_\alpha^{t,s} \right)H_{\al,s}(., \nabla V_\alpha(s,.)) ds
\]
\[
+\int_t^T U_\alpha^{t,s} \frac{\pa H_{\al,s}}{\pa \al}(., \nabla V_\alpha(s,.)) ds.
\]
Hence formally,
\begin{equation}
\label{eqdiffixedpoint2}
V_{\al}'=(Id-\digamma_\al)^{-1} \Om_\al,
\end{equation}
where $Id$ stands for the identity operator.
By \eqref{eqdiffixedpoint1},
\begin{equation}
\label{eqdiffixedpoint3}
\begin{split}
\sup_{s\in [t,T]} \|\digamma_s (\varphi)\|_{C^1_{\infty}} &\le \int_t^T w(s-t) \sup_{\al,s,z,p}
\left| \frac{\pa H_{\al,s}}{\pa p}(z,p)\right| \sup_{s\in [t,T]} \|\varphi_s\|_{C^1_{\infty}} \, ds\\
&\le c(t) \sup_{s\in [t,T]} \|\varphi_s\|_{C^1_{\infty}}
\end{split}
\end{equation}
with $C(t) \to 0$ as $t\to T$. Hence the inverse operator \eqref{eqdiffixedpoint2} is well defined
for $t$ close enough to $T$ yielding uniform boundedness of the l.h.s. of \eqref{eqdiffixedpoint2}. Together
with the continuity of $V_{\al}'$ in $\al$, this allows us to conclude that the derivative $V_{\al}'(t,.)$ exists in
$C^1_{\infty}$ for all $t$ close to $T$. As usual, this statement extends to all $t$ by iterations.
\qed

\begin{remark} Assuming some continuity in $t$ of the operators
 $U^{t,s}_\al: C_{\infty}(\mathbf{R}^d)\mapsto C^1_{\infty}(\mathbf{R}^d)$, one could further claim that
$V_{\al}'(t,.)$ is also continuous as mapping $(\al, t)\mapsto C^1_{\infty}(\mathbf{R}^d)$, but this does not seem to be crucial.
\end{remark}

We want now to apply these results to our basic mean field setting (in particular that of Subsection \ref{subescdiscrete}), that is to HJB equation \eqref{HJB 2}, \eqref{HJB 2a}:
\begin{equation}
\label{HJB 20}
\frac{\partial V^i (t,x)}{\partial t}+ H_t^i(x,\nabla V^i (x), \mu_t)+ L_i[t,\mu_t]V^i(t,x)=0,
\end{equation}
\begin{equation}
\label{HJB 20a}
H_t^i(x,p, \mu_t):=\max_{u\in\U} \{  h_i(t,x,\mu_t,u)p+J_i(t,x,\mu_t,u)\}.
\end{equation}

This leads to the following crucial result.

\begin{theorem}
\label{thfeedbackHJB}
Under the assumptions for $H^i$ and $L_i$ from Theorem \ref{Sensitivity 2}
(recall that basic examples when smoothing property \eqref{smooth property 2} holds are given in Proposition \ref{proponsmoothing}), assume additionally

(i) $\frac{\delta H^i}{\delta\mu(\cdot)}$ exists and is continuous and uniformly bounded;

(ii) $\frac{\delta L_i}{\delta\mu(\cdot)}$ exists in the topology of $\D^*=(C^2_{\infty}(\mathbf{R}^d))^*$ and represents a continous family of uniformly bounded operators $\D\to C_\infty(\R^d)$;

(iii) the unique point of maximum in \eqref{HJB 20a} is continuous in $t$ and Lipschitz continuous in $(x,p,\mu)$, uniformly with respect to $t$, $x$ and bounded $p$ and $\mu$ (again $\mu$ in the topology of $\D^*$).

Then, given a trajectory $\{\mu.\}\in C_\mu([0,T], \MC(\D^*))$, $\MC=\P(\R^d)$, and a final payoff $V^T$, the feedback control $u=\Gamma(t,x,\{\mu_{\geq t }\})$ defined via equations \eqref{HJB 20} and \eqref{HJB 20a}, is  Lipschitz continuous in $\{\mu.\}$ uniformly, i.e. for any $\{\eta.\},\, \{\xi.\} \in C_\mu([0,T], \MC(\D^*))$
\[
|| \Gamma(t,x, \{\eta_{\geq t}\}-\Gamma(t,x, \{\xi_{\geq t}\}||\leq   k_1\sup_{s\in [t,T]}||\eta_s -\xi_s||_{\D^*}, \quad \forall t\in [0,T],\,x\in \R^d
\]
with some constant $k_1>0$.
\end{theorem}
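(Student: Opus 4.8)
The feedback is the composition $\Ga(t,x,\{\mu_{\ge t}\})=u^*(t,x,\nabla V^i(t,x),\mu_t)$, where $u^*(t,x,p,\mu)$ denotes the unique argmax in \eqref{HJB 20a} and $V^i$ is the mild solution of \eqref{HJB 20} built in Theorem \ref{weak_existence} for the frozen flow $\{\mu_.\}$ (I treat a single class $i$; the argument is identical for each). The plan is to prove that the map $\{\mu_.\}\mapsto V^i$ is Lipschitz from $C_\mu([0,T],\MC(\D^*))$ into $C^1_\infty(\R^d)$, and then to feed the resulting gradient estimate into the Lipschitz bound for $u^*$ granted by assumption (iii). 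Indeed, by (iii),
\[
\|\Ga(t,x,\{\eta_{\ge t}\})-\Ga(t,x,\{\xi_{\ge t}\})\|\le k\big(|\nabla V^\eta(t,x)-\nabla V^\xi(t,x)|+\|\eta_t-\xi_t\|_{\D^*}\big),
\]
where $V^\eta,V^\xi$ are the HJB solutions for the two flows and $k=\mathrm{Lip}(u^*)$. The second term is already dominated by $\sup_{s\in[t,T]}\|\eta_s-\xi_s\|_{\D^*}$, so the whole statement reduces to the sensitivity estimate
\[
\|V^\eta(t,\cdot)-V^\xi(t,\cdot)\|_{C^1_\infty}\le C\sup_{s\in[t,T]}\|\eta_s-\xi_s\|_{\D^*}.
\]

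To obtain this I would linearize along the segment joining the two flows. Since $\MC=\P(\R^d)$ is convex, $\mu^\tau_t:=(1-\tau)\xi_t+\tau\eta_t$ lies in $C_\mu([0,T],\MC(\D^*))$ for every $\tau\in[0,1]$, and $V^\eta-V^\xi=\int_0^1\pa_\tau V^{\mu^\tau}\,d\tau$, so it suffices to bound $\|\pa_\tau V^{\mu^\tau}(t,\cdot)\|_{C^1_\infty}$ uniformly in $\tau$. The derivative $\pa_\tau V^{\mu^\tau}$ is precisely the object produced by Theorem \ref{Sensitivity 2}, now with the scalar parameter $\tau$ entering both $L_i[\cdot,\mu^\tau_\cdot]$ and $H^i_\cdot(\cdot,\cdot,\mu^\tau_\cdot)$. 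By the chain rule together with assumptions (i) and (ii), the $\tau$-derivatives of these data are $\frac{\de L_i}{\de\mu}[\cdot](\eta-\xi)$ and $\frac{\de H^i}{\de\mu}[\cdot](\eta-\xi)$, which by the uniform bounds in (i)--(ii) are controlled in the relevant norms by a constant times $\|\eta_s-\xi_s\|_{\D^*}$.

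With this setup the estimate is established exactly as in the proof of Theorem \ref{Sensitivity 2}. Writing $W^t:=\pa_\tau V^{\mu^\tau}(t,\cdot)$ and differentiating the mild equation \eqref{mild_value_function} for $V^{\mu^\tau}$ in $\tau$, while using formula \eqref{smooth property 6} for $\pa_\tau U^{t,s}_\tau$, one arrives at a fixed-point identity of the form \eqref{eqdiffixedpoint}, namely $W=\Om_\tau+\digamma_\tau(W)$, in which $\digamma_\tau$ carries the factor $\pa H^i/\pa p$ and $\Om_\tau$ collects the explicit $\tau$-derivatives coming from $\pa_\tau L_i$, $\pa_\tau U^{t,s}_\tau$ and $\pa H^i/\pa\tau$. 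By the smoothing property \eqref{smooth property 2}, the bound \eqref{eqdiffixedpoint3} gives $\|\digamma_\tau\|_{C^1_\infty\to C^1_\infty}\le c(t)$ with $c(t)\to0$ as $t\to T$, so $(Id-\digamma_\tau)^{-1}$ exists and is uniformly bounded for $t$ in a terminal interval $[T-t_0,T]$; and every term of $\Om_\tau$ is, again via \eqref{smooth property 2} and (i)--(ii), bounded in $C^1_\infty$ by a constant times $\sup_{s\in[t,T]}\|\eta_s-\xi_s\|_{\D^*}$, uniformly in $\tau$. This yields $\sup_\tau\|W^t\|_{C^1_\infty}\le C\sup_{s\in[t,T]}\|\eta_s-\xi_s\|_{\D^*}$ on $[T-t_0,T]$, and the usual iteration over successive subintervals (as in Theorems \ref{weak_existence} and \ref{Sensitivity 2}) extends it to all of $[0,T]$. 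Integrating in $\tau$ delivers the displayed sensitivity estimate, and combining with the first paragraph finishes the proof.

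The main obstacle is the final closing of the gradient estimate. Because $H^i$ depends on $\nabla V$, the quantity $\nabla W=\nabla\pa_\tau V^{\mu^\tau}$ appears on both sides of the variational equation, so the bound is genuinely implicit; it is only the smoothing property $\|U^{t,s}_\tau\phi\|_{C^1_\infty}\le w(s-t)\|\phi\|_{C_\infty}$ with $w$ integrable that lets one trade a derivative for the integrable kernel and render $\digamma_\tau$ a contraction on a short terminal interval. The remaining delicate point is purely a matter of bookkeeping: checking that the inhomogeneous term $\Om_\tau$ is controlled in the stronger norm $C^1_\infty$ (not merely $C_\infty$) by $\sup_{s}\|\eta_s-\xi_s\|_{\D^*}$, uniformly in $\tau$, which uses the $\D^*$-continuity and uniform boundedness of the functional derivatives posited in (i)--(ii).
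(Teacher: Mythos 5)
Your proposal is correct and follows essentially the same route as the paper: the paper's own (very terse) proof simply invokes Theorem \ref{Sensitivity 2} to get Lipschitz dependence of $\nabla V^i$ on $\{\mu_.\}$ and then applies the Lipschitz property of the argmax from assumption (iii). Your linear interpolation $\mu^\tau_t=(1-\tau)\xi_t+\tau\eta_t$ and the integration in $\tau$ merely make explicit how the scalar-parameter sensitivity result of Theorem \ref{Sensitivity 2}, combined with the variational-derivative bounds in (i)--(ii), yields the required Lipschitz estimate --- a step the paper leaves implicit.
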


\proof
Theorem \ref{Sensitivity 2} shows that the $\nabla V^i $ depends Lipshitz continuously on $\{\mu_.\}$.
Taking into account other assumptions we conclude that the unique point of maximum in
the expression
\[
\max_{u\in\U} \{  h_i(t,x,\mu_t,u)\nabla V^i(t,x,\{\mu_{\ge t}\})+J_i(t,x,\mu_t,u)\}
\]
has required properties.
\qed

\begin{remark}
Lipschitz continuity of the resulting control $\Gamma$ with respect to the parameter $\{\mu.\}$ was called the {\it feedback regularity property} in the pioneering paper \cite{HCM3}, where it was taken for granted as an additional assumption without verification. Our result above justifies this assumption of \cite{HCM3} by proving it in our much more general setting.
\end{remark}

Theorem \ref{thfeedbackHJB} gives
verifiable conditions for the assumption (i) of Theorem \ref{thkineticeq} to hold, which (together
with compactness criterion from Proposition \ref{propmomentcomp} feeding in Theorem \ref{thglobalexistgenkin})
 allows us to establish local well-posedness and global existence of solutions to \eqref{eqkineqmeanfieldnonM}
 in our basic setting of agents with a spatial and discrete component. This
completes tasks T1), T2) from the introduction. From now on we turn
to the $N$-particle approximations to the coupled mean-field evolution given by \eqref{eqkineqmeanfieldnonM}. For notational simplicity, the arguments and results in the following sections will be presented for one class agents, i.e. $K=1$. The extension to arbitrary $K$ is straightforward.


\section{Smooth dependence of nonlinear processes on initial data}
\label{secsmoothdepnonlinMark}

This section makes preparation for the analysis of the convergence of $N$ particle approximation in Section \ref{Convergence}.

Our aim is to prove smoothness of the solutions $\mu_t$ to kinetic equation \eqref{eq1propkineticeqnonlMark} with respect to initial data $\mu_0$. In fact we need the existence and regularity of the first and second order variational derivative of $\mu_t$ with resect to $\mu_0$. As a consequence, we are going to prove that the propagator of linear transformations on functionals of measures of the form
\begin{equation}
\label{eqproponkineq}
(\Phi^{s,t}(F))(\mu_s):=F (\mu_t(\mu_s)), \quad 0\leq s \le t
\end{equation}
(where $\mu_t(\mu_s)$ denotes the solution of the kinetic equation \eqref{eq1propkineticeqnonlMark}
with the initial condition $\mu_s$ at time $s$) preserves the set of twice differentiable functionals. This fact eventually would allow us to compare the propagators of $N$-particle processes with propagator \eqref{eqproponkineq}
of the limiting deterministic process via Proposition \ref{prop-propergatorProperty} with $\D$ being the space of functionals with sufficiently regular second order variational derivative.

We shall follow the line of arguments from \cite{Ko10}. However, essential simplification and, at the same time,
improvements will be achieved, in particular, by extending the results to time dependent family of generators and, most importantly, by weakening the requirements on the regularity of the coefficients of L\'evy-Khintchine type operators
\eqref{fellergenerator 3} with respect to position $z$, which is of principle importance for applications to MFG.

Let us start with an abstract kinetic equation \eqref{eq1propkineticeqnonlMark} in a dual Banach space $\B^*$
satisfying the assumptions of Theorem \ref{propkineticeqnonlMark}.
Given a family of initial data $\mu_0^{\al}$ depending on a real parameter $\al$, we are interested in the derivative
\begin{equation}
\label{eqdefderwithpar}
 \xi_t^\al=\frac{\pa \mu_t^{\al}}{\pa \al},\quad \forall t\geq 0
\end{equation}
of the corresponding solutions $\mu_t^{\al}$ with respect to this parameter.

Differentiating \eqref{eq1propkineticeqnonlMark} (at least formally for the moment)
with respect to $\al$ yields the equation
\begin{equation}
\label{eqforderpar}
 \frac{d}{dt}(f,\xi_t^\al)
 =(A[t,\mu_t^{\al}]f, \xi_t ^\al)+(\mathcal{D}_{\xi_t^\al}A[t,\mu_t^{\al}]f,\mu_t^{\al})
\end{equation}
with the initial condition
\begin{equation}
\label{eqdefderwithparinit}
\xi_0=\xi_0^\al=\frac{\pa \mu_0^{\al}}{\pa \al},
\end{equation}
where
\begin{equation}
\label{eqdeffirstGatderwithinitial}
 \mathcal{D}_{\xi}A[t,\mu]
 = \lim_{s \rightarrow 0_+} \frac{1}{s}(A[t,\mu+s\xi] - A[t,\mu]),\quad \forall \xi\in \B^*
\end{equation}
denotes the Gateaux derivatives of $A[t,\mu]$, where it is assumed that the definition of $A[t,\mu]$ can be extended to a neighborhood of $\M$ in $\D^*$.

The crucial point for what follows is the observation that one cannot expect equation \eqref{eqforderpar} to be well-posed in $\B^*$ making usual results from the theory of vector-valued ODE unapplicable.
Thus we are aiming at solving it in $\D^*$, so that the derivatives with respect to initial data are going to live in a different space than the solution itself. Moreover, even for solving equation \eqref{eqforderpar} in $\D^*$
(as is done in \cite{Ko07}, \cite{Ko11} for time-homogeneous family $A[\mu]$), requires strong smoothness assumptions on the family $A[t,\mu]$, which are not appropriate for analyzing MFG. Hence our plan is to solve (\ref{eqforderpar}) in a mild form, and then show that the solution does represent the derivative \eqref{eqdefderwithpar}.

\begin{assumption}
The Gateaux derivative (\ref{eqdeffirstGatderwithinitial}) exists in the norm-topology $\D^*$ and is regular enough so that, given $\mu \in \B^*$, the bilinear
form  $(\mathcal{D}_{\xi}A[t,\mu]f,\mu)$ on the arguments $\xi \in \D^*$ and $f \in \D$, from  \eqref{eqforderpar} can be written in the form
 \begin{equation}
\label{eqGatbilincont}
 (\mathcal{D}_{\xi}A[t,\mu]f,\mu)= (\phi_t[\mu] f, \xi)
\end{equation}
with a bounded linear operator $\phi_t[\mu]: \D\to \D$.
\end{assumption}

We shall see later that this assumption is satisfied for the case where $A[t,\mu]$ is of L\'evy-Khintchine form \eqref{fellergenerator 3} with coefficients depending smoothly on
$\mu$ (but, what is important, not necessarily so on positions $z$).

 Thus weak forward equation \eqref{eqforderpar} rewrites as
\begin{equation}
\label{eq2forderpar}
 \frac{d}{dt}(f,\xi_t^\al)
 =(A[t,\mu_t^{\al}]f, \xi_t ^\al)+(\phi_t[\mu_t^{\al}] f, \xi_t^\al),
\end{equation}
which is dual to the backward equation in $\D$ of the form
\begin{equation}
\label{eq3forderpar}
 \frac{d}{dt}f_t^{\al}
 =-A[t,\mu_t^{\al}]f_t^{\al}-\phi_t[\mu_t^{\al}]f_t^{\al}.
\end{equation}
Let us stress again that the operator $\phi_t[\mu]$ acts from $\D\to \D$ (and is not defined on $\B$).

As in Section \ref{Cauchy HJB}, assuming the family $A[t,\mu_t^{\al}]$ generates a backward propagator
$U_\al^{t,r}$, $t\leq r$, in $\B$ with the common invariant domain $\D$, with the dual forward propagator $\tilde U^{r,t}_\al$, we can write the
mild forms of equations \eqref{eq3forderpar} with a terminal condition $f_r^\al$ and \eqref{eq2forderpar} with an initial condition $\xi_t^\al$ respectively as

\begin{equation}
\label{eq4forderpar}
f^{\al}_t = U_\al^{t,r} f_r^{\al} + \int_t^r U_\al^{t,s} \phi_s[\mu_s^{\al}]f_s^{\al} \,  ds, \quad t\le r,
\end{equation}
and
\begin{equation}
\label{eq5forderpar}
\xi_r^{\al} = \tilde U_\al^{r,t} \xi_t^{\al} + \int_t^r \tilde U_\al^{r,s} \phi_s^*[\mu_s^{\al}] \xi_s^{\al} \,  ds,
\quad t\le r,
\end{equation}
or in the weak form
\begin{equation}
\label{eq6forderpar}
(f, \xi_r^{\al}) = (U^{t,r}_{\al} f, \xi_t^{\al}) + \int_t^r (\phi_s[\mu_s^{\al}] U^{s,r}_{\al}f, \xi_s^{\al}) \,  ds, \quad f\in \D.
\end{equation}
where $\phi^*_t[\mu]$ is the dual operator of $\phi_t[\mu]$. As we mentioned, we are going to solve these integral equations not insisting on the fact that
their solutions are related in any way to the original equation \eqref{eq2forderpar}.

It is clear (and comes from the standard arguments of perturbation theory) that equations
 \eqref{eq4forderpar}
and \eqref{eq5forderpar} have unique solutions $\digamma^{t,r}_{\al} f_r^{\al}$ and
$\tilde \digamma^{r,t}_{\al} \xi_t^{\al}$, where the propagators $\digamma^{t,r}_{\al}$ and its dual
$\tilde \digamma^{r,t}_{\al}$ are given by the convergent (in $\D$ and $\D^*$ respectively) perturbation series
\begin{equation}
\label{eqperturbseriesforprop}
 \digamma^{t,r}_{\al}=U^{t,r}_{\al}
 +\sum_{m=1}^{\infty} \int_{t\le s_1\le \cdots \le s_m\le r}
 U^{t,s_1}_{\al}\phi_{s_1}U^{s_1,s_2}_{\al}  \cdots  \phi_{s_m}U^{s_m,r}_{\al}\, ds_1  \cdots  ds_m,
 \end{equation}
\begin{equation}
\label{eqperturbseriesforpropdu}
 \tilde \digamma^{r,t}_{\al}=\tilde U^{r,t}_{\al}
 +\sum_{m=1}^{\infty} \int_{t\le s_1\le \cdots \le s_m\le r}
 \tilde U^{r,s_m}_{\al}\phi_{s_m}\tilde U^{s_m,s_{m-1}}_{\al}  \cdots  \phi_{s_1}\tilde U^{s_1,t}_{\al}\, ds_1  \cdots  ds_m,
 \end{equation}
where $\phi_s=\phi_s[\mu_s^{\al}]$.

\begin{theorem}
\label{thnonlsensit}
Under the assumptions of Theorem \ref{propkineticeqnonlMark},
let
\[
 \xi_0^{\al}=\frac{\pa \mu_0^{\al}}{\pa \al}
 \]
 be defined in $\D^*$ for a certain family of initial data $\mu_0^{\al}$ (with the derivative existing in the norm topology of $\D^*$). Moreover, let $A[t,\mu]$ depends on $\mu$ smoothly in the sense that representation \eqref{eqGatbilincont} holds with a continuous in $t$ family
 of bounded linear operator $\phi_t[\mu]: \D\to \D$.
Then $\mu_t^{\al}$ is Lipshitz continuous in $\al$ in the topology $\D^*$, the r.h.s. of \eqref{eqdefderwithpar} exists for almost all $\al$ (the derivative taken in the norm-topology of $\D^*$) and equals almost everywhere to the unique solution $\xi_t^{\al}=\xi_t^{\al}(\xi_0^{\al}, \mu_0^{\al})$ to equation \eqref{eq5forderpar} given by \eqref{eqperturbseriesforpropdu}.
\end{theorem}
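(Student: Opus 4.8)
The plan is to access the derivative \eqref{eqdefderwithpar} through the difference quotient $\zeta_t^h:=(\mu_t^{\al+h}-\mu_t^\al)/h$ and to compare it with the unique mild solution $\xi_t^\al=\tilde\digamma_\al^{t,0}\xi_0^\al$ of \eqref{eq5forderpar}, which is already available as the convergent perturbation series \eqref{eqperturbseriesforpropdu}. First I would dispose of the Lipschitz claim: since $\xi_0^\al=\pa\mu_0^\al/\pa\al$ exists (and is locally bounded) in $\D^*$, the curve $\al\mapsto\mu_0^\al$ is Lipschitz in $\D^*$, and combined with the Lipschitz dependence on initial data \eqref{eqLipcontnonlinearprop} of Theorem \ref{propkineticeqnonlMark} this gives $\|\mu_t^{\al+h}-\mu_t^\al\|_{\D^*}\le c(T)\,|h|\,\sup_\al\|\xi_0^\al\|_{\D^*}$, so that $\al\mapsto\mu_t^\al$ is Lipschitz in $\D^*$ uniformly in $t\in[0,T]$ and $\|\zeta_t^h\|_{\D^*}$ is bounded independently of $h$. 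In particular the series \eqref{eqperturbseriesforpropdu} converges in $\D^*$, because $\tilde U_\al$ is bounded there and each $\phi_s^*[\mu_s^\al]$ is bounded on $\D^*$ as the dual of the bounded operator $\phi_s[\mu_s^\al]:\D\to\D$; using Propositions \ref{prop-propergatorscontdep} and \ref{prop-propergatorProperty}, $\xi_t^\al$ also depends measurably on $\al$.

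The core of the argument is to show that $\rho_t^h:=\mu_t^{\al+h}-\mu_t^\al-h\,\xi_t^\al$ is $o(h)$ in $\D^*$ for almost every $\al$. Subtracting the two solutions of \eqref{eq1propkineticeqnonlMark} and inserting and subtracting $A[s,\mu_s^\al]$, one finds that $\mu_s^{\al+h}-\mu_s^\al$ satisfies, weakly, a linear equation governed by $A[s,\mu_s^{\al+h}]$ with source $\bigl((A[s,\mu_s^{\al+h}]-A[s,\mu_s^\al])f,\mu_s^\al\bigr)$; expanding this source along the segment joining $\mu_s^\al$ to $\mu_s^{\al+h}$ via the Gateaux derivative and the bilinear representation \eqref{eqGatbilincont} splits it into the first-order term $h\,(\phi_s[\mu_s^\al]f,\zeta_s^h)$ plus a second-order Gateaux remainder $r_s^h(f)$. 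Consequently $\rho^h$ obeys a linear mild equation of exactly the type \eqref{eq5forderpar}--\eqref{eq6forderpar} (with propagator $U_{\al+h}$, replaced by $U_\al$ up to a correction controlled through Proposition \ref{prop-propergatorProperty} and \eqref{PropergatorPropertya}), whose initial value $\rho_0^h=\mu_0^{\al+h}-\mu_0^\al-h\,\xi_0^\al$ is $o(h)$ by differentiability of the initial data and whose forcing is $\int_0^t r_s^h\,ds$. A Gronwall estimate in $\D^*$, absorbing the $\phi$-contribution exactly as in the derivation of \eqref{eqperturbseriesforpropdu}, then yields $\|\rho_t^h\|_{\D^*}\le C\bigl(\|\rho_0^h\|_{\D^*}+\int_0^T\|r_s^h\|_{\D^*}\,ds\bigr)$.

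The main obstacle is the control of the remainder $r_s^h$. The hypotheses furnish only the first-order bilinear representation \eqref{eqGatbilincont} with $\phi_t[\mu]$ bounded $\D\to\D$, but no modulus of continuity of the map $\mu\mapsto\phi_t[\mu]$; hence $r_s^h$ cannot be shown to be $o(h)$ for every $\al$, only in an averaged sense. This is exactly where the ``almost everywhere'' is forced: using the uniform Lipschitz bound on $\al\mapsto\mu_s^\al$ together with \eqref{eqGatbilincont}, one shows that $\sup_{s\in[0,T]}\|r_s^h\|_{\D^*}=o(h)$ at every Lebesgue point of the measurable map $\al\mapsto\xi_\cdot^\al$, a full-measure set, and at such points $\zeta_t^h\to\xi_t^\al$ in $\D^*$. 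Since $\xi_t^\al$ solves \eqref{eq5forderpar} by construction, the uniqueness of that mild equation identifies the almost-everywhere limit of the difference quotients with $\xi_t^\al$, which is precisely the assertion \eqref{eqdefderwithpar}. A secondary technical nuisance, handled throughout by splitting every estimate between the $\D$- and $\D^*$-norms and invoking Proposition \ref{prop-propergatorProperty}, is that the derivative is sought in the weaker space $\D^*$ while the a priori boundedness of the propagators is anchored on the invariant domain $\D$.
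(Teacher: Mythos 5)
There is a genuine gap in your argument, and it sits exactly where you placed the ``almost everywhere''. Your remainder $r_s^h$ is, after the mean-value expansion along the segment from $\mu_s^\al$ to $\mu_s^{\al+h}$, controlled by the difference between the Gateaux derivative of $A$ evaluated at the intermediate points $\mu_s^\al+\theta(\mu_s^{\al+h}-\mu_s^\al)$ and at $\mu_s^\al$ itself, i.e.\ by a modulus of continuity of the map $\mu\mapsto\phi_s[\mu]$ in the operator norm of $\D\to\D$. The hypotheses of the theorem supply only existence and boundedness of $\phi_t[\mu]$, not any continuity in $\mu$, and being at a Lebesgue point of the measurable map $\al\mapsto\xi_\cdot^\al$ gives no information whatsoever about how $\phi_s[\mu]$ varies as $\mu$ moves along the solution curve; so the claim that $\sup_s\|r_s^h\|_{\D^*}=o(h)$ on a full-measure set of $\al$ is unsubstantiated, and the Gronwall step that follows cannot close. (A secondary unexamined point: replacing the propagator $U_{\al+h}$ by $U_\al$ via Proposition \ref{prop-propergatorProperty} costs a term of order $\|\mu^{\al+h}_\cdot-\mu^\al_\cdot\|=O(h)$ acting on $\D$-norms, which must also be shown to contribute only $o(h)$ to $\rho_t^h$; this again needs more than boundedness.)

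The paper avoids the difference-quotient route entirely. It approximates $A[t,\mu]$ and $\phi_t[\mu]$ by \emph{bounded} operators $A[t,\mu](n)$, $\phi_t[\mu](n)$ on $\B$, for which the classical sensitivity theory of Banach-space ODEs applies and yields the exact identity $\mu_t^{\al}(n)-\mu_t^{\al_0}(n)=\int_{\al_0}^{\al}\xi_t^{\be}(n)\,d\be$ in $\D^*$; it then passes to the limit $n\to\infty$ (convergence of $\mu^\al_t(n)$ by perturbation theory for kinetic equations, convergence of $\xi^\al_t(n)$ from the perturbation series \eqref{eqperturbseriesforpropdu}) to obtain $\mu_t^{\al}-\mu_t^{\al_0}=\int_{\al_0}^{\al}\xi_t^{\be}\,d\be$. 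From this single integral identity the Lipschitz continuity in $\al$ is immediate (boundedness of $\xi_t^\be$), and the a.e.\ existence of $\pa\mu_t^\al/\pa\al=\xi_t^\al$ follows from Lebesgue differentiation of the integral; the ``almost everywhere'' is forced solely by the possible discontinuity of $\be\mapsto\xi_t^\be$ (cf.\ Remark \ref{remnonsmoothxi}), not by any second-order remainder. If you want to salvage your approach, you would need to add continuity of $\mu\mapsto\phi_t[\mu]$ (in $\mathcal{L}(\D,\D)$, along the relevant curves) as a hypothesis, at which point you would in fact obtain the derivative for \emph{every} $\al$ — a stronger conclusion than the theorem claims, which is itself a signal that you are implicitly assuming more than is given.
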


\begin{proof}
The main idea is to approximate $A[t,\mu]$ by bounded operators, use the standard sensitivity theory for
vector valued ODE and then obtain the required result by passing to the limit.

To carry our this program,
let us choose a family of bounded operators $A[t,\mu](n): \B \to \B$ and a family of bounded operators $\phi_t[\mu](n): \B \to \B$, $n=1,2,...$, that satisfy all the same conditions
as $A[t,\mu]$ and $\phi_t[\mu]$, and such that
$$\|(A[t,\mu](n)-A[t,\mu])g\|_\B \to 0\quad \text{and}\quad \|(\phi_t[\mu](n)-\phi_t[\mu])g\|_\D \to 0$$
for all $g\in \D$. We shall use the same notations for corresponding solutions adding dependence on $n$ for all objects
constructed from $A[t,\mu](n)$.

As such approximations for $A[t,\mu]$, one can use either standard Iosida approximation
(which is convenient in abstract setting) or, in case of the generators of Feller processes, the generators of approximating pure-jump Markov processes.

By the standard theory of ODE in Banach spaces (see e.g. \cite{Mart} or Appendices D,F in \cite{Ko10}), the
equation for $\mu_t^\al(n)$ and $\xi_t^\al(n)$ are both well posed in the strong
sense in both $\B^*$ and $\D^*$, and
 $\xi_t^\al(n)$ represent the derivatives of $\mu_t^{\al}(n)$ in both $\B^*$ and $\D^*$.
  Consequently
 \begin{equation}
\label{eq1thnonlsensit}
 \mu_t^{\al}(n)-\mu_t^{\al_0}(n)= \int_{\al_0}^{\al} \xi_t^{\be}(n)\, d\beta\quad \forall \al,\al_0\in\R
\end{equation}
holds as an equation in $\D^*$ (and in
$\B^*$ whenever $\xi_0^{\al}\in \B^*$).

By the perturbation theory applied to nonlinear kinetic equations (see
\cite{Ko10}, Section 2.1 or \cite{Ko11}), $\mu_t^{\al}(n)$ converge to $\mu^{\al}_t$ in the norm-topology of
$\D^*$. Moreover, from the perturbation series representation \eqref{eqperturbseriesforpropdu}, we
deduce the convergence of $\xi_t^{\al}(n)$ to $\xi^{\al}_t$ in the norm-topology of
$\D^*$. Hence, we
can pass to the limit $n\to \infty$ in equation \eqref{eq1thnonlsensit} in the
norm topology of $\D^*$ yielding the equation
\begin{equation}
\label{eq2thnonlsensit}
 \mu_t^{\al}-\mu_t^{\al_0}= \int_{\al_0}^{\al} \xi_t^{\be} \, d\beta.
\end{equation}
This implies the statement of the Theorem.
\end{proof}

\begin{remark}
\label{remnonsmoothxi}
To establish continuity of $\xi^{\al}$ in $\al$, one seems to require much stronger regularity
 assumptions (see Remark \ref{remnonsmoothxiagain} for these assumptions).
And without this continuity one can get the derivative of the r.h.s. of \eqref{eq2thnonlsensit}
only almost everywhere. Fortunately, for variational derivatives that we are aiming at, we are able to circumvent this difficulty.
\end{remark}

Let us turn to the L\'evy-Khintchine type generators
\begin{equation}
\label{fellergenerator 31}
\begin{split}
&A[t,\mu]f(z)=\frac{1}{2}(G(t,z,\mu)\nabla,\nabla)f(z)+ (b(t,z,\mu),\nabla f(z))\\
 &+\int (f(z+y)-f(z)-(\nabla f (z), y){\bf 1}_{B_1}(y))\nu (t,z,\mu,dy),
\end{split}
\end{equation}
in $\B=C_{\infty}(\R^d)$
such that each of them generates a Feller process with one and the same invariant core $\D= C^2_{\infty}(\R^d)$. In order to do differentiation of the operators $A[t,\mu]$ with respect to $\mu$,  the operators $A[t,\mu]$ should be well defined on a neighbourhood of $\P(\R^d)$. So we introduce the set
\[
\M_{(\lambda_1,\lambda_2)}: = \{\M_\lambda: =\lambda \P(\R^d);  \lambda_1<\lambda<\lambda_2\}
\]
with $0<\lambda_1<1<\lambda_2$.

We also need the following notations. Let $C^{2\times2}_{\infty}(\R^d)$ denote the subspace of functions $F(x)$ from $C^2_\infty(\R^{d})$ such that the elements of 2nd derivative $\partial^2 F/\partial x^2$ are defined and
belong to $C^2_{\infty}(\R^{d})$ as functions of the second variable, equipped with the norm
\[
\|F\|_{C^{2\times2}_{\infty}(\R^{d})}:=\|F\|_{C^2_\infty(\R^{d})}+\sup_{x,y,i,j,l,k}\left\|\frac{\partial ^4 F(x,y)}{\partial x_i\partial x_j\partial y_k\partial y_l}\right\|.
\]

Let $C^{1,2}_{\infty}(\M_{(\lambda_1,\lambda_2)})$ denote the subspace of functionals $F(\mu)$ from
$C(\M_{(\lambda_1,\lambda_2)})$ such that the 1st variational derivative $\de F/\de \mu (x)$ is defined and
belongs to $C^2_{\infty}(\R^d)$ as a function of $x$ uniformly in $\mu$. This subspace becomes a Banach space when equipped with the norm
\[
\|F\|_{C^{1,2}_{\infty}(\M_{(\lambda_1,\lambda_2)})}
:=\sup_{\|\mu\|\leq 1} \left \|\frac{\de F}{\de \mu (.)}\right\|_{C_{\infty}^2(\R^d)}.
\]
Similarly we define the Banach space $C^{2,2}_{\infty}(\M_{(\lambda_1,\lambda_2)})$ of functionals with the norm
\[
\|F\|_{C^{2,2}_{\infty}(\M_{(\lambda_1,\lambda_2)})}
:=\sup_{\|\mu\|\leq 1}\left \|\frac{\de^2 F}{\de \mu (.) \de \mu(.)}\right\|_{C_{\infty}^2(\R^{d})},
\]
and $C^{2,2\times 2}_{\infty}(\M_{(\lambda_1,\lambda_2)})$ of functionals with the norm
\[
\|F\|_{C^{2,2\times 2}_{\infty}(\M_{(\lambda_1,\lambda_2)})}
:=\sup_{\|\mu\|\leq 1}\left \|\frac{\de^2 F}{\de \mu (.) \de \mu(.)}\right\|_{C_{\infty}^{2 \times 2}(\R^{d})}.
\]
It turns out (see Theorem \ref{thnonlsensitad3} below) that the space $C^{2,2\times 2}_{\infty}(\M_{(\lambda_1,\lambda_2)})$ (and not a more natural space $C^{2,2}_{\infty}(\M_{(\lambda_1,\lambda_2)})$) forms an invariant core for propagator \eqref{eqproponkineq}.

The Gateaux derivatives \eqref{eqdeffirstGatderwithinitial} are often specified via variational derivatives
\begin{equation}
\label{eqdeffirstGatvar}
 \frac{\de A[t,\mu]}{\de \mu (x)}=\mathcal{D}_{\de_x}A[t,\mu]
 = \lim_{s \rightarrow 0_+} \frac{1}{s}(A[t,\mu+s\de_x] - A[t,\mu]).
\end{equation}
By formal differentiation, as above, we expect the variational derivatives
\begin{equation}
\label{firstderivative}
\xi_t(x,\mu_0)=\frac{\de \mu_t}{\de \mu_0 (x)}
=\lim_{s \rightarrow 0_+} \frac{1}{s}(\mu_t(\mu_0+s\de_x) - \mu_t(\mu_0)),
\end{equation}
if they exist, to satisfy the mild version of equation
\begin{equation}
\label{eqforfirstdergenkineq}
 \frac{d}{dt}(f,\xi_t(x,\mu_0))
 =\int_{\R^d} \left(A[t,\mu_t]f(v)
 +\int_{\R^d} \frac{\delta A[t,\mu_t]}{\delta \mu_t (v)}f(z) \mu_t(dz)\right)
\xi_t(x,\mu_0,dv),
\end{equation}
where
\begin{equation}
\label{fellergeneratorvardif}
\begin{split}
&\frac{\delta A[t,\mu]}{\delta \mu (v)}f(z)
=\frac{1}{2}\left(\frac{\delta G(t,z,\mu)}{\delta \mu (v)}\nabla,\nabla\right)f(z)
+ \left(\frac{\delta b(t,z,\mu)}{\delta \mu (v)},\nabla f(z)\right)\\
 &+\int_{\R^d} (f(z+y)-f(z)-(\nabla f (z), y){\bf 1}_{B_1}(y))\frac{\delta \nu(t,z,\mu, dy)}{\delta \mu (v)}.
\end{split}
\end{equation}
When there is no ambiguity, we sometimes write $\xi_t(x)=\xi_t(x,\mu_0)$.
Equation (\ref{eqforfirstdergenkineq}) naturally rewrites in form \eqref{eqGatbilincont} with
\begin{equation}
\label{fellergeneratorvardif1}
(\phi_t[\mu]f)(v)=\int_{\R^d} \frac{\delta A[t,\mu]}{\delta \mu (v)}f(z) \mu(dz).
\end{equation}

\begin{theorem}
\label{thnonlsensitad}
Let $\B=C_{\infty}(\R^d)$,
$\D=C^2_{\infty}(\R^d)$, $\M=\M_{\la}$ for each $\la \in (\lambda_1,\lambda_2)$, $A[t,\mu]$ be of the L\'evy-Khintchine form \eqref{fellergenerator 31}. Suppose\\
(i) for any $\{\mu_.\}\in C_{\mu}([0,T],\M_\lambda(\D^*))$, the operator curve $A[t,\{\mu_.\}]:\D\mapsto \B$ generate a strongly continuous backward propagator of bounded linear operators $U^{t,s}[\{\mu_.\}]$ in $\B$, $0 \leq t \leq s$, on the common invariant domain $\D$, such that
	\begin{equation}
		||U^{t,s}[\{\mu.\}]||_{\D\mapsto \D}\leq c_1\,\, \text{and}\,\,||U^{t,s}[\{\mu.\}]||_{\B\mapsto \B}\leq c_2, \quad t\leq s,
	\end{equation}
for some positive constants $c_1,c_2$, and with their dual propagators $\tilde{U}^{s,t}[\{\mu_.\}]$ preserving the set $\M_\lambda$;\\
(ii) the coefficients
\begin{equation}
\label{eq1thnonlsensitad1}
G(t,z,\cdot),\, b(t,z,\cdot),\, \nu(t,z,\cdot) \in C^{1, 2}_{\infty}(\M_{(\lambda_1,\lambda_2)})
\end{equation}
uniformly in $t,z$ as functions of $\mu$, with uniformly bounded derivatives. \\

Then
the variational derivatives
$\xi_t(x)$ of the solutions $\mu_t=\mu_t(\mu_0)$ with respect to $\mu_0$ are well defined as elements of $\D^*$
and are twice continuously differentiable in $x$ (again in the topology of $\D^*$) with uniformly bounded
derivatives. Moreover, they represent unique solutions of the mild forms of equation \eqref{eqforfirstdergenkineq}
with the initial conditions $\xi_0(x)=\de_x$.
\end{theorem}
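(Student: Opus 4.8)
The plan is to reduce the statement to the abstract sensitivity result of Theorem~\ref{thnonlsensit}, whose only nontrivial hypothesis is that the Gateaux derivative of $A[t,\mu]$ admits the representation \eqref{eqGatbilincont} with a family of \emph{bounded} operators $\phi_t[\mu]\colon\D\to\D$. So first I would differentiate the coefficients $G,b,\nu$ of the L\'evy--Khintchine generator \eqref{fellergenerator 31} with respect to $\mu$, obtaining the explicit variational derivative \eqref{fellergeneratorvardif}, and read off $\phi_t[\mu]$ from \eqref{fellergeneratorvardif1}, i.e. $(\phi_t[\mu]f)(v)=\int \frac{\delta A[t,\mu]}{\delta\mu(v)}f(z)\,\mu(dz)$.

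The step where assumption (ii) is genuinely used — and the reason it is phrased with $C^{1,2}_\infty(\M_{(\lambda_1,\lambda_2)})$ rather than a weaker space — is the verification that $\phi_t[\mu]$ maps $\D=C^2_\infty$ into itself. The dependence of $(\phi_t[\mu]f)(v)$ on the variable $v$ enters \emph{only} through $\frac{\delta G(t,z,\mu)}{\delta\mu(v)}$, $\frac{\delta b(t,z,\mu)}{\delta\mu(v)}$ and $\frac{\delta\nu(t,z,\mu,\cdot)}{\delta\mu(v)}$; by (ii) these lie in $C^2_\infty$ as functions of $v$, uniformly in $t,z$ and with uniformly bounded derivatives. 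Integrating the resulting $C^2_\infty$-valued (in $v$) integrand against the finite measure $\mu\in\M_\lambda$ then gives $(\phi_t[\mu]f)(\cdot)\in C^2_\infty$ with $\|\phi_t[\mu]f\|_{C^2_\infty}\le c\|f\|_{C^2_\infty}$, so $\phi_t[\mu]$ is bounded $\D\to\D$ and continuous in $t$ from continuity of the coefficients. This is precisely the hypothesis needed, so Theorem~\ref{thnonlsensit} applies.

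I would then apply Theorem~\ref{thnonlsensit} along the family $\mu_0^s=\mu_0+s\delta_x$, whose initial variation is the fixed element $\delta_x\in\D^*$. This produces $\xi_t(x)$ as the unique solution of the mild equation \eqref{eq5forderpar}, namely $\xi_t(x)=\tilde\digamma^{t,0}\delta_x$, with $\tilde\digamma$ the convergent perturbation-series propagator \eqref{eqperturbseriesforpropdu} built along the fixed trajectory $\mu_\cdot=\mu_\cdot(\mu_0)$; its uniqueness yields the last assertion. The almost-everywhere-in-parameter defect noted in Remark~\ref{remnonsmoothxi} does not occur here precisely because the initial variation $\delta_x$ is independent of $s$: the only $s$-dependence of the solution of \eqref{eq5forderpar} is through $\mu_\cdot^s$, which tends to $\mu_\cdot^0$ in $C([0,T],\D^*)$ by the Lipschitz dependence on initial data of Theorem~\ref{propkineticeqnonlMark}. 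Hence $\xi_t^s$ is continuous in $s$ and the right derivative exists for every $s$, in particular at $s=0$.

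It remains to show $x\mapsto\xi_t(x)$ is twice continuously differentiable in $\D^*$. Since $\tilde\digamma^{t,0}$ is independent of $x$, it is natural to define the $x$-derivatives by duality through the primal perturbation-series propagator $\digamma^{0,t}\colon\D\to\D$ of \eqref{eqperturbseriesforprop}, namely $(f,\partial_x^k\xi_t(x))=\nabla^k(\digamma^{0,t}f)(x)$ for $k=0,1,2$; uniform boundedness $\|\partial_x^k\xi_t(x)\|_{\D^*}\le\|\digamma^{0,t}\|_{\D\to\D}$ is then immediate. The first-order derivative is a genuine $\D^*$-norm limit, since $\tfrac1h(\delta_{x+he}-\delta_x)\to\partial_x\delta_x$ in $\D^*$ uniformly (a first-order Taylor estimate controlled by $\|f\|_{C^2}$), followed by the bounded map $\tilde\digamma^{t,0}$. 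The main obstacle is the \emph{second} $x$-derivative: the difference quotients of $\partial_x\delta_x$ converge to $\partial_x^2\delta_x$ only in a topology strictly weaker than the $\D^*$-norm (the Taylor remainder is controlled by $\|f\|_{C^3}$, not $\|f\|_{C^2}$), so boundedness of $\tilde\digamma^{t,0}$ alone does not let one pass to the limit. Overcoming this is exactly where the finer regularity bookkeeping motivating the spaces $C^{2\times2}_\infty$ (cf. Theorem~\ref{thnonlsensitad3}) enters — one must exploit the regularizing action on $\digamma^{0,t}f$ to upgrade this weaker convergence back to $\D^*$ — and this, rather than the formal differentiation, is where the essential work lies.
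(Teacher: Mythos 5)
Your proposal follows the paper's route almost exactly: verify that assumption (ii) makes $\phi_t[\mu]$ of \eqref{fellergeneratorvardif1} a bounded operator $\D\to\D$ (this is precisely why the hypothesis is phrased in $C^{1,2}_{\infty}(\M_{(\lambda_1,\lambda_2)})$), obtain $\xi_t(x)=\tilde\digamma^{t,0}\delta_x$ from the perturbation series \eqref{eqperturbseriesforpropdu}, and identify it with the variational derivative by exploiting the continuity of $s\mapsto\xi_t(x,\mu_0+s\delta_x)$ — the paper does this by passing to the limit in the identity $\mu_t(\mu_0+\delta_x)-\mu_t(\mu_0)=\int_0^1\xi_t(x,\mu_0+s\delta_x)\,ds$ after approximating $A[t,\mu]$ by bounded operators, which is the same observation you make about why the almost-everywhere defect of Remark \ref{remnonsmoothxi} disappears here. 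Up to that point your argument is correct and essentially identical to the paper's.

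The last paragraph, however, contains a declared gap where none is needed, and points at the wrong tools. The paper closes the twice-differentiability claim with a linearity argument: since $\partial\delta_x/\partial x\in(C^1_{\infty})^*$ and $\partial^2\delta_x/\partial x^2\in(C^2_{\infty})^*=\D^*$, and the mild equation for $\xi$ is linear with a solution operator bounded on $\D^*$, the $x$-derivatives of $\xi_t(x)$ are simply the solutions with these initial data, well defined and uniformly bounded in $\D^*$. The differentiability asserted — and the only one used downstream, e.g.\ in the chain-rule formulas of Theorem \ref{thnonlsensitad3}, where $\xi_t(x,\mu_0,dz)$ is always paired with a fixed test function $\delta F/\delta\mu_t(\cdot)\in C^2_{\infty}$ — is exactly the one you have already established via $(f,\partial_x^k\xi_t(x))=\nabla^k(\digamma^{0,t}f)(x)$, $k=0,1,2$, together with $\|\digamma^{0,t}\|_{\D\to\D}<\infty$. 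The norm-convergence of second difference quotients that you flag as ``the essential work'' is not required for the theorem or its applications, and the machinery you gesture at is inapposite: the space $C^{2\times2}_{\infty}$ enters only for the \emph{second variational} derivative $\eta_t(x,y)$ of Theorem \ref{thnonlsensitad2} (to make sense of the term $\int\frac{\delta^2A}{\delta\mu(v)\delta\mu(u)}g\,d\mu$ against $\xi_s\otimes\xi_s$), not for the second $x$-derivative of $\xi_t(x)$, and no smoothing property of the propagator is assumed in this theorem, so there is no ``regularizing action'' to exploit. You should replace that paragraph by the one-line linearity argument above.
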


\begin{proof}
By the arguments given before Theorem \ref{thnonlsensit}
the solutions $\xi_t(x,\mu_0)$ to the mild form of equation
\eqref{eqforfirstdergenkineq} are uniquely defined in $\D^*$.
Moreover, as $\xi_t(x)$ solve linear equations, their derivatives with respect to initial data $\xi_0(x)(=\de_x)$ solve the same linear equations. Therefore, since
\[
\frac{\pa \de_x}{\pa x}\in (C^1_{\infty}(\R^d))^*\quad\text{and}\quad \quad \frac{\pa^2 \de_x}{\pa x^2}\in \D^*= (C^2_{\infty}(\R^d))^*,
\]
the derivatives
\[
\frac{\pa \xi_t (x)}{\pa x}, \quad \frac{\pa^2 \xi_t (x)}{\pa x^2}
\]
are well defined in $\D^*$ and solve the same equation as $\xi_t(x,\mu_0)$ do.

It remains to establish that $\xi_t(x)$ do represent variational derivatives.
To this end, let us follow the same argument as in Theorem \ref{thnonlsensit}. Approximating $A[t,\mu]$ by bounded operators $A[t,\mu](n)$ and passing to the limit in the integral equation defining variational derivatives
 \[
 F(Y+\delta_x)-F(Y) =\int_0^1 \frac{\de F(Y+s \delta_x)}{\de Y(x)}\, ds
 \]
(see e.g. (F.1) in \cite{Ko10}) leads to the equation
\[
\mu_t (\mu_0+\de_x)-\mu_t (\mu_0)=\int_0^1 \xi_t(x,\mu_0+s\de_x) \, ds.
\]
Unlike equation \eqref{eq2thnonlsensit} above, the functions on the r.h.s. are continuous, which allows us to
deduce the required property of $\xi_t(x)$.
\end{proof}

\begin{remark}
\label{remnonsmoothxiagain}
As $\xi_t(x)$ play a key role in our exposition, some further details seem to be in order.
Firstly, if the propagator generated by $A[t,\mu]$ has smoothing property \eqref{smooth property}
and moreover, $A[t,\mu]$ depends on $\mu$ only via the drift coefficients (that is $L_i$ from
\eqref{fellergeneratorwithdriftcont} does not depend on $\mu$, as is the case in the pioneering papers \cite{HCM3} and \cite{LL2007}), then the propagator resolving equation \eqref{eq4forderpar} (or more precisely, the mild form of the equation dual to \eqref{eqforfirstdergenkineq}) acts by bounded operators in $\B=C_{\infty}(\R^d)$. Hence in this case the variational derivatives $\xi_t(x)$ belong to the same space $\B^*=\mathbf{M}^{sign}(\R^d)$ as the evolution $\mu_t$ itself making all continuity arguments much more transparent. Moreover, in the most of examples one can even show that
the measures $\xi_t(x)$ have density with respect to Lebesgue measure, i.e. they are represented by functions from $L^1(\R^d)$. This situation was described in detail in \cite{Ko07}. Secondly, only assuming that
the propagator generated by $A[t,\mu]$ has smoothing property \eqref{smooth property} but also extended to smooth functions, i.e. it also satisfies
\begin{equation}
\label{smooth propertymore}
\|{U}^{t,s}\phi\|_{C_{\infty}^2(\mathbf{R}^d)} \leq w(s-t)\|\phi\|_{C^1_{\infty}(\mathbf{R}^d)},
\end{equation}
then, as one easily sees, the propagator resolving the mild form of the equation dual to \eqref{eqforfirstdergenkineq} acts by bounded operators in $C^1_{\infty}(\R^d)$. Hence $\xi_t(x)$ belong to the space $(C^1_{\infty}(\R^d))^*$ and continuity in this space of the integrand on r.h.s. of \eqref{eq2thnonlsensit} is easily established.
\end{remark}

Let us analyze similarly the second variational derivative
\begin{equation}
\begin{split}
\eta_t(x,y)=&\eta_t(x,y,\mu_0)=
\frac{\de^2 \mu_t}{\de \mu_0(x) \de \mu_0(y)}\\
=&\lim_{s \rightarrow 0_+} \frac1s \left(\frac{\de \mu_t}{\de \mu_0 (x)}(\mu_0+s\de_y)
-\frac{\de \mu_t}{\de \mu_0 (x)}(\mu_0)\right).
\end{split}
\end{equation}
Differentiating (again formally in the first place) equation \eqref{eqforfirstdergenkineq}, yields the equation

 \begin{equation}\label{eqforsecdergenkineq}
 \begin{split}
\frac{d}{dt}(f,\eta_t(x,y))& =\int_{\R^d} \left(A[t,\mu]f(v)
 +\int_{\R^d} \frac{\delta A[t,\mu]}{\delta \mu_t (v)}f(z) \mu_t(dz)\right)
\eta_t(x,y,dv)\\
 &+ \int_{\R^{2d}} \Big( \frac{\delta A[t,\mu]}{\delta \mu_t (u)}f(v)
  + \frac{\delta A[t,\mu]}{\delta \mu_t (v)}f(u)\\
  & \hspace{3em}+ \int_{\R^d} \frac{\delta ^2A[t,\mu]}{\delta \mu_t (v) \delta \mu_t(u)}f(z)
  \mu_t(dz)\Big) \xi_t(x,dv) \xi_t(y,du)
  \end{split}
  \end{equation}
with $\eta_0(x,y)=0$. Following the similar argument to the first order derivative, we shall look at the solution to (\ref{eqforsecdergenkineq}) only in the mild version:
\begin{equation}
\label{eqforsecdergenkineqmild}
\eta_t(x,y) = \tilde U^{t,0} \eta_0(x,y) + \int_0^t \tilde U^{t,s} (\phi_s^*[\mu_s] \eta_s(x,y) +G_s(x,y))\,  ds,
\end{equation}
where $\phi_s^*[\mu]$ is the dual operator of $\phi_s[\mu]$ given in \eqref{fellergeneratorvardif1} and $G_s\in \D^*$ given by
\begin{equation}
\begin{split}
\label{G}
(g,G_s(x,y))=
\int_{\R^{2d}}\Big( &\frac{\delta A[s,\mu]}{\delta \mu_s (u)}g(v)
  +\frac{\delta A[s,\mu]}{\delta \mu_s (v)}g(u)\\
  +& \int _{\R^d}\frac{\delta ^2A[s,\mu]}{\delta \mu_s (v) \delta \mu_s(u)}g(z)
  \mu_s(dz)\Big) \xi_s(x,dv) \xi_s(y,du).
\end{split}
\end{equation}

For equation (\ref{eqforsecdergenkineqmild}) to make sense, we need only to make sure that the last term in (\ref{G}) is well defined for
$\xi_s \in \D^*=(C^2_{\infty}(\R^d))^*$. This leads to the following extension of Theorem \ref{thnonlsensitad} to the second variational derivative.

\begin{theorem}
\label{thnonlsensitad2}
Under the assumptions of Theorem \ref{thnonlsensitad},
suppose additional regularity on the coefficients, namely
\begin{equation}
\label{eq1thnonlsensitad}
G(t,z,\cdot),\, b(t,z,\cdot),\,  \nu(t,z,\cdot) \in C^{2,2\times 2}_{\infty}(\M_{(\lambda_1,\lambda_2)})
\end{equation}
uniformly in $t,x$ as functions of $\mu$, with uniformly bounded derivatives. Then the second variational derivatives
$\eta_t(x,y)$ of the solutions $\mu_t=\mu_t(\mu_0)$ with respect to $\mu_0$ are well defined as elements of $\D^*$
and are twice continuously differentiable as functions of $x$ and $y$ (again in the topology of $\D^*$) with uniformly bounded derivatives. Moreover, $\eta_t(x,y)$ represent unique solutions of the mild forms of equation \eqref{eqforsecdergenkineq} with $\eta_0(x,y)=0$.
\end{theorem}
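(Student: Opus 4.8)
The plan is to mirror, one level up, the three-stage argument already used for the first variational derivative in Theorem \ref{thnonlsensitad}: first solve the mild equation \eqref{eqforsecdergenkineqmild} for $\eta_t(x,y)$ as an abstract object living in $\D^*=(C^2_\infty(\R^d))^*$; then establish that it is twice continuously differentiable in $x$ and $y$ in the $\D^*$-topology with uniformly bounded derivatives; and finally identify it with the genuine second variational derivative through a bounded-operator approximation and a passage to the limit. Throughout, the homogeneous part of \eqref{eqforsecdergenkineq} is the same operator that governs the first-order equation, so it is resolved by the very propagator $\tilde\digamma^{t,s}$ built from the perturbation series \eqref{eqperturbseriesforpropdu}; only the inhomogeneity $G_s(x,y)$ is new.

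The first and decisive step is to verify that the source $G_s(x,y)$ from \eqref{G} defines a bounded element of $\D^*$, uniformly in $s,x,y$. For a fixed test function $g\in\D=C^2_\infty(\R^d)$, the pairing $(g,G_s(x,y))$ is a double integral against $\xi_s(x,dv)\,\xi_s(y,du)$; since by Theorem \ref{thnonlsensitad} both $\xi_s(x,\cdot)$ and $\xi_s(y,\cdot)$ lie in $(C^2_\infty)^*$, one must check that the integrand, as a function of $(v,u)$, belongs jointly to $C^2_\infty$ in $v$ and in $u$. The genuinely new contribution is the last term $\int \frac{\de^2 A[s,\mu]}{\de\mu_s(v)\de\mu_s(u)}g(z)\,\mu_s(dz)$: here $v$ and $u$ enter only as variational directions, so the $(v,u)$-dependence is carried entirely by the second variational derivatives $\frac{\de^2 G}{\de\mu(v)\de\mu(u)}$, $\frac{\de^2 b}{\de\mu(v)\de\mu(u)}$, $\frac{\de^2 \nu}{\de\mu(v)\de\mu(u)}$ of the coefficients. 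Demanding that these be simultaneously $C^2$ in $v$ and $C^2$ in $u$ — i.e. that the mixed fourth derivatives $\pa^4/\pa v_i\pa v_j\pa u_k\pa u_l$ be bounded — is exactly the content of the assumption $G,b,\nu\in C^{2,2\times 2}_\infty(\M_{(\lambda_1,\lambda_2)})$ in \eqref{eq1thnonlsensitad}, and is the reason the weaker space $C^{2,2}_\infty$ does not suffice. The two remaining terms involve only a single first variational derivative and are controlled exactly as in the first-order analysis of Theorem \ref{thnonlsensitad}. I expect this verification, term by term over the diffusion, drift and jump parts of the L\'evy--Khintchine structure, to be the main obstacle, though it is essentially bookkeeping once the right norms are isolated.

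With $G_s(x,y)\in\D^*$ bounded uniformly in hand, \eqref{eqforsecdergenkineqmild} becomes a linear inhomogeneous mild equation whose homogeneous part is resolved by $\tilde\digamma^{t,s}$, so its unique solution is given by the Duhamel formula
\begin{equation*}
\eta_t(x,y)=\int_0^t \tilde\digamma^{t,s}\,G_s(x,y)\,ds,
\end{equation*}
which converges in $\D^*$ because $\tilde\digamma^{t,s}$ is bounded on $\D^*$ and the integrand is bounded there. For smoothness in $x$ and $y$, I would observe that the only dependence on $(x,y)$ enters through $G_s(x,y)$, and this via the measures $\xi_s(x,dv)$ and $\xi_s(y,du)$; since Theorem \ref{thnonlsensitad} gives that $\xi_s(x)$ is twice continuously differentiable in $x$ with $\pa^2\xi_s/\pa x^2\in\D^*$, the same holds for $G_s(x,y)$ in each of $x$ and $y$, so $\pa^2\eta_t/\pa x^2$ and $\pa^2\eta_t/\pa y^2$ solve the same Duhamel equation with correspondingly differentiated sources and are therefore well defined and uniformly bounded in $\D^*$.

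Finally, to identify $\eta_t(x,y)$ with the actual second variational derivative I would repeat the approximation scheme of Theorems \ref{thnonlsensit} and \ref{thnonlsensitad}: replace $A[t,\mu]$ by bounded operators $A[t,\mu](n)$, for which the classical sensitivity theory for Banach-space ODE makes $\eta_t(x,y)(n)$ a genuine second derivative and yields the identity
\begin{equation*}
\frac{\de\mu_t}{\de\mu_0(x)}(\mu_0+\de_y)-\frac{\de\mu_t}{\de\mu_0(x)}(\mu_0)=\int_0^1 \eta_t(x,y,\mu_0+s\de_y)\,ds,
\end{equation*}
and then pass to the limit $n\to\infty$ in the $\D^*$-topology using convergence of the series \eqref{eqperturbseriesforpropdu}. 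As in Theorem \ref{thnonlsensitad}, the advantage over the merely almost-everywhere statement \eqref{eq2thnonlsensit} is that the integrand $s\mapsto\eta_t(x,y,\mu_0+s\de_y)$ is continuous in $\D^*$ — continuity of $\mu_s$, of $\xi_s$, and hence of $G_s$ in the base measure propagates through the Duhamel formula — so the fundamental theorem of calculus applies and identifies $\eta_t(x,y)$ with $\frac{\de^2\mu_t}{\de\mu_0(x)\de\mu_0(y)}$, completing the proof.
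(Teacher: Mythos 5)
Your proposal is correct and follows essentially the same route as the paper: the paper's own proof is a brief remark that the result is a "straightforward extension" of Theorem \ref{thnonlsensitad}, whose one substantive point — that the $(x,y)$-dependence of the source enters through the product $\xi_t(x,dv)\,\xi_t(y,du)$, so that one can differentiate twice in $x$ and twice in $y$ separately, which is precisely why the space $C^{2,2\times 2}_{\infty}$ rather than $C^{2,2}_{\infty}$ is needed — is exactly the point you identify and elaborate. Your filled-in details (uniform boundedness of $G_s(x,y)$ in $\D^*$, the Duhamel representation via the perturbation-series propagator, and the bounded-operator approximation with continuity of the integrand to identify $\eta_t$ as the genuine second variational derivative) are the natural expansion of the paper's sketch.
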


\begin{proof}
It is almost straightforward extension of Theorem \ref{thnonlsensitad}. However, differentiating equation \eqref{eqforsecdergenkineq} with respect to $x$ and $y$, one has to take into account that dependence on $x,y$ is not only via initial conditions, but also via the r.h.s. containing  $\xi_t(x,dv) \xi_t(y,du)$. This product form allows to differentiate
twice in $x$ and additionally twice in $y$, as need to get a function of class $C^{2,2\times 2}_{\infty}(\M_{(\lambda_1,\lambda_2)})$.
\end{proof}

\begin{remark}
If $G,b,\nu$ depend on $\mu$ as smooth functions of some multiple integrals (which is usually the case in application)
of the form
\[
\int g(x_1,\cdots, x_n) \mu(dx_1)\cdots \mu(dx_n)
\]
 with a smooth function $g$, then all variational derivatives
are defined and are smooth. Hence smoothness of functions \eqref{eq1thnonlsensitad1} and \eqref{eq1thnonlsensitad} are very natural and not much restrictive assumptions.
\end{remark}

\begin{remark}
In case $A[t,\mu]=A[t]$ not depending on $\mu$, i.e. in case of a usual Markov process,
$\xi_t(x,\mu_0)$ does not depend on $\mu_0$ and coincides with the transition probability
$P(t,x,dy)$ of this process. Moreover, $\eta_t(x,y)$ vanishes.
\end{remark}

Finally we are able to analyze
the transformations generated by the flow $\mu_0=\mu \mapsto \mu_t$ on functionals, that is the
transformations $(\Phi^t(F))(\mu)=F (\mu_t(\mu))$, or the corresponding backward propagators \eqref{eqproponkineq}.

These transformations are well defined as linear contractions on the space $C(\M_{(\lambda_1,\lambda_2)})$
of continuous in $\D^*$ functions on $\M_{(\lambda_1,\lambda_2)}$, since
\begin{equation}
\begin{split}
\| \Phi^{s,t}(F) \|_{C(\M_{(\lambda_1,\lambda_2)})}=&\sup_{\mu\in \M_{(\lambda_1,\lambda_2)}}|\Phi^{s,t}(F)(\mu) |\\
=\sup_{\mu\in \M_{(\lambda_1,\lambda_2)}}|F (\mu_t(\mu_s)) | &\leq \sup_{\mu\in \M_{(\lambda_1,\lambda_2)}}|F (\mu) |\\
=&\|F\|_{C(\M_{(\lambda_1,\lambda_2)})}.
\end{split}
\end{equation}

\begin{theorem}
\label{thnonlsensitad3}
(i) Under the assumptions of Theorem \ref{thnonlsensitad}, the transformations
\eqref{eqproponkineq} form a backward propagator of uniformly bounded (for $s,t$ from any compact interval)
linear operators in $C^{1,2}_{\infty}(\M_{(\lambda_1,\lambda_2)})$.
(ii) Under the assumptions of Theorem \ref{thnonlsensitad2}, the transformations
\eqref{eqproponkineq} form a backward propagator of uniformly bounded (for $s,t$ from any compact interval)
linear operators in $C^{2,2\times 2}_{\infty}(\M_{(\lambda_1,\lambda_2)})$.
\end{theorem}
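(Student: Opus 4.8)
The plan is to reduce both statements to the chain rule for variational derivatives, feeding in the regularity of the first and second variations $\xi_t(x)$ and $\eta_t(x,y)$ already established in Theorems \ref{thnonlsensitad} and \ref{thnonlsensitad2}. First I would record the propagator identity itself. Writing $\mu_t(\mu_s)$ for the solution flow, the uniqueness in Theorem \ref{propkineticeqnonlMark} gives $\mu_r(\mu_s)=\mu_r(\mu_t(\mu_s))$ for $s\le t\le r$, whence $\Phi^{s,t}\Phi^{t,r}=\Phi^{s,r}$ and $\Phi^{t,t}=\mathrm{Id}$. Combined with the contraction estimate on $C(\M_{(\lambda_1,\lambda_2)})$ proved immediately before the statement, this already exhibits $\{\Phi^{s,t}\}$ as a backward propagator; the only remaining content is invariance of the two finer spaces together with uniform operator bounds.

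For part (i) I would differentiate $\Phi^{s,t}(F)(\mu)=F(\mu_t(\mu))$ via the chain rule, obtaining
\[
\frac{\de \Phi^{s,t}(F)}{\de \mu(x)}=\int_{\R^d}\frac{\de F}{\de \mu_t(v)}\,\xi_t(x,dv).
\]
Since $F\in C^{1,2}_\infty$ forces $\de F/\de\mu_t(\cdot)\in C^2_\infty(\R^d)=\D$, and since Theorem \ref{thnonlsensitad} provides $\partial_x^k\xi_t(x)\in\D^*$ for $k=0,1,2$ with uniformly bounded $\D^*$-norms and continuous dependence on $x$, I may differentiate under the pairing to get $\partial_x^k\big[\de\Phi^{s,t}(F)/\de\mu(x)\big]=(\,\de F/\de\mu_t,\ \partial_x^k\xi_t(x)\,)$. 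The duality bound $|(g,\nu)|\le\|g\|_\D\|\nu\|_{\D^*}$ then gives
\[
\Big\|\frac{\de \Phi^{s,t}(F)}{\de \mu(\cdot)}\Big\|_{C^2_\infty}\le C(T)\,\Big\|\frac{\de F}{\de\mu_t}\Big\|_{C^2_\infty}\le C(T)\,\|F\|_{C^{1,2}_\infty},
\]
uniformly in $\mu$, which is precisely membership in $C^{1,2}_\infty$ with a uniform operator bound.

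For part (ii) I would apply the second-order chain rule, which produces the two characteristic terms
\[
\frac{\de^2 \Phi^{s,t}(F)}{\de\mu(x)\de\mu(y)}=\int\!\!\int\frac{\de^2 F}{\de\mu_t(v)\de\mu_t(w)}\,\xi_t(x,dv)\,\xi_t(y,dw)+\int\frac{\de F}{\de\mu_t(v)}\,\eta_t(x,y,dv).
\]
Differentiating twice in $x$ and twice in $y$ hits $\xi_t(x),\xi_t(y)$ in the first term and $\eta_t(x,y)$ in the second. The first term is exactly where the asymmetric space enters: to pair $\de^2F/\de\mu_t(v)\de\mu_t(w)$ against $\partial_x^2\xi_t(x,dv)\otimes\partial_y^2\xi_t(y,dw)$ one needs this kernel to lie in $C^2_\infty$ in $v$ and, after that pairing, to remain $C^2_\infty$ in $w$ — which is exactly the $C^{2\times2}_\infty$ regularity encoded in $F\in C^{2,2\times2}_\infty$. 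The second term needs only $\de F/\de\mu_t\in C^2_\infty$ together with the fact, supplied by Theorem \ref{thnonlsensitad2}, that the mixed derivatives $\partial_x^2\partial_y^2\eta_t(x,y)$ exist in $\D^*$ with uniform bounds. Collecting the duality estimates as in part (i) yields $\|\Phi^{s,t}(F)\|_{C^{2,2\times2}_\infty}\le C(T)\|F\|_{C^{2,2\times2}_\infty}$.

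The main obstacle is conceptual bookkeeping rather than any single estimate: it is precisely the double product $\xi_t(x)\otimes\xi_t(y)$ in the second variation that forces two \emph{independent} pairs of position-derivatives on the second measure-derivative of $F$, so the propagator cannot be closed on the more natural $C^{2,2}_\infty$ and must instead live on $C^{2,2\times2}_\infty$. A secondary technical point is the rigorous justification of differentiation under the pairing and of the $C_\infty$-decay of the resulting functions in $x$ (and in $(x,y)$); as in the proofs of Theorems \ref{thnonlsensitad}–\ref{thnonlsensitad2}, I would handle both by the bounded-generator approximation $A[t,\mu](n)$ and passage to the limit in $\D^*$, where continuity of the integrands legitimizes the limit.
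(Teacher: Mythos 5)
Your proposal is correct and takes essentially the same route as the paper: the paper's proof consists precisely of the two chain-rule formulas expressing $\de \Phi^{s,t}(F)/\de\mu(x)$ as the pairing of $\de F/\de\mu_t$ with $\xi_t(x)$ and $\de^2 \Phi^{s,t}(F)/\de\mu(x)\de\mu(y)$ as the sum of the $\eta_t(x,y)$-term and the $\xi_t(x)\otimes\xi_t(y)$-term, combined with the regularity from Theorems \ref{thnonlsensitad} and \ref{thnonlsensitad2}. Your additional remarks on the propagator identity via uniqueness of the flow and on why the tensor term forces the space $C^{2,2\times 2}_{\infty}$ rather than $C^{2,2}_{\infty}$ are consistent with (and make explicit) what the paper leaves implicit.
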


\begin{proof}
It follows from Theorems \ref{thnonlsensitad} and \ref{thnonlsensitad2} and the formulas
\[
\frac{(\de \Phi^{0,t}(F))(\mu) }{\de \mu_0(x)}=\frac{\de F (\mu_t)}{\de \mu_0(x)}=\int_{\R^d} \frac{\de F (\mu_t)}{\de \mu_t(z)}\xi_t(x,\mu_0,dz),
\]
\begin{equation}
\begin{split}
\frac{(\de^2 \Phi^{0,t}(F))(\mu) }{\de \mu_0(x)\de \mu_0(y)}=&\frac{\de^2 F (\mu_t)}{\de \mu_0(x)\de \mu_0 (y)}\\
=&\int _{\R^d}\frac{\de F (\mu_t)}{\de \mu_t(z)}\eta_t(x,y,\mu_0,dz)\\
&+\int_{\R^{2d}} \frac{\de^2 F (\mu_t)}{\de \mu_t(z) \de \mu_t(w)}\xi_t(x,\mu_0,dz)\xi_t(y,\mu_0,dw).
\end{split}
\end{equation}
\end{proof}

\section{Convergence of $N$-particle approximations (dynamic law of large numbers)}
\label{Convergence}

Here we start with the refining of some results on the weak LLN limit for interacting particles from Chapter 9 of \cite{Ko10} based on the improvements of the regularity results achieved in the previous section. Then we extend them in two directions: we prove that all the rates of convergence remain valid (i) if one of the agent would be allowed to deviate from the common behavior and (ii) if one uses integral functionals on measure paths (interpreted as integral payoffs in the game setting),
rather than  just on marginal measures.

To this end, let us return to operator $\widehat A_t^N$ of form \eqref{eq0Nparticlegen} from
the introduction (where we omit, for the moment, the dependence on $u$ and $\gamma$):
 \begin{equation}
 \label{eq1Nparticlegen}
\widehat A_t^N F(\de_{\x}/N)=\sum_{i=1}^N A^i[t,\mu]f (x_1,\cdots , x_N),
\end{equation}
where $f(\x)=F(\de_{\x}/N)$, $\x=(x_1,\cdots, x_N)$ and
\[
\mu= \frac1N \sum_{i=1}^N \de_{x_i}=\frac1N \de_{\x}.
\]
Assume now that $A[t,\mu]$ is of the L\'evy-Khintchine form \eqref{fellergenerator 31} in $\B=C_{\infty}(\R^d)$
and such that each $\widehat A_t^N$ generates a Feller process with invariant core $\D= C^2_{\infty}(\R^{dN})$.
The following calculations are performed in Chapter 9 of \cite{Ko10}.

\begin{prop}
\label{propexplicitappropgen}
 If $F$ is smooth enough, which means that all the
derivatives in the formulas below are well defined, then
\[
\widehat A_t^N F(\mu)= \int_{\R^d}\left[\left(A[t,\mu] \frac{\delta F}{\delta \mu
(.)}\right)(x)+\frac{1}{2N} \left(G(t,x, \mu)\frac{\pa}{\pa x},
\frac{\pa}{\pa y}\right)\frac{\delta^2 F}{\delta \mu(x)\delta
\mu(y)}\Big|_{y=x}\right]\mu(dx)
\]
\begin{equation}
\label{eqsecmanipulgen2}
 +\frac1N\int_0^1(1-s) \, ds \,
\int_{\R^{2d}}\left(\frac{\delta^2 F}{\delta \mu(.)\delta \mu(.)}
(\mu+\frac{s}{N}(\delta_{x+y}-\delta_x)), (\delta_{x+y}-\delta_x)^{\otimes
2}\right)\nu (t,x, \mu,dy)\mu(dx),
\end{equation}
where $\mu=\de_{\x}/N$.
 \end{prop}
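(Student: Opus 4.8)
The plan is to compute the action of each summand $A^i[t,\mu]$ on $f(\x)=F(\de_{\x}/N)$ by expressing the partial derivatives of $f$ in the variable $x_i$ through the variational derivatives of $F$, and then to reassemble the sum over $i$ into integrals against the empirical measure $\mu=\de_{\x}/N$ using the elementary averaging identity $\frac1N\sum_{i=1}^N \psi(x_i)=\int_{\R^d}\psi(x)\mu(dx)$.

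First I would record the chain rule for functionals of the empirical measure. Moving a single argument $x_i$ by $h$ perturbs $\mu$ by $\frac1N(\de_{x_i+h}-\de_{x_i})$, so a first-order expansion in the measure argument (the quadratic contribution being a second difference of order $O(h^2)$) gives
\[
\frac{\pa f}{\pa x_i^k}(\x)=\frac1N\,\frac{\pa}{\pa x^k}\frac{\delta F}{\delta \mu(x)}\Big|_{x=x_i}.
\]
Differentiating once more in $x_i^l$, and noting that $x_i$ now enters both as the spatial argument of $\delta F/\delta\mu$ and implicitly through $\mu=\de_{\x}/N$ (the latter contributing an extra factor $1/N$), I obtain
\[
\frac{\pa^2 f}{\pa x_i^k\pa x_i^l}(\x)=\frac1N\,\frac{\pa^2}{\pa x^k\pa x^l}\frac{\delta F}{\delta \mu(x)}\Big|_{x=x_i}+\frac1{N^2}\,\frac{\pa^2}{\pa x^k\pa y^l}\frac{\delta^2 F}{\delta \mu(x)\delta\mu(y)}\Big|_{x=y=x_i}.
\]
Inserting these into the drift and diffusion parts of $A^i[t,\mu]$ and summing over $i$, the drift term together with the leading diffusion term collapse, via the averaging identity, into the local (drift plus diffusion) part of $\int_{\R^d}(A[t,\mu]\frac{\delta F}{\delta\mu(\cdot)})(x)\,\mu(dx)$, while the second, $1/N^2$ contribution of the diffusion term produces after summation precisely the correction $\frac1{2N}(G(t,x,\mu)\frac{\pa}{\pa x},\frac{\pa}{\pa y})\frac{\delta^2 F}{\delta\mu(x)\delta\mu(y)}|_{y=x}$ integrated against $\mu$.

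For the nonlocal part I would use the exact second-order Taylor formula with integral remainder for the functional $F$ along the segment $\mu+s\rho$, $s\in[0,1]$, with increment $\rho=\frac1N(\de_{x_i+y}-\de_{x_i})$:
\[
F(\mu+\rho)-F(\mu)=\Big(\frac{\delta F}{\delta\mu(\cdot)},\rho\Big)+\int_0^1(1-s)\Big(\frac{\delta^2 F}{\delta\mu(\cdot)\delta\mu(\cdot)}(\mu+s\rho),\rho^{\otimes2}\Big)\,ds.
\]
The linear term equals $\frac1N(\frac{\delta F}{\delta\mu(x_i+y)}-\frac{\delta F}{\delta\mu(x_i)})$ and combines with the compensator $-(\nabla_{x_i}f,y)\mathbf{1}_{B_1}(y)=-\frac1N(\nabla\frac{\delta F}{\delta\mu(x)}|_{x=x_i},y)\mathbf{1}_{B_1}(y)$, so that after integration against $\nu(t,x_i,\mu,dy)$ and summation over $i$ it reproduces exactly the nonlocal part of $\int_{\R^d}(A[t,\mu]\frac{\delta F}{\delta\mu(\cdot)})(x)\,\mu(dx)$. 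Since $\rho^{\otimes2}=\frac1{N^2}(\de_{x_i+y}-\de_{x_i})^{\otimes2}$, the quadratic remainder, summed over $i$ with one factor $1/N$ converting the sum into an integral against $\mu$, yields the final term of \eqref{eqsecmanipulgen2} verbatim.

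The only genuinely delicate point is the bookkeeping of the powers of $1/N$: one must carefully separate the explicit spatial dependence of $\delta F/\delta\mu(x)$ from its implicit dependence through $\mu=\de_{\x}/N$, the latter being responsible for the extra $1/N$ in the second-derivative identity and hence for the $\frac1{2N}$-correction term. The functional Taylor expansion with integral remainder is legitimate under the standing smoothness hypothesis on $F$ (existence and continuity of the first and second variational derivatives), which is precisely what ``$F$ smooth enough'' is taken to mean, so no analytic input beyond this is needed.
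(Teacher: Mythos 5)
Your computation is correct and is essentially the same calculation the paper invokes (it defers the proof to Chapter~9 of \cite{Ko10}, where exactly this chain-rule/Taylor-expansion argument is carried out): the identities for $\pa f/\pa x_i$ and $\pa^2 f/\pa x_i^2$ in terms of first and second variational derivatives, the averaging identity $\frac1N\sum_i\psi(x_i)=\int\psi\,d\mu$, and the exact second-order Taylor formula with integral remainder for the jump part are precisely the ingredients needed, and your bookkeeping of the powers of $1/N$ (including the extra $1/N$ from the implicit dependence of $\delta F/\delta\mu(x)$ on $\mu=\de_{\x}/N$, which produces the $\frac1{2N}$ correction) is right. One small point worth noting explicitly is that the coefficients $G,b,\nu$ of $A^i[t,\mu]$ are frozen at $\mu=\de_{\x}/N$ and are not differentiated in $x_i$, and that the segment $\mu+\frac sN(\de_{x+y}-\de_x)$, $s\in[0,1]$, stays inside $\PC(\XC)$ since $\mu\ge\frac1N\de_{x_i}$, so the functional Taylor formula is applied on an admissible set.
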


 This formula yields a precise estimate for the deviation of operator \eqref{eq1Nparticlegen} from the
 limiting operator
\begin{equation}
\begin{split}
\label{Lambda}
 \Lambda_t F(\mu)
 &=\int_{\R^d} A[t,\mu]\left(\frac{\de F}{\de \mu (.)}\right)(z)
 \mu (dz)\\
 &=\left(A[t,\mu ]\left(\frac{\de F}{\de \mu (.)}\right),\mu\right).
\end{split}
\end{equation}

\begin{remark}
If the functionals $F$ are linear, i.e. of the form \eqref{linearF}: $F_t(\mu)=(f,\mu)$
for a $f\in C(\R^d)$, then the r.h.s of \eqref{Lambda} is the one of \eqref{eq1propkineticeqnonlMark}
(with $\mu_t=\mu$).
\end{remark}

According to Theorem \ref{thnonlsensitad3}, this family of operators generates propagator \eqref{eqproponkineq}
on the common invariant domains $C^{1, 2}_{\infty}(\M_{(\lambda_1,\lambda_2)})$ or $C^{2,2\times 2}_{\infty}(\M_{(\lambda_1,\lambda_2)})$.
Thus we need $F\in C^{1, 2}_{\infty}(\M_{(\lambda_1,\lambda_2)})$ to have $\Lambda_t F$ to be well defined and
$F\in C^{2,2\times 2}_{\infty}(\M_{(\lambda_1,\lambda_2)})$ to have an estimate of order $1/N$ for the deviation of $\Lambda_t F$
from $\widehat A_t^NF$.

Let $U^{s,t}_N$, $s\leq t$, denote the backward propagator on $C(\PC^N_{\delta}(\R^d))$ generated by \eqref{eq1Nparticlegen} and $\Phi^{s,t}$ is defined in \eqref{eqproponkineq}. From Proposition \ref{prop-propergatorProperty}, we have:
\begin{equation}
\label{U}
[(U^{s,t}_N- \Phi^{s,t})F] (\mu)
= \int_s^t [U^{s,r}_N(\widehat A_r^N-\Lambda_r)\Phi^{r,t} F] (\mu) \, ds,
\end{equation}
for
$F \in C^{2,2\times 2}_{\infty}(\M_{(\lambda_1,\lambda_2)})$ and $\mu \in \PC^N_{\delta}(\R^d)$.
By Theorem \ref{thnonlsensitad3} and Proposition \ref{propexplicitappropgen}, equation (\ref{U}) implies
\begin{equation}
\label{eqsecmanipulgen2a}
\sup_{\mu \in \PC^N_{\delta}(\R^d)} |[(U^{s,t}_N- \Phi^{s,t})F] (\mu)|
\le \frac{C(T)}{N} (t-s) \|F \|_{C^{2,2\times 2}_{\infty}(\M_{(\lambda_1,\lambda_2)})}
\end{equation}
for $0\le s\le t \le T$, with some constant $C(T)$ arising from the bounds on the propagators $\Phi^{s,t}$
on $C^{2,2\times 2}_{\infty}(\M_{(\lambda_1,\lambda_2)})$ from
Theorem \ref{thnonlsensitad3}.

This leads to the following result.
\begin{theorem}
\label{thLLNgenrate}
Under the assumptions of Theorem \ref{thnonlsensitad2},
suppose the initial conditions
\[
\mu_0^N=\frac1N (\de_{X_{1,0}^N}+\cdots +\de_{X_{N,0}^N})
\]
converge in $\D^*=(C^2_{\infty}(\R^d))^*$, as $N\to \infty$, to a probability law $\mu_0 \in \PC(\R^d)$ so that
\begin{equation}\label{estimatemu}
\|\mu_0^N -\mu_0\|_{\D^*}\le \frac{\kappa_1}{N}.
\end{equation}
with a constant $\kappa_1>0$. Then, for $t\in [0,T]$ with any $T\geq 0$,
\begin{equation}
\label{th1LLNgenrate}
|[U^{0,t}_N F](\mu_0^N)-[\Phi^{0,t}(F)](\mu_0)|
\le \frac{ C(T)}{N} (t\|F \|_{C^{2,2\times2}_{\infty}(\M_{(\lambda_1,\lambda_2)})}+\kappa_1)
\end{equation}
with a constant $C(T)$.
\end{theorem}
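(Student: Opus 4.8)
The plan is to decompose the quantity in \eqref{th1LLNgenrate} into a \emph{fixed--initial--condition propagator discrepancy} plus a \emph{sensitivity of the limiting functional to the initial datum}, and to control each term by a result already established. Since $\mu_0^N=\frac1N(\de_{X_{1,0}^N}+\cdots+\de_{X_{N,0}^N})\in \PC^N_{\delta}(\R^d)$, every object below is well defined, and I would write
\begin{equation*}
\begin{split}
[U^{0,t}_N F](\mu_0^N)-[\Phi^{0,t}(F)](\mu_0)
&=\big([U^{0,t}_N F](\mu_0^N)-[\Phi^{0,t}(F)](\mu_0^N)\big)\\
&\quad+\big([\Phi^{0,t}(F)](\mu_0^N)-[\Phi^{0,t}(F)](\mu_0)\big).
\end{split}
\end{equation*}

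The first bracket is exactly the discrepancy estimated in \eqref{eqsecmanipulgen2a} with $s=0$, evaluated at the particular point $\mu=\mu_0^N$; hence it is bounded by $\frac{C(T)}{N}\,t\,\|F\|_{C^{2,2\times 2}_{\infty}(\M_{(\lambda_1,\lambda_2)})}$. This is where the machinery of the section enters: Proposition \ref{propexplicitappropgen} supplies the $O(1/N)$ size of $\widehat A_t^N-\Lambda_t$ on functionals possessing a second variational derivative, while Theorem \ref{thnonlsensitad3}(ii) guarantees that $\Phi^{r,t}F$ remains in $C^{2,2\times 2}_{\infty}$ along the whole interval, so that Proposition \ref{prop-propergatorProperty} can be invoked as in the derivation of \eqref{eqsecmanipulgen2a}. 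I expect no difficulty here, as the estimate is simply \eqref{eqsecmanipulgen2a} specialised to a single initial measure.

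I would then treat the second bracket as a pure sensitivity-in-the-initial-measure estimate for the transported functional $\Phi^{0,t}(F)$. By Theorem \ref{thnonlsensitad3}(i), $\Phi^{0,t}(F)\in C^{1,2}_{\infty}(\M_{(\lambda_1,\lambda_2)})$, i.e.\ its first variational derivative $\delta\Phi^{0,t}(F)/\delta\mu(\cdot)$ is defined and lies in $\D=C^2_{\infty}(\R^d)$ uniformly in $\mu$, with norm bounded by $C(T)$ times the corresponding norm of $F$. Interpolating along the segment joining $\mu_0$ to $\mu_0^N$, which stays in $\PC(\R^d)\subset\M_{(\lambda_1,\lambda_2)}$ by convexity so that the derivative is legitimately evaluated there,
\begin{equation*}
[\Phi^{0,t}(F)](\mu_0^N)-[\Phi^{0,t}(F)](\mu_0)
=\int_0^1\Big(\frac{\delta\Phi^{0,t}(F)}{\delta\mu(\cdot)}\Big|_{\mu_0+\sigma(\mu_0^N-\mu_0)},\,\mu_0^N-\mu_0\Big)\,d\sigma,
\end{equation*}
and bounding the pairing by $\|\delta\Phi^{0,t}(F)/\delta\mu\|_{\D}\,\|\mu_0^N-\mu_0\|_{\D^*}$ yields, via \eqref{estimatemu}, a bound of order $\kappa_1/N$. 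The essential point — and the reason the sensitivity theory of Section \ref{secsmoothdepnonlinMark} was needed at all — is that the initial data are assumed to converge only in the weaker norm $\D^*=(C^2_{\infty})^*$, so the duality pairing can be closed precisely because $\delta\Phi^{0,t}(F)/\delta\mu$ lands in $\D=C^2_{\infty}$; convergence in $\B^*$ is neither available nor required.

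Combining the two brackets and absorbing the (time- and $F$-dependent) constants into a single $C(T)$ produces \eqref{th1LLNgenrate}. I anticipate no genuine obstacle at this final assembly: all the hard analysis — the explicit $1/N$ generator expansion of Proposition \ref{propexplicitappropgen} and, above all, the invariance of the variational-regularity classes $C^{1,2}_{\infty}$ and $C^{2,2\times 2}_{\infty}$ under the limiting flow from Theorem \ref{thnonlsensitad3} — is already in place. The only step demanding care is the bookkeeping of function spaces: one must keep the first bracket in the $C^{2,2\times 2}_{\infty}$ scale, which is what delivers the $1/N$ rate, while exploiting merely the $C^{1,2}_{\infty}$ regularity for the second, and one must check that the convex interpolation between $\mu_0$ and $\mu_0^N$ never leaves the domain $\M_{(\lambda_1,\lambda_2)}$ on which the variational derivatives are controlled.
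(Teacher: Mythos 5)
Your proposal is correct and follows essentially the same route as the paper: the identical two-term decomposition, with the first bracket controlled by \eqref{eqsecmanipulgen2a}. The only (immaterial) difference is in the second bracket, where the paper simply invokes the Lipschitz continuity in $\D^*$ of the solution flow from Theorem \ref{propkineticeqnonlMark} together with \eqref{estimatemu}, while you reach the same bound by interpolating with the first variational derivative of $\Phi^{0,t}(F)$ supplied by Theorem \ref{thnonlsensitad3}(i).
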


\begin{proof}
For
$F \in C^{2,2\times 2}_{\infty}(\M_{(\lambda_1,\lambda_2)})$, we have
 \begin{equation*}
 \begin{split}
&\left|[U^{0,t}_N (F)](\mu_0^N)-[\Phi^{0,t}(F)](\mu_0)\right|\\
\leq &\left|[U^{0,t}_N (F)-\Phi^{0,t}(F)](\mu_0^N)\right|+\left|[(\Phi^{0,t}(F))(\mu_0^N)-(\Phi^{0,t}(F))(\mu_0)]\right|.
\end{split}
\end{equation*}
Estimating the first term by \eqref{eqsecmanipulgen2a} and the second one by the Lipshitz continuity
of the solutions to kinetic equation obtained in Theorem \ref{propkineticeqnonlMark} and assumption (\ref{estimatemu}), yields
\eqref{th1LLNgenrate}.
\end{proof}

Let us turn now to full operator \eqref{eq0Nparticlegen} with $u$ and $\ga$ included. Assume
that all agents follow some fixed strategy $\ga(t,x)$, but for one player, say the 1st one, who applies
 a different control $u_{1,t}=\tilde \ga (t,x)$. Then, instead of the operator \eqref{eq1Nparticlegen}, the controlled process of $N$ interacting agents will be generated by the operator
 \begin{equation}
 \begin{split}
 \label{eq2Nparticlegen}
&\widehat{A}_t^N[\ga, \tilde \ga] F(\de_{\x}/N)
\\
=&\widehat{A}_t^N[\ga]  F(\de_{\x}/N)
+[A^1(t,\mu, \tilde \ga (t,.))-A^1(t,\mu, \ga (t,.))]F(\de_{\x}/N),
\end{split}
\end{equation}
where $\mu=\de_{\x}/N$, $F\in C^{2,2\times 2}_{\infty}(\M_{(\lambda_1,\lambda_2)})$ and
\[
\widehat{A}_t^N[\ga]  F(\de_{\x}/N)
=\sum_{i=1}^N A^i[t,\mu, \ga]f (x_1,\cdots , x_N).
\]

\begin{remark}
If the agents only control their drifts, as we usually assume here, i.e. generators $A^1[t,\mu,u]$ has the form of \eqref{Ai}, then
\begin{equation}
\begin{split}
&[A^1(t,\mu, \tilde \ga (t,.))-A^1(t,\mu, \ga (t,.))]F(\de_{\x}/N)\\
=&(h_1(t,\mu,\tilde \ga (t,.))-h_1(t,\mu, \ga (t,.)), \nabla F(\de_{\x}/N)).
\end{split}
\end{equation}
\end{remark}

It is clear that the result of the application of the last term
 of \eqref{eq2Nparticlegen} yields an expression of order $1/N$, allowing us to conclude that
\[
\widehat{A}_t^N[\ga, \tilde \ga] F(\de_{\x}/N)
=\widehat{A}_t^N[\ga]F(\de_{\x}/N)+O(1/N),
\]
where $O(1/N)$ is uniform for $F$ from bounded subsets
$C^{2,2\times 2}_{\infty}(\M_{(\lambda_1,\lambda_2)})$. Hence
$\widehat{A}_t^N[\ga, \tilde \ga] F$ and $\widehat{A}_t^N[\ga]F$ have the same limit
$\Lambda_t[\ga]$ defined in (\ref{Lambda}), and the same rate of
convergence of order $1/N$ holds for $F$ from bounded subsets
$C^{2,2\times 2}_{\infty}(\M_{(\lambda_1,\lambda_2)})$.
However, instead of just comparing the marginal distributions of Markov processes $\mu_t^N[\ga, \tilde \ga]$
specified by the propagators $U^{0,t}_N[\ga,\tilde \ga]$ (and generated
by the $\widehat{A}_t^N[\ga, \tilde \ga]$), as in Theorem \ref{thLLNgenrate}, we also need to be able to compare their integral characteristics
(interpreted as payoffs in a game theoretic setting).

The following result is a simplified preliminary version of Theorem \ref{thLLNgenratedevinttag} below.
\begin{theorem}
\label{thLLNgenratedevint}
Suppose the assumptions of Theorem \ref{thnonlsensitad2} hold for the family of operators
$A[t,\mu,\ga (t,.)]$ with
\begin{equation}
\label{fellergenerator 33}
\begin{split}
&A[t,\mu,u]f(z)=\frac{1}{2}(G(t,z,\mu,u)\nabla,\nabla)f(z)+ (b(t,z,\mu,u),\nabla f(z))\\
 &+\int (f(z+y)-f(z)-(\nabla f (z), y){\bf 1}_{B_1}(y))\nu (t,z,\mu,u,dy),
\end{split}
\end{equation}
and a class of functions $\ga: \R^+ \times \R^d \to \U$, that are continuous in the first variable and Lipschitz continuous in the second one.

Suppose the initial conditions
\[
\mu_0^N=\frac1N (\de_{X_{1,0}^N}+\cdots +\de_{X_{N,0}^N})
\]
converge in $\D^*=(C^2_{\infty}(\R^d))^*$, as $N\to \infty$, to a probability law $\mu_0 \in \PC(\R^d)$ so that
\begin{equation*}
\|\mu_0^N -\mu_0\|_{\D^*}\le \frac{\kappa_1}{N}
\end{equation*}
with a constant $\kappa_1>0$.

Let $U^{0,t}_N[\ga,\tilde \ga]$ denote the propagator generated by $\widehat{A}_t^N[\ga, \tilde \ga]$ and
$\Phi^{0,t}[\ga]$ the propagator generated by $\Lambda_t[\ga]$.

(i) Then, for $t\in [0,T]$ with any $T>0$,
\begin{equation}
\begin{split}
\label{eq1thLLNgenratedevint}
&\big|[U^{0,t}_N[\ga,\tilde \ga] F](\mu_0^N)-[\Phi^{0,t}[\ga](F)](\mu_0)\big|\\
&\le  \frac{C(T)}{N} \Big(t\|F \|_{C^{2,2\times 2}_{\infty}(\M_{(\lambda_1,\lambda_2)})}+\kappa_1\Big)
\end{split}
\end{equation}
with a constant $C(T)$ not dependent on $\tilde \ga$.

(ii) Moreover,
let $J(t,\mu)\in C([0,T], C^{2,2\times 2}_{\infty}(\M_{\lambda_1,\lambda_2}))$. Then
 \begin{equation}
 \begin{split}
\label{eq2thLLNgenratedevint}
&\left|\E \int_t^T J(s,\mu_s^N [\ga, \tilde \ga]) \, ds -\int_t^T J(s,\mu_s [\ga]) \, ds \right|\\
&\le \frac{C(T)}{N}\Big((T-t)\|J\|_{C([0,T], C^{2,2\times 2}_{\infty}(\M_{\lambda_1,\lambda_2})) }+\kappa_1\Big),
\end{split}
\end{equation}
where $\mu_t^N[\ga, \tilde \ga]$ is the Markov process
specified by the propagators $U^{0,t}_N[\ga,\tilde \ga]$ and $\mu_t [\ga]$ is the solution to kinetic
equation \eqref{eqkineqmeanfield} with the initial $\mu_0$.
\end{theorem}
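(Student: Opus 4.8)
The plan is to establish part~(i) as a controlled refinement of Theorem~\ref{thLLNgenrate}, and then to deduce part~(ii) from~(i) by integrating over the running time.

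For part~(i), note first that the discussion preceding the statement already shows that the single deviating player perturbs the generator only at order $1/N$: for $F$ from a bounded subset of $C^{2,2\times 2}_{\infty}(\M_{(\lambda_1,\lambda_2)})$ and $\mu=\de_{\x}/N$,
\begin{equation*}
\begin{split}
[A^1(t,\mu,\tilde\ga(t,.))&-A^1(t,\mu,\ga(t,.))]F(\de_{\x}/N)\\
&=\frac1N\Big(h_1(t,x_1,\mu,\tilde\ga(t,x_1))-h_1(t,x_1,\mu,\ga(t,x_1)),\,\nabla_z\frac{\de F}{\de\mu(z)}\Big|_{z=x_1}\Big),
\end{split}
\end{equation*}
the factor $1/N$ coming from $\nabla_{x_1}F(\de_{\x}/N)=\frac1N\nabla_z(\de F/\de\mu(z))|_{z=x_1}$. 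Combined with the $1/N$ estimate for $\widehat A_t^N[\ga]-\Lambda_t[\ga]$ furnished by Proposition~\ref{propexplicitappropgen}, this gives
\begin{equation*}
\sup_{\mu\in\PC^N_{\delta}(\R^d)}\big|(\widehat A_t^N[\ga,\tilde\ga]-\Lambda_t[\ga])F(\mu)\big|\le\frac{C(T)}{N}\|F\|_{C^{2,2\times 2}_{\infty}(\M_{(\lambda_1,\lambda_2)})},
\end{equation*}
with $C(T)$ independent of $\tilde\ga$. I would then repeat the argument of Theorem~\ref{thLLNgenrate} verbatim: inserting this bound into the propagator identity \eqref{U} (Proposition~\ref{prop-propergatorProperty}, with $U_N[\ga,\tilde\ga]$ a contraction on $C(\PC^N_{\delta}(\R^d))$ and $\Phi[\ga]$ bounded on $C^{2,2\times 2}_{\infty}$ by Theorem~\ref{thnonlsensitad3}) yields $|[(U^{0,t}_N[\ga,\tilde\ga]-\Phi^{0,t}[\ga])F](\mu_0^N)|\le \frac{C(T)}{N}t\|F\|_{C^{2,2\times 2}_{\infty}}$, while the initial-data discrepancy $|[\Phi^{0,t}[\ga]F](\mu_0^N)-[\Phi^{0,t}[\ga]F](\mu_0)|$ is bounded by $C(T)\kappa_1/N$ using the Lipschitz continuity of the kinetic flow (Theorem~\ref{propkineticeqnonlMark}) together with \eqref{estimatemu}. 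Adding the two contributions gives \eqref{eq1thLLNgenratedevint}.

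For part~(ii), the point is that the integrand can be read off from part~(i) pointwise in $s$. Fixing $s\in[t,T]$ and applying the Markov propagator to the frozen observable $J(s,\cdot)$, I have $\E\,J(s,\mu_s^N[\ga,\tilde\ga])=[U^{0,s}_N[\ga,\tilde\ga]J(s,\cdot)](\mu_0^N)$, whereas the definition \eqref{eqproponkineq} of $\Phi$ gives $J(s,\mu_s[\ga])=J(s,\mu_s(\mu_0))=[\Phi^{0,s}[\ga](J(s,\cdot))](\mu_0)$. Hence $\E J(s,\mu_s^N)-J(s,\mu_s)$ is exactly the quantity estimated in part~(i) with $F=J(s,\cdot)$ and terminal time $s$, so it is bounded by $\frac{C(T)}{N}(s\|J(s,\cdot)\|_{C^{2,2\times 2}_{\infty}}+\kappa_1)$. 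Integrating in $s$ over $[t,T]$, using $\|J(s,\cdot)\|_{C^{2,2\times 2}_{\infty}}\le\|J\|_{C([0,T],C^{2,2\times 2}_{\infty}(\M_{(\lambda_1,\lambda_2)}))}$ together with $\int_t^T s\,ds\le T(T-t)$ and $\int_t^T ds=(T-t)\le T$, and absorbing the extra factor $T$ into the constant, I arrive at \eqref{eq2thLLNgenratedevint}.

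The step I expect to be the main obstacle is guaranteeing that the constant $C(T)$ in part~(i) is genuinely independent of $\tilde\ga$: this uniformity is precisely what makes the estimate usable for the tagged-particle argument of Section~\ref{secepequilibrium}, where one player deviates. It is secured by the two facts just used, namely that the deviation enters through a single coordinate and carries a factor $1/N$ after passing to the variational-derivative representation (so the generator bound does not see $\tilde\ga$ beyond the uniform bound on the drift $h_1$), and that $U_N[\ga,\tilde\ga]$ is a contraction on $C(\PC^N_{\delta}(\R^d))$ for every $\tilde\ga$. A secondary technical point is that part~(i) must be applied with one and the same constant for all $s\in[t,T]$, which is exactly ensured by the hypothesis $J\in C([0,T],C^{2,2\times 2}_{\infty}(\M_{(\lambda_1,\lambda_2)}))$ keeping $J(s,\cdot)$ in a fixed bounded set.
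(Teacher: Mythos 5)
Your proposal is correct and follows essentially the same route as the paper: part (i) is obtained exactly as in Theorem \ref{thLLNgenrate} after observing that the single deviating player contributes only an $O(1/N)$ perturbation to the generator (the paper's remark after \eqref{eq2Nparticlegen} makes the same point via $\nabla F(\de_{\x}/N)$), and part (ii) follows by applying (i) pointwise in $s$ and integrating. The only cosmetic difference is that the paper phrases the integration step in (ii) via Riemann sums and term-by-term application of \eqref{eq1thLLNgenratedevint}, whereas you integrate the frozen-observable estimate directly; these are equivalent, and your explicit identification $\E\,J(s,\mu_s^N[\ga,\tilde\ga])=[U^{0,s}_N[\ga,\tilde\ga]J(s,\cdot)](\mu_0^N)$ together with the uniformity of $C(T)$ in $\tilde\ga$ is exactly what the paper's brief proof relies on.
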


\begin{proof}
(i) This follows from \eqref{eq2Nparticlegen} by the same argument as in Theorem \ref{thLLNgenrate}.
(ii) Let us represent the integral $\int_0^t J(s,\mu_s^N [\ga, \tilde \ga]) \, ds$ as the limit of
Riemannian sums. Then \eqref{eq2thLLNgenratedevint} is obtained by applying \eqref{eq1thLLNgenratedevint} term-by-term and passing to the limit.
\end{proof}

\section{Mean field limit as an $\epsilon$-equilibrium}
\label{secepequilibrium}

In this section, we complete our program by performing task T4), that is by showing
that a solution to equation \eqref{eqkineqmeanfieldnonM} provides a perfect $\ep$- equilibrium for
the corresponding approximating game of $N$ players, with $\ep$ of order $1/N$. For simplicity, we consider one class of agents. Extensions to multi-class agents, using Theorem \ref{thfeedbackHJB}, is straightforward.

Let us recall, that for $\ep >0$, a strategy profile $\Gamma$ in a game of $N$ players with payoffs
$V_i(\Gamma)$, $i=1,\cdots, N$, is a {\it $\ep$-equilibrium} (or $\ep$-Nash equilibrium) if, for each player $i$ and an eligible strategy $u_i$,
\[
V_i(\Gamma)\geq V_i(\Gamma_{-i}, u_i)-\epsilon,
\]
where $(\Gamma_{-i}, u_i)$ denotes the profile obtained from $\Gamma$ by substituting the strategy of player $i$ with $u_i$. A profile of dynamic strategies in a dynamic game on a  time interval $[0,T]$ is called a {\it perfect $\ep$- equilibrium}, if it is an $\ep$-equilibrium for any subgame started any time $t\in [0,T]$
(see a systematics presentation of this notion in \cite{KolMal1} or any textbook on game theory,
original papers on perfect $\ep$-equilibria being \cite{R1980} and \cite{FL1986}).

To complete our program, we need to extend the estimate \eqref{eq2thLLNgenratedevint} to the case of $J$ depending
on the position of a tagged agent and its own strategy. To this end, let us look at the process of pairs
$(X_{1,t}^N, \mu_t^N)$, that is on a chosen tagged agent and the overall mass.

Let $C^{2;2,2\times 2}_{\infty}(\R^{d}\times\M_{(\lambda_1,\lambda_2)})$ denote the subspace of functionals $F(x,\mu)$ from $C_{\infty}(\R^{d}\times\M_{(\lambda_1,\lambda_2)})$, the space of continuous bounded functionals on $\R^d\times \M_{(\lambda_1,\lambda_2)}$,  such that for each $\mu\in \M_{(\lambda_1,\lambda_2)}$, the functional $F(.,\mu)\in C^2_\infty(\R^d)$ and for each $x\in\R^d$, $F(x,.)\in C^{2,2\times 2}_{\infty}(\M_{(\lambda_1,\lambda_2)})$.

Let $C([0,T]\times \U, C^{2;2,2\times 2}_{\infty}(\R^{d}\times\M_{(\lambda_1,\lambda_2)}))$ denote the subspace of functionals $F(t,x,\mu, u)$ from $C_{\infty}([0,T]\times\R^{d}\times\M_{(\lambda_1,\lambda_2)}\times \U)$, the space of continuous bounded functionals on $[0,T]\times\R^{d}\times\M_{(\lambda_1,\lambda_2)}\times \U$,  such that for each $(t,\mu,u)$, the functional $F(t,.,\mu,u)\in C^2_\infty(\R^d)$ and for each $(t,x,.,u)$, $F(t,x,.,u)\in C^{2,2\times 2}_{\infty}(\M_{(\lambda_1,\lambda_2)})$.

The generators of the process of pairs $(X_{1,t}^N, \mu_t^N)$
are defined on the space $C^{2;2,2\times 2}_{\infty}(\R^{d}\times\M_{(\lambda_1,\lambda_2)})$ and take the form
  \begin{equation}
 \label{eqNparticleplusdev}
\widehat A_{t;tag}^N[\ga, \tilde \ga] F(x_1,\de_{\x}/N):
=\left(A^1[t, \de_{\x}/N, \tilde \ga]+\widehat{A}_t^N[\ga, \tilde \ga]\right)F(x_1,\de_{\x}/N),
\end{equation}
where the 1st (resp. 2nd) operator in the sum acts on the 1st (resp. 2nd) variable of $F$ ($A^1$ just means the operator $A$ acting on the variable $x_1$).
Consequently, by equation (\ref{eq2Nparticlegen}) and Proposition \ref{propexplicitappropgen},
   \begin{equation}
 \label{eqNparticleplusdevrate}
\widehat A_{t; tag}^N[\ga, \tilde \ga] F(x_1,\de_{\x}/N)
=\left(A^1[t, \de_{\x}/N, \tilde \ga]+\La_t[\ga]\right)F(x_1,\de_{\x}/N)+O(1/N).
\end{equation}

\begin{theorem}
\label{thLLNgenratedevinttag}
Under the assumption of Theorem \ref{thLLNgenratedevint},
let $ U^{0,t}_{N;tag}[\ga,\tilde \ga]$ denote the propagator generated by $\widehat A_{t; tag}^N[\ga, \tilde \ga]$ and
$ \Phi^{0,t}_{tag}[\ga]$ the propagator generated by the family
\[
A^1[t, \mu, \tilde \ga]+\La_t[\ga]
\]
where $\mu=\de_{\x}/N$. Suppose additionally $X_{1,0}^N\in\R^d$ converge, as $N\to \infty$, to a point $X_{1,0}\in\R^d$ such that
\begin{equation}\label{estimatex}
||  X^N_{1,0}-X_{1,0}||\leq \frac{\kappa_2}{N}
\end{equation}
with a constant $\kappa_2>0$.

(i) Then, for $t\in [0,T]$ with any $T>0$,
\begin{equation}
\begin{split}
\label{eq1thLLNgenratedevint2}
&\Big|[U^{0,t}_{N;tag}[\ga,\tilde \ga] F](X^N_{1,0},\mu_0^N)-[\Phi^{0,t}_{tag}[\ga](F)](X_{1,0},\mu_0)\Big|\\
&\le  \frac{C(T)}{N} \Big((t+k_2)\|F \|_{C^{2;2,2\times 2}_{\infty}(\R^{d}\times\M_{(\lambda_1,\lambda_2)})}+\kappa_1\Big)
\end{split}
\end{equation}
with a constant $C(T)$ not dependent on $\tilde \ga$.

(ii) Moreover, if $J(t,x,\mu,u)\in C([0,T]\times \U, C^{2;2,2\times 2}_{\infty}(\R^{d}\times\M_{(\lambda_1,\lambda_2)})) $, then
 \begin{equation}
 \begin{split}
\label{eq2thLLNgenratedevint2}
&\left|\E \int_t^T J(s,X_{1,s}^N,\mu_s^N [\ga, \tilde \ga], \tilde \ga (s,X_{1,s}^N)) \, ds
 -\E\int_t^T J(s, X_{1,s}, \mu_s [\ga], \ga (s,X_{1,s})) \, ds \right|\\
 &\le \frac{C}{N}\Big((T+\kappa_2)\|J \|_{C([0,T]\times \U, C^{2;2,2\times 2}_{\infty}(\R^{d}\times\M_{(\lambda_1,\lambda_2)}))}+\kappa_1\Big)
 \end{split}
\end{equation}
where the pair $(X_{1,s}^N, \mu_t^N[\ga, \tilde \ga])$ is the Markov process
specified by the propagators $U^{0,t}_{N;tag}[\ga,\tilde \ga]$ and $\mu_s [\ga]$ is the solution to kinetic
equation \eqref{eqkineqmeanfield} with the initial $\mu_0$ and the Markov process $X_{1,s}$ is generated by $A^1[t,\mu_t[\ga],\tilde \ga]$.
\end{theorem}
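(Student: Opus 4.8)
The plan is to realize the pair $(X^N_{1,t},\mu^N_t)$ as a single Markov process on $\R^d\times\PC^N_\delta(\R^d)$ and to compare its propagator with the mean-field limit by the same Duhamel device used for Theorems \ref{thLLNgenrate} and \ref{thLLNgenratedevint}. Concretely, I would apply Proposition \ref{prop-propergatorProperty} to the backward propagators $U^{0,t}_{N;tag}[\ga,\tilde\ga]$ and $\Phi^{0,t}_{tag}[\ga]$, generated by $\widehat A^N_{t;tag}[\ga,\tilde\ga]$ and by $A^1[t,\mu,\tilde\ga]+\La_t[\ga]$ respectively, to obtain
\[
\big(U^{0,t}_{N;tag}[\ga,\tilde\ga]-\Phi^{0,t}_{tag}[\ga]\big)F=\int_0^t U^{0,s}_{N;tag}[\ga,\tilde\ga]\,\big(\widehat A^N_{s;tag}[\ga,\tilde\ga]-A^1[s,\mu,\tilde\ga]-\La_s[\ga]\big)\,\Phi^{s,t}_{tag}[\ga]F\,ds.
\]
The crucial simplification is that the tagged generator $A^1$ occurs in both families and cancels in the integrand, so that by the expansion \eqref{eqNparticleplusdevrate} (itself a consequence of Proposition \ref{propexplicitappropgen}) the operator under the integral is $O(1/N)$, and this $1/N$ discrepancy lives entirely in the measure variable, being controlled by the second variational $\mu$-derivatives of $\Phi^{s,t}_{tag}[\ga]F$ while $x_1$ merely plays the role of a bounded parameter.

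The key step, and the main obstacle, is to show that $\Phi^{0,t}_{tag}[\ga]$ preserves the product domain $C^{2;2,2\times2}_{\infty}(\R^d\times\M_{(\lambda_1,\lambda_2)})$ with uniformly bounded norm. Here I would exploit that the measure component evolves autonomously, so that $\Phi^{0,t}_{tag}[\ga]$ is precisely the propagator of the tagged (tangent) process of Remark \ref{tangentprocess}: one first runs the deterministic kinetic flow $\mu_s(\mu)$ governed by $\La_s[\ga]$, and then, along it, the time-inhomogeneous Markov process with generator $A^1[s,\mu_s(\mu),\tilde\ga]$ issued from $x$. Regularity in $x$ up to second order follows from $C^2_\infty(\R^d)$ being an invariant core for the L\'evy--Khintchine generator $A^1$; regularity in $\mu$ in the sense of $C^{2,2\times2}_\infty(\M_{(\lambda_1,\lambda_2)})$ follows from Theorem \ref{thnonlsensitad3}, now applied with $x$ as a frozen parameter. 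The genuinely new contributions are the mixed derivatives: since $A^1[s,\mu_s,\tilde\ga]$ itself depends on the measure flow, differentiating in $\mu_0$ produces, besides the expected $\xi_s$- and $\eta_s$-terms, extra terms in which $A^1$ is differentiated in $\mu$ and integrated against the tagged transition kernel. One must verify that these remain in $C^2_\infty(\R^d)$ as functions of $x$ and bounded in $C^{2,2\times2}_\infty(\M_{(\lambda_1,\lambda_2)})$; this is exactly where the joint regularity hypotheses inherited from Theorem \ref{thnonlsensitad2} are used, and the product structure $C^{2;2,2\times2}_\infty$ is the natural invariant core precisely because $A^1$ and $\La_t$ act on different variables.

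Granting the invariant-domain estimate, part (i) is completed by splitting the left-hand side of \eqref{eq1thLLNgenratedevint2} into the propagator-difference term, evaluated at $(X^N_{1,0},\mu^N_0)$ and bounded by $t\,C(T)\|F\|/N$ via the display above, and the initial-data term $[\Phi^{0,t}_{tag}[\ga]F](X^N_{1,0},\mu^N_0)-[\Phi^{0,t}_{tag}[\ga]F](X_{1,0},\mu_0)$, which is controlled through the Lipschitz dependence of $\Phi^{0,t}_{tag}[\ga]F$ on its two arguments together with the rates \eqref{estimatex} and \eqref{estimatemu}, yielding the $\kappa_2/N$ and $\kappa_1/N$ contributions and hence the factor $(t+\kappa_2)$. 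Part (ii) then follows from (i) exactly as Theorem \ref{thLLNgenratedevint}(ii) followed from its part (i): one writes $\int_t^T J\,ds$ as a limit of Riemann sums, applies \eqref{eq1thLLNgenratedevint2} to each summand with the time-frozen functional $(x,\mu)\mapsto J(s,x,\mu,\tilde\ga(s,x))$, and passes to the limit, the constant being independent of $\tilde\ga$. The one point requiring care at this stage is that this composed functional must still belong to $C^{2;2,2\times2}_\infty(\R^d\times\M_{(\lambda_1,\lambda_2)})$, which is where the assumed regularity of $J$ in $(x,\mu)$ and of $\tilde\ga$ in the spatial variable enters; everything else is a bookkeeping repetition of the already-established one-variable arguments, so I expect no difficulty beyond the mixed-derivative estimate flagged above.
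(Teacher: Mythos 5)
Your proposal follows essentially the same route as the paper: a Duhamel comparison of the two propagators via Proposition \ref{prop-propergatorProperty} using the $O(1/N)$ generator discrepancy \eqref{eqNparticleplusdevrate}, a triangle-inequality split to absorb the initial-data errors \eqref{estimatex} and \eqref{estimatemu} through Lipschitz dependence on initial conditions, and Riemann sums for part (ii). The only difference is that you spell out the invariance of $C^{2;2,2\times 2}_{\infty}(\R^{d}\times\M_{(\lambda_1,\lambda_2)})$ under $\Phi^{s,t}_{tag}[\ga]$ (via the tangent-process structure and Theorem \ref{thnonlsensitad3}), a step the paper leaves implicit when it cites \eqref{eqsecmanipulgen2a}; this is a welcome clarification, not a deviation.
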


\begin{proof}
(i) For $F\in C^{2;2,2\times 2}_{\infty}(\R^{d},\M_{(\lambda_1,\lambda_2)})$, we have
\begin{equation}
 \begin{split}
&\Big|[U^{0,t}_{N;tag}[\gamma, \hat \gamma]F](X_{1,0}^N, \mu_0^N)-[\Phi^{0,t}_{tag}[\gamma]F](X_{1,0},\mu_0)\Big|\\
\leq & \Big| [U^{0,t}_{N;tag}[\gamma, \hat \gamma]F](X_{1,0}^N, \mu_0^N)-[\Phi^{0,t}_{tag}[\gamma]F](X_{1,0}^N, \mu_0^N) \Big|\\
&+\Big| [\Phi^{0,t}_{tag}[\gamma]F](X_{1,0}^N, \mu_0^N)-[\Phi^{0,t}_{tag}[\gamma]F](X_{1,0},\mu_0^N) \Big|\\
&+\Big|[\Phi^{0,t}_{tag}[\gamma]F](X_{1,0}, \mu_0^N)-[\Phi^{0,t}_{tag}[\gamma]F](X_{1,0},\mu_0) \Big|.
 \end{split}
\end{equation}
Estimating the first term by \eqref{eqsecmanipulgen2a}, the second one by assumption (\ref{estimatex}) and the third one by the assumption \eqref{estimatemu} and the Lipschitz continuity
of the solutions to kinetic equation obtained in Theorem \ref{propkineticeqnonlMark}, yields
\eqref{eq1thLLNgenratedevint2}.

(ii) Let us represent both integrals $\int_t^T J(s,X_{1,s}^N,\mu_s^N [\ga, \tilde \ga], \tilde \ga (s,X_{1,s}^N)) \, ds$ and $\int_t^T J(s, X_{1,s}, \mu_s [\ga], \tilde \ga (s,X_{1,s})) \, ds$
as the limits of
Riemannian sums. Then \eqref{eq2thLLNgenratedevint2} is obtained by applying \eqref{eq1thLLNgenratedevint2} term-by-term and passing to the limit.
\end{proof}

\begin{remark}
\label{remtaggedtangentprocess}
Theorem \ref{thLLNgenratedevinttag} remains valid if the initial points $X_{1,0}^N$ are random and their distributions
converge to a certain law $\nu$. If $\tilde \ga=\ga$, the limiting process $X_{1,t}$ is the tangent Markov process to the nonlinear process $\mu_t$, see Remark \ref{tangentprocess}.
\end{remark}

Theorem \ref{thLLNgenratedevinttag} and all results of Sections \ref{secsmoothdepnonlinMark} and \ref{Convergence}
had nothing to do with optimization (and with the theory of HJB equation of Section \ref{Sensitivity}). They belong
to statistics of interacting particles and are of interest in their own right. The results hold for all examples given in Section \ref{examples} (see Theorem \ref{propmainmultiagentexamples}).

Recall that we are working with Banach spaces $\B=C_\infty(\R^d)$ and $\D=C_\infty^2(\R^d)$. Combining these results with the theory of HJB equation leads to our main result.
\begin{theorem}
\label{thmeanfieldasequilibrium}
(i)Suppose $\{A[t,\mu,u]: t\geq 0, \mu\in \M_\lambda, \lambda\in (\lambda_1,\lambda_2),u\in\U\}$ is a family of linear operators $\D\to C_\infty(\R^d)$  of the form
\begin{equation}
A[t,\mu,u]f(z)=(h(t,z,\mu,u), \nabla f(z)) + L[t,\mu] f(z)
\end{equation}
where $h\in C([0,T]\times \U, C^{2;2,2\times 2}_{\infty}(\R^{d}\times\M_{(\lambda_1,\lambda_2)}))$ and operators $L[t,\mu]$ are of L\'evy-Khintchin form i.e.
\begin{equation}
\begin{split}
&L[t,\mu]f(z)=\frac{1}{2}(G(t,z,\mu)\nabla,\nabla)f(z)+ (b(t,z,\mu),\nabla f(z))\\
 &+\int_{\R^d} (f(z+y)-f(z)-(\nabla f (z), y){\bf 1}_{B_1}(y))\nu (t,z,\mu,dy)
\end{split}
\end{equation}
and satisfy the assumptions in Theorem \ref{thLLNgenratedevint} and estimate \eqref{EBDD} for a $p\in(0,2]$.

(ii) Let time-dependent Hamiltonian $H_t$ be given by
\begin{equation}\label{UP}
H_t(x,p, \mu):=\max_{u\in\U} \{h(t,x,\mu,u)p+J(t,x,\mu,u)\}
\end{equation}
where functions $J\in C([0,T]\times \U, C^{2;2,2\times 2}_{\infty}(\R^{d}\times\M_{(\lambda_1,\lambda_2)}))$. Assume the argmax in \eqref{UP} is unique and it is continuous in $t$ and Lipschitz continuous in $(x,p,\mu)$, uniformly with respect to $t,x,\mu$ ($\mu$ in the topology of $\D^*$) and bounded $p$ (see e.g. examples at the end of Subsection \ref{subescdiscrete}  for the natural settings where this condition holds).

(iii) Assume $\delta H_t/\delta \mu (\cdot)$ exists and is continuous and uniformly bounded.

(iv) Assume for any $\{\mu.\} \in C_\mu([0,T]\times\M_{\lambda})$ for each $\lambda\in(\lambda_1,\lambda_2)$, the operator curve $L[t,\mu_t]:\D\rightarrow C_{\infty}(\mathbf{R}^d)$ generates a strongly continuous backward propagator of bounded linear operators $U^{t,s}[\{\mu.\}]$ in the Banach space $C_{\infty}(\mathbf{R}^d)$ with the common invariant domain $\D$, such that,
for any $[\{\mu.\}]$, and $t\leq s$
\begin{equation*}
		\max \{||U^{t,s}[\{\mu.\}]||_{\D},||U^{t,s}[\{\mu.\}]||_{C_\infty}, ||U^{t,s}[\{\mu.\}]||_{C^1_\infty}\}\leq \textbf{K}
	\end{equation*}
for some constant $\textbf{K}> 0$, and with their dual propagators $\tilde{U}^{s,t}[\{\mu.\}]$ preserving the set $\M_{\lambda}$.

(v) Assume the operators $U^{t,s}[\{\mu.\}]: C_{\infty}(\mathbf{R}^d)\mapsto C^1_{\infty}(\mathbf{R}^d)$, $0 \leq t < s$, and
\begin{equation*}
\|{U}^{t,s}[\{\mu.\}]\phi\|_{C_{\infty}^1(\mathbf{R}^d)} \leq w(s-t)\|\phi\|_{C_{\infty}(\mathbf{R}^d)},
\end{equation*}
for $t<s<T$ and for all $\phi\in C_{\infty}(\mathbf{R}^d)$, with an integrable positive function $w$ on $[0,T]$.

(vi) Assume terminal function $V^T\in C\infty^2(\R^d)$.

Then,

(i) for any initial measure $\mu_0$ with a finite moment of $p$th order, $p\in (0,2]$, there exists a solution $\{\mu.\}$ to equation  \eqref{eqkineqmeanfieldnonM} ;

(ii) if the initial conditions of a $N$ players game
\[
\mu_0^N=\frac1N (\de_{X_{1,0}^N}+\cdots +\de_{X_{N,0}^N})
\]
converge in $\D^*=(C^2_{\infty}(\R^d))^*$, as $N\to \infty$, to a probability law $\mu_0 \in \PC(\R^d)$ so that
\begin{equation*}
\|\mu_0^N -\mu_0\|_{\D^*}\le \frac{\kappa_1}{N}
\end{equation*}
with a constant $\kappa_1>0$ and $X_{1,0}^N\in\R^d$ converge, as $N\to \infty$, to a point $X_{1,0}\in\R^d$ such that
\begin{equation*}
||  X^N_{1,0}-X_{1,0}||\leq \frac{\kappa_2}{N}
\end{equation*}
with a constant $\kappa_2>0$, the strategy profile $u=\Gamma(t,x,\{\mu.\})$, defined via HJB (\ref{HJB 2}) and (\ref{HJB 2a}) with $\{\hat{\mu}.\}=\{\mu.\}$, is a perfect $\epsilon$-equilibrium in a $N$ players game, with
\[
\epsilon=\frac{C(T)}{N}( ||J||_{C([0,T]\times \U, C^{2;2,2\times 2}_{\infty}(\R^{d}\times\M_{\lambda_1,\lambda_2}))}+||V^T||_{C_\infty^{2}(\R^{d})}+1 ).
\]
\end{theorem}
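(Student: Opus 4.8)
The plan is to assemble the theorem from the three pillars built above: global existence for the coupled forecasting kinetic equation, well-posedness and $\mu$-sensitivity of the HJB feedback, and the tagged-particle law of large numbers with $1/N$ rate. For statement (i), I would first observe that hypotheses (ii)--(vi) are exactly the inputs required to run Theorems \ref{weak_existence}, \ref{Sensitivity 2} and \ref{thfeedbackHJB}: the smoothing property (v), the propagator bounds (iv), the Lipschitz argmax (ii) and the $\mu$-differentiability in (iii) give a well-posed mild solution $V$ of the HJB system \eqref{HJB 2}--\eqref{HJB 2a} whose gradient depends Lipschitz-continuously on the frozen flow $\{\mu.\}$ in the $\D^*$-topology. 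Feeding this into Theorem \ref{thfeedbackHJB} yields the feedback regularity, i.e. that $u=\Gamma(t,x,\{\mu_{\ge t}\})$ is uniformly Lipschitz in $\{\mu.\}$. This is precisely assumption (i) of Theorem \ref{thkineticeq} applied to $A[t,\{\mu.\}]=A[t,\mu_t,\Gamma(t,\cdot,\{\mu_{\ge t}\})]$; assumption (ii) of Theorem \ref{thkineticeq} is supplied directly by (iv); and the moment bound \eqref{EBDD} delivers, through Proposition \ref{propmomentcomp}, the compactness hypothesis of Theorem \ref{thglobalexistgenkin}. A Schauder fixed-point argument (Theorem \ref{thglobalexistgenkin}) then produces a solution $\{\mu.\}$ of the coupled equation \eqref{eqkineqmeanfieldnonM} for any $\mu_0$ with finite $p$th moment.

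For statement (ii), I would fix this limiting flow $\{\mu.\}$ and the induced feedback $\gamma(t,x)=\Gamma(t,x,\{\mu_{\ge t}\})$, then let one tagged agent (say agent $1$) deviate to an admissible feedback $\tilde\gamma$ while the other $N-1$ agents keep $\gamma$. The key structural fact, already encoded in \eqref{eq2Nparticlegen}--\eqref{eqNparticleplusdevrate}, is that a unilateral deviation perturbs the generator only by an $O(1/N)$ term, so the propagators $U^{0,t}_{N;tag}[\gamma,\tilde\gamma]$ share the common limit generated by $A^1[t,\mu_t,\tilde\gamma]+\Lambda_t[\gamma]$, in which the mean field still evolves by $\mu_t[\gamma]$ \emph{irrespective of} $\tilde\gamma$. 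Since $J$ and $V^T$ lie in the regularity classes assumed, I would then apply Theorem \ref{thLLNgenratedevinttag}: part (ii) controls the running payoff and part (i), used with $F(x,\mu)=V^T(x)$, controls the terminal payoff, so that the tagged agent's total payoff in the $N$-player game converges, at rate $1/N$, to the payoff $\E[\int_t^T J(s,X_{1,s},\mu_s[\gamma],\tilde\gamma)\,ds+V^T(X_{1,T})]$ of a single controlled process $X_{1,\cdot}$ driven by $A^1[t,\mu_t[\gamma],\tilde\gamma]$ against the frozen flow $\mu_\cdot[\gamma]$.

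The final step identifies this limiting quantity with a standard optimal control problem whose value function is exactly $V$ from HJB \eqref{HJB 2} and whose optimal feedback is $\Gamma$ (the mild/verification optimality recorded after Theorem \ref{weak_existence}), so that the limiting deviation payoff satisfies $V_1^{\lim}(\tilde\gamma)\le V_1^{\lim}(\gamma)$. Chaining the three bounds --- deviating $N$-payoff $\le$ limiting deviating payoff $+C/N$, limiting deviating payoff $\le$ limiting payoff under $\gamma$, and limiting payoff under $\gamma\le$ non-deviating $N$-payoff $+C/N$ --- yields $V_1^N(\gamma_{-1},\tilde\gamma)\le V_1^N(\gamma)+\epsilon$ with $\epsilon=C(T)N^{-1}(\|J\|+\|V^T\|+1)$, the $\|J\|$ and $\|V^T\|$ terms coming from the bounds in Theorem \ref{thLLNgenratedevinttag}. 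Running the identical argument on every subgame $[t_0,T]$, using the uniformity of the constants in Theorem \ref{thLLNgenratedevinttag} and the subgame-perfect structure of dynamic programming, upgrades this to a \emph{perfect} $\epsilon$-equilibrium.

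I expect the main obstacle to be the clean identification of the limiting control problem together with the optimality inequality $V_1^{\lim}(\tilde\gamma)\le V_1^{\lim}(\gamma)$ for \emph{all} admissible deviations rather than merely smooth ones: one must confirm that the mild solution $V$ is genuinely the value function of the frozen-flow problem (through the viscosity/verification theory cited after Theorem \ref{weak_existence}) and that the class of eligible deviations $\tilde\gamma$ remains within the regularity for which Theorem \ref{thLLNgenratedevinttag} applies. Reassuringly, the measure-theoretic heart of the matter --- that a single deviation cannot move the mean field to leading order --- is already delivered by the tagged-particle estimates, so the remaining work is essentially bookkeeping of the three inequalities and their uniformity in the subgame starting time.
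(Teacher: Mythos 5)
Your proposal follows essentially the same route as the paper's proof: part (i) via Theorem \ref{thfeedbackHJB} feeding into Theorem \ref{thglobalexistgenkin} (with Proposition \ref{propmomentcomp} supplying compactness), and part (ii) via the tagged-particle estimates of Theorem \ref{thLLNgenratedevinttag} combined with optimality of $\Gamma$ in the frozen-flow limit. If anything, your explicit three-inequality chaining (deviating $N$-payoff vs.\ limiting deviating payoff vs.\ limiting equilibrium payoff vs.\ non-deviating $N$-payoff) spells out the one-sided Nash inequality more carefully than the paper's terse two-line estimate, which leaves the middle optimality step implicit.
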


\proof
(i) This follows from Theorem \ref{thfeedbackHJB} and Theorem \ref{thglobalexistgenkin}.

(ii) Let the 1st player choose a different strategy $\tilde{\gamma}$, other than $\Gamma$. Denote the state dynamics of the 1st player by $X_{1,t}^N$ or $\tilde{X}_{1,t}^N$, if defined via equation (\ref{eqkineqmeanfield}) with $\gamma =\Gamma$ or $\gamma = \tilde{\gamma}$, respectively. Then,
\begin{equation*}
\begin{split}
&|V^1(0,X_{1,0}^N)[\Gamma]-V^1(0,\tilde{X}_{1,0}^N)[\Gamma_{-1}, \tilde{\gamma}]|\\
\leq & \left|\E \int_0^T  J_1(s,X_{1,s}^N,\mu_s[\Gamma], \Gamma)ds-\E\int_0^T  J_1(s,\tilde{X}_{1,s}^N,\mu_s[\Gamma_{-1},\tilde{\gamma}], \tilde{\gamma})ds\right|\\
&+ \left|\E V^T(X_{1,T}^N)-\E V^T(\tilde{X}_{1,T}^N) \right|.
\end{split}
\end{equation*}
By Theorem \ref{thLLNgenratedevinttag},
\begin{equation*}
\begin{split}
&|V^1(0,X_{1,0}^N)[\Gamma]-V^1(0,\tilde{X}_{1,0}^N)[\Gamma_{-1}, \tilde{\gamma}]|\\
\leq & \frac{C(T)}{N}\Big (||J||_{C([0,T]\times \U, C^{2;2,2\times 2}_{\infty}(\R^{d},\M_{(\lambda_1,\lambda_2)}))}+||V^T||_{C_\infty^{2}(\R^{d})}+1 \Big).
\end{split}
\end{equation*}
It is clear these estimates hold if we start the game at any time $t\in[0,T]$.
\qed

\section{Bibliographical notes}
\label{secconclud}

Below is a very short sketch (far from being complete) of relevant literature on LLN for interacting particles, kinetic equations, propagation of chaos, mean field and nonlinear Markov control.

The work on deterministic LLN limits for interacting particles and its representation in terms of nonlinear kinetic equations goes back to \cite{Leon}, \cite{Bogo}. The main approach of physicists was via the so-called Bogoliubov or BBGKY hierarchies. Similar deduction of kinetic equations
for very general system of mean-field and $k$th order interacting systems can be found in
\cite{BeM}, \cite{MaTa} and \cite{BeK3}. Quantum analogs of these equations were developed in \cite{Be88}.
The related concept of the propagation of chaos was developed by many authors, see e.g.  \cite{Kac}, \cite{Sz91}.
Especially much work was conducted for interacting diffusions and the related McKean-Vlasov nonlinear diffusion processes, see e.g. \cite{McK1} and various recent developments in \cite{YIB1},\cite{YIB2}, \cite{GMN2006},
 \cite{BogRS} and references therein.

The method of tagged particle is well established in the theory of interaction, see e.g. \cite{Fer}, \cite{Gri},
\cite{Olla}, \cite{Osa}, \cite{Pia} and references therein. However, our treatment of this method in conjunction
 with the theory of infinite-dimensional semigroups in a control setting seems to be new.

 The sensitivity analysis for kinetic equations started to be developed recently, see \cite{Ko06a}, \cite{Bai}, \cite{ManNorrisBai}, \cite{Ko10}, \cite{Ko11}, leading both to the rates of convergence in numeric algorithms
 and in theoretical result on the convergence in the dynamic LLN and the propagation of chaos. The crucial
 (especially in the context of the present paper) $1/N$-estimates appear in general setting of interacting particles in \cite{Ko07} for diffusion and stable-like processes with bounded coefficients and then was extended to a wide class of interaction with unbounded rates in \cite{Ko10}.
 Convergence rates are actively studied now, see e.g. \cite{CepFou}, \cite{Ko08a} and references therein. The algorithmical aspects of particle approximations are widely studied from various points of view, see e.g. \cite{Cris}, \cite{KSW}, \cite{DelM}, \cite{Jou} and references therein.

For the introduction to nonlinear Markov evolutions from the physical point of view we refer to papers \cite{F2008} or \cite{Z2002}. Related aspects of nonlinearity in interacting particles are discussed in \cite{Ma04} and \cite{OXZ}.

For systematic discussions of the crucial question touched upon in Section \ref{examples} on the rigorous link between generators and processes, we refer to books \cite{Ja} and \cite{VK2} and papers \cite{JS01}, \cite{Ko08a}, and references therein. Specifically for stable like processes see \cite{BC}, \cite{Barlow}, \cite{Uemura}, \cite{Ko00}
and references therein. For related questions see \cite{BogBC}, \cite{BBC}.

The seminal papers on mean-field games were cited above many times. Some further discussions can be found in video lectures \cite{L}. As a nice recent contribution, let us mention \cite{B2011}, which is devoted to a simple model with an underlying process being a standard Brownian motion, where explicit solutions and various limiting regimes including long time asymptotic are given. Discrete Mean Field Games were studied in \cite{CMS2010}, numerical methods
in \cite{AC2010}, \cite{LST2010}.   Let us stress finally  that the concept
 of the mean field games lies in the general domain of mean field and nonlinear Markov control, see
 e.g. \cite{AnDj}, \cite{Buck}, \cite{SuVv}, Section 11.2 of book \cite{Ko10} and references therein for its quick mathematical development, with more application oriented work being  \cite{LeBou}, \cite{GaGa}, \cite{Borde},
\cite{BeLe}, \cite{BeWe} (with applications from robot swamps to transportation theory and networks).

\end{document}